\definecolor{beamerblue}{RGB}{51,51,178}
\definecolor{targetcolour}{RGB}{51,51,178}
\definecolor{sourcecolour}{RGB}{250, 65, 65}
\definecolor{mixed}{RGB}{255,0,0}
\definecolor{amaranth}{rgb}{0.9, 0.17, 0.31}
\definecolor{amber}{RGB}{255, 148,0}
\definecolor{amber2}{rgb}{1.0, 0.49, 0.0}
\titleformat{\chapter}[hang]
{\huge\bfseries}
{\thechapter}
{1pc}
{\huge}
\titlespacing{\chapter}{0pt}{0pt}{30pt}
\setlist[enumerate]{nosep}
\theoremstyle{plain}
\newtheorem{theorem}{Theorem}[chapter]
\newtheorem{corollary}{Corollary}[theorem]
\newtheorem{lemma}[theorem]{Lemma}
\newtheorem{proposition}[theorem]{Proposition}
\newtheorem*{theorem*}{Theorem}
\newtheorem{definition}[theorem]{Definition}
\theoremstyle{definition}
\newtheorem{example}[theorem]{Example}
\newtheorem{problem}{Problem}
\newtheorem{assumption}{Assumption}
\newtheorem{remark}[theorem]{Remark}
\newcommand{\R}{\mathbb{R}}
\newcommand{\N}{\mathbb{N}}
\newcommand{\K}{\mathcal{K}}
\newcommand{\C}{\mathcal{C}}
\newcommand{\X}{\mathcal{X}}
\newcommand{\Y}{\mathcal{Y}}
\newcommand{\F}{\mathcal{F}}
\newcommand{\di}{\mathrm{d}}
\newcommand{\E}{\mathcal{E}}
\DeclareMathOperator{\Int}{int}
\DeclareMathOperator{\ri}{ri}
\DeclareMathOperator{\aff}{aff}
\DeclareMathOperator{\conv}{conv}
\DeclareMathOperator{\id}{Id}
\DeclareMathOperator{\dom}{dom}
\DeclareMathOperator{\var}{Var}
\DeclareMathOperator{\supp}{Supp}
\DeclareMathOperator{\diam}{Diam}
\DeclareMathOperator*{\argmin}{arg\,\min}
\DeclareMathOperator*{\argmax}{arg\max}
\DeclareMathOperator{\lag}{Lag}
\setlist[enumerate]{nosep}
\def\XXint#1#2#3{{\setbox0=\hbox{$#1{#2#3}{\int}$ }
\vcenter{\hbox{$#2#3$ }}\kern-.6\wd0}}
\tikzset{cross/.style={cross out, draw=black, minimum size=2*(#1-\pgflinewidth), inner sep=0pt, outer sep=0pt},
cross/.default={1pt}}
\newcounter{step}
\begin{document}

\frontmatter
    
\thispagestyle{empty}
\begin{center}
\begin{minipage}{\linewidth}
    \centering
    \includegraphics[width=0.8\linewidth]{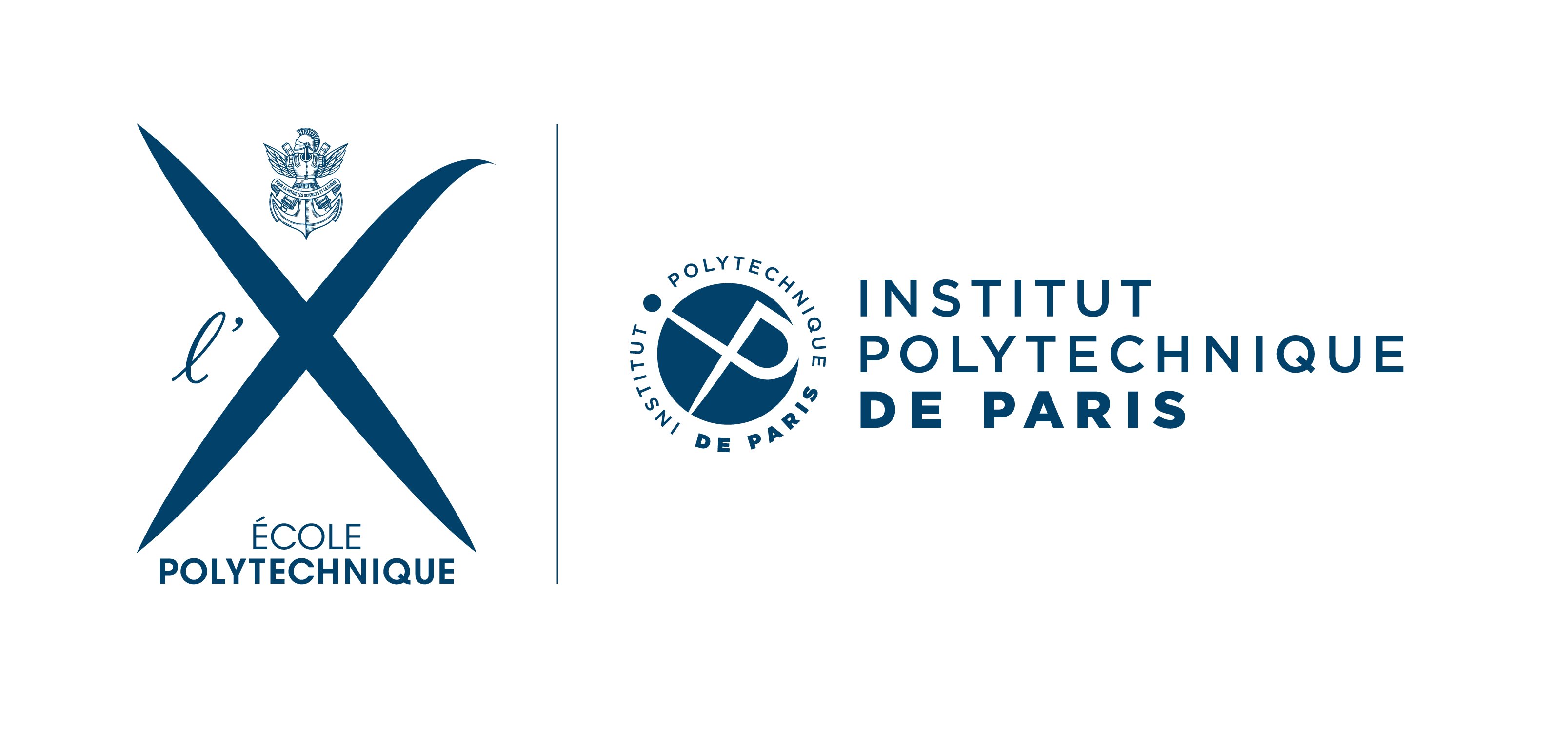}
    \vspace{3cm}


\Large{IP Paris M2 Optimisation Internship, completed at the Centre de Mathématiques Appliquées de l'École Polytechnique}\\
    
\rule{\linewidth}{0.3mm}\\
\vspace{0.3cm}
{\textbf{\LARGE Quantitative Stability in Discrete Optimal Transport}\par}
\rule{\linewidth}{0.3mm}
\vspace{0cm}

\textit{Author:}
\hfill
\textit{Directors:}\\
William Ford 
\hfill
Michael Goldman et Cyril Letrouit\\

\vspace{6cm}
{\Large September 18, 2025}
\end{minipage}
\end{center}

\clearpage
\vfill

\newpage
\vspace*{0.5\textheight}
\textit{To my grandma Josie, the kindest woman I have ever known.}
\newpage

{\textbf{\huge Acknowledgements}}\\

\textit{Tout d'abord, je présente mes remerciements à mes encadrants, Michael Goldman et Cyril Letrouit. Ça a été une plaisir de travailler avec vous pendant cette année, et j'ai hâte de continuer ce voyage avec vous durant mon doctorat.}

\textit{Thanks to my parents, for their love, support and counsel, and who always pushed me to achieve my best. I do not comment on who gave me my ``maths gene". Thank you to Toby, with whom I first really fell in love with mathematics through our countless discussions.}

\textit{I also want to thank all my friends who made my first year in Paris during this M2 so enjoyable. In particular, I would like to thank Aryna, Enzo, Ester, Francesca, Guido, Jacob, Jacopo, Jules, Michele, Pablo, Petra, Raphaël, Timur, Thomas, and Yegor. I probably would have done a lot more maths but had much less fun if it weren't for all of you.}

\vfill

\vspace*{\fill}

\begin{spacing}{-0.3}
    \parskip 0.7mm
    \tableofcontents
\end{spacing}

\mainmatter

\chapter{Introduction}
\label{ch: intro}

\section{Overview}

The optimal transport problem provides a rigorous framework for comparing probability measures, and has become a central tool in modern analysis and probability, as well as having found wide applications ranging from fluid mechanics and economics to data science and imaging. This report is concerned with the stability properties of optimal transport. While qualitative stability is well understood, quantitative estimates remain a subject of active research. Such estimates are crucial in applied contexts where data are inevitably discretised or noisy. The structure of the report is as follows:

In Chapter \ref{ch: intro}, we provide a brief summary of the optimal transport theory, then provide some motivation for the quantitative stability problem. Notably, we emphasise the necessity of studying the stability of \textit{optimal plans} rather than maps. We formulate the main problem, providing some preliminary results in this direction. 

In Chapter \ref{ch: ac stability}, we review some of the current literature on quantitative stability with a source fixed absolutely continuous, as well as providing some extensions of a certain result to a larger class of measures. 

In Chapter \ref{ch: discrete transport problem}, we shift towards the main theme of this report. Our approach is to begin to study the fully discrete setting, where the transport problem admits a linear programming formulation with a rich combinatorial structure.

In Chapter \ref{ch: pertubations}, we provide a (partial) analysis of quantitative stability properties of discrete optimal plans under perturbations.

In Chapter \ref{ch: uniqueness of dual problem}, we use the intuition given by the discrete transport problem to provide a sharper understanding of optimal transport in the general (non-discrete) case, significantly refining known uniqueness criteria for Kantorovich potentials (optimisers of the dual problem).

\section{Background on optimal transport}

Let $\rho$ and $\mu$ be two probability measures supported inside $\X, \Y \subset \R^d$. Let $c : \X \times \Y \to \R_+$  be lower semi-continuous and integrable in the sense that there exist $a \in L^1(\rho), b \in L^1(\mu)$ such that $c(x, y) \leq a(x) + b(y)$ pointwise. The optimal transport problem is the minimisation problem
\begin{equation}
\label{intro: c OT problem}
    \min_{\gamma \in \Pi(\rho, \mu)} \int_{\X \times \Y} c(x,y) \di \gamma(x, y),
\end{equation}
where $\Pi(\rho, \mu)$ is the set of probabilities over $\R^d \times \R^d$ with first marginal $\rho$ and second marginal $\mu$. Under our hypotheses on $c$, the existence of solutions to \eqref{intro: c OT problem} is classical, see \cite[Theorem 1.3]{villani2021topics}. As a linear optimisation problem with convex constraints, problem \eqref{intro: c OT problem} admits the following dual formulation
\begin{equation}
\label{intro: c dual problem}
    \max_{\substack{\phi, \psi \\ \phi \oplus \psi \leq c}} \int_\X \phi(x) \di \rho(x) + \int_\Y \psi(y) \di \mu(y).
\end{equation}
Here the supremum is taken over all $\phi \in L^1(\X, \rho)$ and $\psi \in L^1(\Y, \mu)$ such that 
\begin{equation}
    \label{intro: dual constraint}
    \phi(x) + \psi(y) \leq c(x, y) \text{  for all  } (x,y) \in \X \times \Y.
\end{equation}
By a density argument (See Lusin's theorem \cite[Theorem 2.24]{rudin1987real} and its corollary), the value of \eqref{intro: c dual problem} is equal to if we took the supremum only over $\phi \in C_b(\X)$ and $\psi \in C_b(\Y)$. Under our hypotheses on $c$ we have existence of optimisers for \eqref{intro: c dual problem} in $L^1(\rho), L^1(\mu)$ (but not necessarily $C_b$), see \cite[Theorem 5.10]{villani2008optimal}. Given $\phi, \psi$ satisfying \eqref{intro: dual constraint}, we define their $c$-transforms as 
\begin{equation}
\label{intro: c transf definition}
    \phi^c(y) := \inf_{x \in \X} c(x, y) - \phi(x) ; \quad\quad \psi^c(x):= \inf_{y \in \Y} c(x, y) - \psi(y).
\end{equation}
Functions which can be written as the $c$-transform of some function are referred to as $c$-\textit{concave}.
Take any $(\phi, \psi)$ admissible for \eqref{intro: c dual problem}, then
\begin{equation}
\label{intro:pointwise-ctransf-inequality}
\phi(x) \leq \psi^c(x) \quad \forall x \in \X
\end{equation}
pointwise as a consequence of \eqref{intro: dual constraint}. Hence, the pair $(\psi^c, \psi)$ always performs at least as well while still respecting the constraint. Applying another $c$-transform yields $(\psi^c, (\psi^c)^c)$ which also performs no worse. However, this process cannot be iterated indefinitely, as $((\psi^c)^c)^c = \psi^c$. Now take any $(\phi, \psi)$ optimal for \eqref{intro: c dual problem}. Taking a $c$ transform cannot increase the value of the objective in \eqref{intro: c dual problem}, thus
\begin{equation}
\label{intro:equality of c trans optims}
    \int_\X \phi(x) \di \rho(x) = \int_\X \psi^c(x) \di \rho(x).
\end{equation}
Combining \eqref{intro:pointwise-ctransf-inequality} and \eqref{intro:equality of c trans optims}, we deduce $\phi = \psi^c$  $\rho$-a.e. and hence without loss of generality, we can always choose representatives of any optimal potentials which are $c$-concave. The dual problem \eqref{intro: c dual problem} is also often reformulated as the following so-called \textit{semi-dual} problem
\begin{equation}
\label{intro: c semidual problem}
    \sup_{\psi \in C_b(\Y)} \int_\X \psi^c(x) \di \rho(x) + \int_\Y \psi(y) \di \mu(y).
\end{equation}

Convex duality (said to be ``Kantorovich Duality" in this case) tells us that the values of the primal and dual problems \eqref{intro: c OT problem} and \eqref{intro: c dual problem} are equal. Furthermore, for any $\gamma \in \Pi(\rho, \mu)$ optimal for \eqref{intro: c OT problem} and any pair $(\phi, \phi^c)$ optimal for \eqref{intro: c dual problem},
\begin{equation}
    \label{intro: compatibility condition}
    \supp \gamma \subset \left\{ (x, y) \in \X \times \Y : \phi(x) + \phi^c(y) = c(x, y) \right\}.
\end{equation}
The set on the right-hand side is referred to as the graph of the $c$-superdifferential of $\phi$, denoted $\partial^c \phi$, with a symmetric definition given for $\psi$. The $c$-superdifferential can equivalently be characterised by $y \in \partial^c\phi(x)$ if and only if
\begin{equation}
\label{eq: c superdiff}
    c(x, y) - \phi(x) \leq c(z, y) - \phi(z) \quad \forall z \in \X.
\end{equation}
Furthermore, if for some admissible $\gamma$ and $(\phi, \psi)$, \eqref{intro: compatibility condition} holds, both are optimal for their respective problems.

A set $W \subset \X \times \Y$ is said to be $c$-cyclically monotone if for every finite sequence of points $(x_1, y_1),... (x_n, y_n) \in W$ and any permutation $\sigma$ of $\{1,..,n\}$,
\begin{equation}
\label{intro: c cm definition statment}
    \sum_{i = 1}^n c(x_i, y_i) \leq \sum_{i = 1} c(x_i, y_{\sigma(i)})
\end{equation}
When the cost is continuous $\gamma \in \Pi(\rho, \mu)$ is optimal for \eqref{intro: c OT problem} if and only if $\supp \gamma$ is $c$-cyclically monotone, see \cite[Corollary 1]{schachermayer2009characterization}.

In the specific case $c(x, y) = \| x - y\|^p$ for some $p>0$ and $\X = \Y = \R^d$, the $p$-th root of the value of the optimal transport problem between $\rho$ and $\mu$ defines the $p$-Wasserstein distance
\begin{equation*}
    W_p(\rho, \mu) := \left( \inf_{\gamma \in \Pi(\rho, \mu)} \int_{\R^d \times \R^d} \|x-y\|^p \di \gamma(x, y)\right)^{1/p}.
\end{equation*}
When $p \geq1$, $W_p$ defines a metric on the space of probability measures with bounded $p$th moment. When $p = 2$, observing that
\begin{equation}
   \int_{\X \times \Y} \|x-y\|^2 \di \gamma =  \int_{\X} \|x\|^2 \di \rho + \int_{\X} \|y\|^2 \di \mu - 2 \int_{\X \times \Y} \langle x | y \rangle \di \gamma.  
\end{equation}
The first two terms are independent of the choice of $\gamma \in \Pi(\rho, \mu)$ and so \eqref{intro: c OT problem} is equivalent to maximising the correlation $\int \langle x | y \rangle \di \gamma$, or using the cost $c(x, y) = - \langle x | y \rangle$. For this problem, Kantorovich duality reads
\begin{equation}
\label{intro: max correlation and its dual}
    \max_{\gamma \in \Pi(\rho, \mu)} \int_{\X \times \Y} \langle x | y \rangle \di \gamma = \min_{\psi \in \C^0(\Y)} \int_\X \psi^* \di \rho + \int_\Y \psi\di \mu,
\end{equation}
where here the $c(x, y)= - \langle x | y \rangle $ transform acts as convex conjugation, defined for $\psi \in \C^0(\Y)$ by
\begin{equation}
\label{eq: legendre transform defn}
    \psi^*(x) = \sup_{y \in \Y} \langle x | y \rangle - \psi(y).
\end{equation}
This is equivalent to extending $\psi$ by $+ \infty$ outside of $\Y$, then taking the usual Legendre transform with supremum over all of $\R^d$. Dual potentials $\varphi$ for the cost $c(x, y) = \frac{1}{2}\|x-y\|^2$ and dual potentials $\phi$ for $c(x, y) = -\langle x, y \rangle$ are thus in bijection, related by
\begin{equation*}
    \phi(x) = \frac{1}{2}\|x\|^2 - \varphi(x).
\end{equation*}
In the case that $\rho$ is absolutely continuous and both measures have bounded second moment, a theorem of Y. Brenier \cite{brenier1991polar} asserts that the optimal transport plan $\gamma \in \Pi(\rho, \mu)$ is unique and induced by the map $T = \nabla \psi^*$, where $\psi$ is a solution to the dual problem on the right hand side of \eqref{intro: max correlation and its dual}. Here, induced means that $\gamma = (\id, T)_\#\rho$. Such an optimal $\psi^*$ is henceforth referred to as a Brenier potential. If we assume further that  $\supp\rho$ is the closure of a connected open set and $\supp \mu$ is bounded, the potentials $\psi, \psi^*$ are respectively $\mu$-a.e./$\rho$-a.e. uniquely defined up to a constant\cite[Proposition 7.18]{santambrogio2015optimal}. We present sharper conditions for uniqueness in Chapter \ref{ch: uniqueness of dual problem}.

\section{Motivation for quantitative stability}

Stability in optimal transport is important for applications, as well as being significant from a purely mathematical viewpoint. \textit{Qualitative} stability in optimal transport is well understood, established as a consequence of compactness arguments. In particular, from \cite[Section 1.6.4]{santambrogio2015optimal}
\begin{theorem}
\label{intro: qualitative stability}
    Let $\X$ and $\Y$ be compact metric spaces, let $\rho_n \in \mathcal{P}(\X)$ and $\mu_n \in \mathcal{P}(\Y)$ be sequences of probability measures converging weakly (in duality with continuous bounded functions) to limits  $ \rho_n \rightharpoonup \rho$ and $\mu_n \rightharpoonup \mu$. Then, up to a subsequence
    \begin{itemize}
        \item Optimal plans $\gamma_n$ between $\rho_n$ and $\mu_n$ converge weakly to an optimal plan between $\rho$ and $\mu$.
        \item If they exist, for $\rho_n \equiv \rho$ fixed, optimal maps $T_n$ from $\rho$ to $\mu_n$ converge strongly in $L^2(\rho)$ to the optimal map $T$ between $\rho$ and $\mu$.
        \item Optimal dual potentials between $\rho_n$ and $\mu_n$ converge uniformly to an optimal pair for $\rho$ and $\mu$.
        \item $W_p(\rho_n, \mu_n) \to W_p(\rho, \mu)$.
    \end{itemize}
\end{theorem}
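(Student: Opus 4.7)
I would address the four items in the order stated; each builds on the previous, with item (1) the fundamental compactness statement. For (1), since $\X \times \Y$ is compact the family $\Pi(\rho_n, \mu_n)$ is tight, so Prokhorov's theorem yields a subsequence $\gamma_{n_k} \rightharpoonup \gamma$. Testing against tensor functions $f \otimes 1$ and $1 \otimes g$ with $f \in C(\X)$, $g \in C(\Y)$, combined with $\rho_n \rightharpoonup \rho$ and $\mu_n \rightharpoonup \mu$, shows $\gamma \in \Pi(\rho, \mu)$. Weak lower semi-continuity of $\gamma \mapsto \int c \, d\gamma$ (since $c$ is l.s.c.\ and bounded below on the compact product) gives $\int c \, d\gamma \leq \liminf \int c \, d\gamma_n$. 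For the matching upper bound I would use that each $\supp \gamma_n$ is $c$-cyclically monotone; since $\supp \gamma \subset \limsup_n \supp \gamma_n$ is an immediate consequence of weak convergence, $\supp \gamma$ is itself $c$-cyclically monotone, and the characterisation of \cite[Corollary 1]{schachermayer2009characterization} gives optimality of $\gamma$. (For merely l.s.c.\ $c$, one instead constructs a recovery sequence $\tilde\gamma_n \in \Pi(\rho_n, \mu_n)$ weakly converging to any competitor $\tilde\gamma \in \Pi(\rho, \mu)$ with convergent cost, via a disintegration and gluing argument.)

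Item (4) is then immediate: for $c(x,y) = \|x-y\|^p$, continuous and bounded on the compact product, $\gamma_n \rightharpoonup \gamma$ yields $\int c \, d\gamma_n \to \int c \, d\gamma$, i.e.\ $W_p^p(\rho_n, \mu_n) \to W_p^p(\rho, \mu)$. For (3), I would exploit that any $c$-concave function inherits the uniform modulus of continuity of $c$ on the compact $\X \times \Y$, so optimal dual potentials $\phi_n$, normalised by $\phi_n(x_0) = 0$ at some fixed $x_0$, form an equicontinuous, equibounded family. Arzelà–Ascoli supplies a uniformly convergent subsequence with limit $\phi_\infty$ (and $\phi_n^c \to \phi_\infty^c$ uniformly on $\Y$), and passing to the limit in the Kantorovich duality \eqref{intro: c dual problem} using weak convergence of the marginals identifies $\phi_\infty$ as dual-optimal for $(\rho, \mu)$. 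For (2), uniqueness of the Brenier plan $\gamma = (\id, T)_\#\rho$ upgrades the subsequential conclusion of (1) to convergence along the full sequence; testing $\gamma_n$ against $(x,y) \mapsto \varphi(x) y_i$ for $\varphi \in C(\X)$ gives weak $L^2(\rho)$ convergence $T_n \rightharpoonup T$, and the norm identity $\|T_n\|_{L^2(\rho)}^2 = \int \|y\|^2 \, d\mu_n \to \int \|y\|^2 \, d\mu = \|T\|_{L^2(\rho)}^2$, valid since $y \mapsto \|y\|^2$ is continuous and bounded on compact $\Y$, upgrades this to strong $L^2(\rho)$ convergence.

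The main obstacle will be the optimality step in item (1). The cyclical-monotonicity shortcut is very clean when $c$ is continuous, which covers the Wasserstein case; otherwise, a gluing construction of a recovery sequence is needed, and ensuring both marginals match $\rho_n$ and $\mu_n$ simultaneously requires care. Everything else reduces to standard compactness (Prokhorov, Arzelà–Ascoli) or direct verification once item (1) is in place.
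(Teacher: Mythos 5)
The paper does not give its own proof of this theorem; it cites it verbatim from \cite[Section 1.6.4]{santambrogio2015optimal}. Your proposal is correct and follows essentially the standard compactness arguments one finds in that reference: Prokhorov tightness plus passage of $c$-cyclical monotonicity to the limit for plans, Arzel\`a--Ascoli with the modulus of continuity inherited from $c$ for potentials, and weak $L^2(\rho)$ convergence of $T_n$ (from testing against $\varphi(x)y_i$) upgraded to strong convergence by the norm identity $\|T_n\|_{L^2(\rho)}^2 = \int \|y\|^2\, d\mu_n$, together with uniqueness of the limit plan to obtain full-sequence convergence.
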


The question of \textit{quantitative} stability in optimal transport has been the subject of numerous recent works. The case where one measure is absolutely continuous has received a lot of interest recently, for example \cite{gigli2011holder, delalande2023quantitative, letrouit2024gluing, kitagawa2025stability}. Here, the source measure $\rho$ is fixed as an absolutely continuous measure with support $\X \subset \R^d$, and the target measure $\mu$ is perturbed. The authors derive results of the form
\begin{equation}
\label{intro: example map stability statement}
    \|T_{\rho \to \mu_0} - T_{\rho \to \mu_1} \|_{L^2(\rho)} \leq C_{\rho, \Y} W_1(\mu_0, \mu_1)^{q} \text{ for all } \mu_0, \mu_1 \in \mathcal{P}(\Y),
\end{equation}
for some $q \leq 1$. Here $T_{\rho \to \mu_i}$ represents the unique quadratic optimal transport map from $\rho$ to $\mu_i$. The best exponent currently proven here is $q=1/6$ in \cite{delalande2023quantitative}. The following example illustrates that we cannot hope for better than $q = 1/2$ in general.

\begin{example}\textnormal{\cite[Lemma 5.2]{delalandethesis}}
\label{ex: delalande disc example}
    Let $\rho(x) = \frac{1}{\pi}\chi_{B_1}(x)$ as the uniform probability measure on the unit disc in $\R^2$. Let $x_\theta = (\sin \theta, \cos \theta)$, then define $\mu_\theta = \frac{1}{2}\delta_{x_\theta}+ \frac{1}{2}\delta_{x_{\theta+ \pi}}$. In this case, the optimal transport between $\rho$ and $\mu_\theta$ is given by
\begin{equation*}
    T_{\rho \to \mu_\theta}(x) = \begin{cases}
        x_\theta & \text{ if } \langle x | x_\theta \rangle \geq 0\\
        x_{\theta + \pi} & \text{ if } \langle x | x_{\theta} \rangle < 0
    \end{cases}
\end{equation*}
    see Figure \ref{fig:delalande disc example}.
    \begin{figure}[ht]
        \centering
        \includegraphics[width=0.3\linewidth]{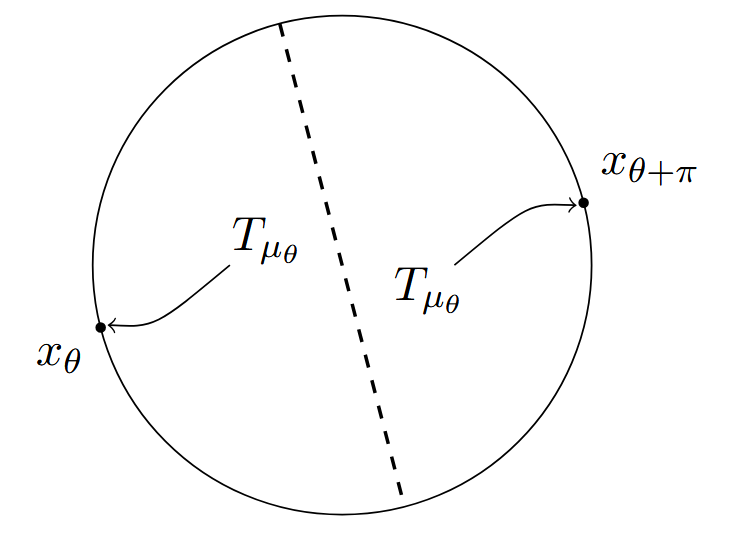}
        \caption{The optimal map $T_{\rho \to \mu_\theta}$. Taken from \cite[Figure 1]{letrouit2025lectures}.}
        \label{fig:delalande disc example}
    \end{figure}
    We can calculate explicitly the quantities involved. For small $\theta$, 
    \begin{equation*}
        W_2(\mu_0, \mu_\theta) = 2\sin(\theta/2) \sim |\theta|
    \end{equation*}
    for any $p \geq 1$. Also,
    \begin{equation*}
        \|T_{\rho \to \mu_{\theta}} - T_{\rho \to \mu_{0}} \|_{L^2(\rho)}^2 = \left(1-\frac{|\theta|}{\pi}\right) |2\sin(\theta/2)|^2 + \frac{|\theta|}{\pi} |2 \sin((\pi-\theta)/2)|^2 \sim \frac{4|\theta|}{\pi}.
    \end{equation*}
    Hence, we cannot hope for better than $q=1/2$ in \eqref{intro: example map stability statement}.
\end{example}

Current proof techniques actually do not establish \eqref{intro: example map stability statement} directly. Instead, they first establish a similar statement for optimisers of the dual problem \eqref{intro: c dual problem}, of the form
\begin{equation}
\label{intro: example potential stability statement}
    \|\phi_{\rho \to \mu_0} - \phi_{\rho \to \mu_1} \|_{L^2(\rho)} \leq C_{\rho, \Y} W_1(\mu_0, \mu_1)^{q} \text{ for all } \mu_0, \mu_1 \in \mathcal{P}(\Y).
\end{equation}
Here $\phi_{\rho \to \mu_i}$ denotes the Brenier potential between $\rho$ and $\mu_i$, satisfying ($\int \phi \di \rho = 0)$. This constraint is imposed to guarantee the uniqueness of potentials (they are unique up to a constant here without this additional constraint). We give a slightly more detailed overview of this theory in Chapter \ref{ch: ac stability}, for an in depth exposition we refer the reader to \cite{letrouit2025lectures}.

\section{Stability of transport plans}

Stability beyond when one measure is absolutely continuous has received comparatively little study. \cite{li2021quantitative} treats stability in both marginals in a neighbourhood of measures whose optimal map between them is Lipschitz, slightly extending the result of \cite{gigli2011holder}. Such regularity assumptions on optimal maps are hard to verify in applications. Moreover, they are often not true - Brenier's theorem does not apply, so in many cases we may not even have the existence of optimal maps, let alone regular ones. It is usually more reasonable to have some regularity assumptions on one of the measures.

\begin{problem}
\label{problem: general plan stability}
Let $\rho$ satisfy some regularity assumptions, for example, of the form
\begin{enumerate}[label = (\roman*)]
    \item \label{intro: enum smooth hypoth} \textit{Continuous}: $\rho$ has density comparable to Lebesgue measure of a bounded compact set.

    \item \label{intro: enum rigid hypoth} \textit{Discrete}: $\rho$ is a fully discrete measure, given by the projection of an absolutely continuous measure onto a uniform grid mesh placed over its support.
    
    \item \label{intro: enum rough hypoth} \textit{Rough}: $\rho$ is comparable to Lebesgue measure ``up to resolution $\varepsilon$" in the sense of \cite[Assumption 1]{jabin2023h}, in that there exist constants $0<\lambda \leq \Lambda$ such that for any ball $B_r \subset \supp \rho$ with $r \geq \varepsilon$, we have
    \begin{equation*}
        \lambda|B_r| \leq \rho(B_r) \leq \Lambda|B_r|.
    \end{equation*}
\end{enumerate}
For what constants $C$ and exponents $q$ are the optimal transport plans stable in the following sense: There exists a ``small" error $\mathcal{E}$ depending on \ref{intro: enum smooth hypoth} the datum of $\rho$ \ref{intro: enum rigid hypoth} the mesh grid size or \ref{intro: enum rough hypoth} the resolution $\varepsilon$, such that
\begin{equation}
\label{intro: eq target plan statement}
    W_p(\gamma_{\rho \to \mu_0}, \gamma_{\rho \to \mu_1}) \leq C W_p(\mu_0, \mu_1)^q + \mathcal{E}?
\end{equation}
Note that on the left-hand side, $W_p$ is taken with respect to the Euclidean distance on the product space $\R^d \times \R^d$, while those on the right-hand side are for the Euclidean distance on $\R^d$. We do not distinguish between these cases in notation, as it is clear from the context. 
\end{problem}

In case \ref{intro: enum rough hypoth}, $\mathcal{E}$ should depend on $\varepsilon$ in a way which vanishes as $\varepsilon \to 0$. Understanding the sharp behaviour in this case should simultaneously explain \ref{intro: enum rigid hypoth}, and this in turn should induce a result for \ref{intro: enum smooth hypoth} as a consequence of qualitative stability arguments. It is also possible that a thorough study of \ref{intro: enum rigid hypoth} could lead to \ref{intro: enum rough hypoth} by qualitative limits. The advantage of studying the discrete formulation is that the Laguerre cells induced on the target domain by choosing a dual potential for $\rho$ have a very nice structure generally here, as will be further discussed in Chapter \ref{ch: discrete transport problem}. One can also perturb both marginals at the same time, with an additional $W_p(\rho_0, \rho_1)$ term on the right-hand side of \eqref{intro: eq target plan statement}. 

We make three immediate observations regarding the stability of optimal plans. The first is that a form of ``reverse stability" of plans always holds.
\begin{proposition}
\label{prop:reverse stability}
    Let $p \in (1, \infty) $. Let $\rho_0$, $\rho_1$, $\mu_0$ and $\mu_1$ be probabilities on $\R^d$ with finite $p$-moments. Let $\gamma_i \in \Pi(\rho_i, \mu_i)$ be optimal transport plans between their marginals for the $p$-cost. Then
    \begin{equation*}
        W_p^p(\gamma_0, \gamma_1) \geq c_p(W_p^p(\rho_0, \rho_1) + W_p^p(\mu_0, \mu_1)).
    \end{equation*}
where $c_p = \min(1, 2^{p/2-1})$.
\end{proposition}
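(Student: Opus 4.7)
The plan is to reduce the statement to a pointwise inequality on $\R^d \times \R^d$ and then integrate against an arbitrary coupling of $\gamma_0$ and $\gamma_1$, taking the infimum at the end. Optimality of the $\gamma_i$ is actually not required; the inequality holds for any $\gamma_i \in \Pi(\rho_i, \mu_i)$.

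\textbf{Step 1 (pointwise inequality).} The key observation is that the Euclidean norm on the product space satisfies $\|(x,y)\|^2 = \|x\|^2 + \|y\|^2$, so the question reduces to comparing $(a+b)^{p/2}$ with $a^{p/2} + b^{p/2}$ for $a, b \ge 0$. If $p \ge 2$, then $r := p/2 \ge 1$ and superadditivity of $t \mapsto t^r$ gives $(a+b)^r \ge a^r + b^r$. If $1 < p < 2$, then $r < 1$ and concavity of $t \mapsto t^r$ yields $\tfrac{a^r + b^r}{2} \le \bigl(\tfrac{a+b}{2}\bigr)^r$, hence $(a+b)^r \ge 2^{r-1}(a^r + b^r)$. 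In both cases, with $c_p = \min(1, 2^{p/2-1})$,
\begin{equation*}
    \|(x_0, y_0) - (x_1, y_1)\|^p \geq c_p\bigl(\|x_0 - x_1\|^p + \|y_0 - y_1\|^p\bigr).
\end{equation*}

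\textbf{Step 2 (integration and projection).} Let $\Gamma \in \Pi(\gamma_0, \gamma_1)$, viewed as a probability measure on $(\R^d \times \R^d)^2$ with coordinates $(x_0, y_0, x_1, y_1)$. Integrating the pointwise inequality gives
\begin{equation*}
    \int \|(x_0, y_0) - (x_1, y_1)\|^p \di \Gamma \ge c_p \int \|x_0 - x_1\|^p \di \Gamma + c_p \int \|y_0 - y_1\|^p \di \Gamma.
\end{equation*}
The pushforward of $\Gamma$ under $(x_0, x_1)$ belongs to $\Pi(\rho_0, \rho_1)$, since $\gamma_i$ has first marginal $\rho_i$; similarly, the pushforward under $(y_0, y_1)$ belongs to $\Pi(\mu_0, \mu_1)$. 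Hence the two integrals on the right are lower bounded by $W_p^p(\rho_0, \rho_1)$ and $W_p^p(\mu_0, \mu_1)$ respectively.

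\textbf{Step 3 (infimum over couplings).} Taking the infimum over $\Gamma \in \Pi(\gamma_0, \gamma_1)$ turns the left-hand side into $W_p^p(\gamma_0, \gamma_1)$ while the right-hand side is unchanged, yielding the claim. There is no genuine obstacle here; the only content is the elementary pointwise comparison in Step 1, and the sharpness of the constant $c_p = \min(1, 2^{p/2-1})$ can be checked by testing on Dirac masses aligned to equalise the two error contributions.
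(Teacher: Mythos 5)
Your proof is correct and follows essentially the same route as the paper: expand $\|(x_0,y_0)-(x_1,y_1)\|^p$ via the product Euclidean norm, apply the elementary inequality $(a+b)^{p/2}\ge c_p(a^{p/2}+b^{p/2})$, and project a coupling of $\gamma_0,\gamma_1$ onto the $x$- and $y$-pairs to obtain admissible couplings of the marginals. The only cosmetic difference is that the paper begins with an optimal coupling $\eta\in\Pi(\gamma_0,\gamma_1)$ while you integrate against an arbitrary coupling and take the infimum at the end; your remark that optimality of the $\gamma_i$ is never used is also correct and is implicit in the paper's argument.
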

\begin{proof}
    Set $\eta \in \Pi(\gamma_0, \gamma_1) \subset\Pi(\rho_0, \mu_0, \rho_1, \mu_1)$ as an optimal $p$-cost transport on $\R^d \times \R^d$ between $\gamma_0$ and $\gamma_1$. The projections onto first and third or second and fourth coordinates give couplings between $ \rho_0$ and $\rho_1$ or $\mu_0$ and $\mu_1$ respectively. These are candidates for the respective transport problems, so
    \begin{align*}
        W_p^p(\gamma_0, \gamma_1) &= \int_{\R^{2d} \times \R^{2d}} \|(x_0, y_0) - (x_1, y_1)\|^p \di \eta(x_0, y_0, x_1, y_1)\\
        &= \int_{\R^{2d} \times \R^{2d}} \left( \sum_{i=1}^d (x_{0, i} - x_{1,i})^2 + \sum_{i=1}^d (y_{0, i} - y_{1,i})^2 \right)^{p/2} \di \eta(x_0, y_0, x_1, y_1)\\
        &\geq c_p \int_{\R^{2d}} \|x_0 - x_1\|^p \di \pi_{13\#}\eta + c_p\int_{\R^{2d}} \|y_0 - y_1\|^p \di \pi_{24\#}\eta \geq  W_p^p(\rho_0, \rho_1) + W_p^p(\mu_0, \mu_1).
    \end{align*}
where we used the inequality $(a+b)^{p/2} \geq c_p (a^{p/2} + b^{p/2})$ with $c_p = \min(1, 2^{\frac{p}{2}-1})$.
\end{proof}
The second observation is that ``plans are always at least as stable as maps". Fix $\rho$ and two targets $\mu_0, \mu_1$. Assume the existence of maps $T_i$ for the $p$-transport between $\rho$ and $\mu_i$ and set $\gamma_{\rho \to \mu_i} = (\id, T_{\rho \to \mu_i})_\# \rho \in \Pi(\rho, \mu_i)$ as the optimal plans induced by them. Then
\begin{equation}
\label{eq: plans better than maps}
    W_p(\gamma_{\rho \to \mu_0}, \gamma_{\rho \to \mu_1}) \leq \| T_0 - T_1 \|_{L^p(\rho)}.
\end{equation}
To see this, it suffices to observe that $(\id, T_{\rho \to \mu_0}, \id, T_{\rho \to \mu_1})_{\#}\rho$ is an admissible coupling between $\gamma_{\rho \to \mu_0}$ and $ \gamma_{\rho \to \mu_1}$, and its transport cost is precisely $\| T_0 - T_1 \|_{L^p(\rho)}^p$. Consequently, all the results of \cite{delalande2023quantitative, letrouit2024gluing} pass directly to plans, so that for the continuous case \ref{intro: enum smooth hypoth}, $\mathcal{E} = 0$ with $q=1/6$, under suitable assumptions on $\rho$ and the target support.

The third observation is related to the following natural question: Is the worst-case behaviour of plans any better than that of maps? Consider again Example \ref{ex: delalande disc example}, except now we are interested in the Wasserstein distance between the optimal plans $\gamma_{\rho \to \mu_{\theta}} = (\id, T_{\rho \to \mu_\theta})_{\#}\rho$. Consider the following coupling $\eta \in \Pi(\gamma_{\rho \to \mu_0}, \gamma_{\rho \to \mu_1})$, defined by 
\begin{equation*}
    \eta = (\id, T_{\rho \to \mu_0}, R_\theta, R_\theta T_{\rho \to \mu_0}) \in \Pi(\gamma_{\rho\to \mu_0}, \gamma_{\rho\to \mu_\theta}),
\end{equation*}
where $R_\theta$ is the matrix representing rotation anticlockwise by $\theta$ in $\R^2$. This is a valid coupling since $(R_{\theta}, R_\theta)_\#\gamma_{\rho\to \mu_0} = \gamma_{\rho\to \mu_\theta}$. The coupling $\eta$ gives an upper bound on the $W_2$ distance between the two plans. In particular,
\begin{align*}
    W_2^2(\gamma_{\rho \to \mu_{0}}, \gamma_{\rho \to \mu_{\theta}}) \leq& \int_{\R^{2} \times \R^{2}} \|x_0-x_1\|^2 + \|y_0 - y_1\|^2 \di \eta(x_0, y_0, x_1, y_1)\\
    =&
    \int_{\R^{2}} \| x- R_\theta x\|^2 + \| T_{\rho \to \mu_0}(x) - R_\theta T_{\rho \to\mu_0}(x)\|^2 \di \rho(x)\\
    =& \frac{1}{\pi}\int_0^1 \int_{0}^{2\pi} \Bigl( 2r\sin(\theta/2)\Bigr)^2 r \di \phi \di r + \Bigl(2\sin(\theta/2)\Bigr)^2\\
    =& \left(\frac{1}{2} + 1\right)\Bigl( 2\sin(\theta/2)\Bigr)^2 = \frac{3}{2}W_2^2(\mu_0,\mu_\theta)
\end{align*}
since $\| T_{\rho \to \mu_0}(x) - R_\theta T_{\rho \to\mu_0}(x)\| = 2\sin(\theta/2)$ $\rho$-a.e.
Hence, in this case, the plans exhibit Lipschitz behaviour, better than the Hölder $1/2$ of the maps. The argument below demonstrates that plans cannot be Lipschitz in general, but leaves the door open to exponents in $(1/2, 1)$.

\begin{lemma}
\label{lem: plan converges to LOT distance}
For fixed $\rho$ and two targets $\mu_0, \mu_1$, all compactly supported. Set $\gamma_i \in \Pi(\rho, \mu_i)$ as the optimal plans induced by optimal maps $T_i$. Let 
\begin{equation*}
    c_\varepsilon(x_0, y_0, x_1, y_1) = \|x_0 - x_1\|^2 + \varepsilon \|y_0 - y_1\|^2,
\end{equation*}
and denote
\begin{equation}
\label{eq: epsilon cost between plans}
    \tau_{c_\varepsilon}(\gamma_0, \gamma_1) = \inf_{\eta \in \Pi(\gamma_0, \gamma_1)} \int_{\R^4} c_\varepsilon(x_0, y_0, x_1, y_1) \di \eta(x_0, y_0, x_1, y_1)
\end{equation}
as the transport cost between plans with respect to $c_{\varepsilon}$. Then
\begin{equation*}
    \frac{1}{\varepsilon} \tau_{c_\varepsilon}(\gamma_0, \gamma_1) \to \| T_0 - T_1 \|^2_{L^2(\rho)},
\end{equation*}
and optimal plans $\eta_\varepsilon$ for \eqref{eq: epsilon cost between plans} converge weakly to $(\id, T_0, \id, T_1)_\#\rho$.
\end{lemma}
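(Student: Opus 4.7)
The plan is to bracket $\frac{1}{\varepsilon}\tau_{c_\varepsilon}(\gamma_0, \gamma_1)$ by matching upper and lower bounds, identifying the weak limit of the optimal plans $\eta_\varepsilon$ along the way. For the upper bound, since $\gamma_i = (\id, T_i)_\#\rho$, the coupling $\eta^\star := (\id, T_0, \id, T_1)_\#\rho$ lies in $\Pi(\gamma_0, \gamma_1)$. Its $c_\varepsilon$-cost is exactly $\varepsilon \|T_0-T_1\|_{L^2(\rho)}^2$, so that $\frac{1}{\varepsilon}\tau_{c_\varepsilon}(\gamma_0, \gamma_1) \leq \|T_0-T_1\|_{L^2(\rho)}^2$ for every $\varepsilon > 0$.

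For the lower bound and convergence of plans, I would let $\eta_\varepsilon$ denote an optimiser for $\tau_{c_\varepsilon}$. Combining optimality with the upper bound gives $\int \|x_0-x_1\|^2 \di\eta_\varepsilon \leq \tau_{c_\varepsilon}(\gamma_0,\gamma_1) \leq \varepsilon \|T_0-T_1\|_{L^2(\rho)}^2 \to 0$. Compact support of $\rho, \mu_0, \mu_1$ yields tightness of $\{\eta_\varepsilon\}$, so Prokhorov extracts a subsequence converging weakly to some $\eta_0$, which inherits the marginals $\gamma_0, \gamma_1$ by continuity of the projections. Lower semicontinuity of the nonnegative continuous map $\|x_0-x_1\|^2$ under weak convergence forces $\int \|x_0-x_1\|^2 \di\eta_0 = 0$, so $\eta_0$ is concentrated on the diagonal $\{x_0=x_1\}$. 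Disintegrating $\eta_0$ with respect to its $x_0$-marginal $\rho$, each conditional is supported in $\{x_1=x_0\}$ with $y_i$-marginal equal to the disintegration $(\gamma_i)_{x_0} = \delta_{T_i(x_0)}$ (since $\gamma_i$ is induced by a map), hence the conditional equals $\delta_{(x_0, T_0(x_0), x_0, T_1(x_0))}$ for $\rho$-a.e.\ $x_0$. Thus $\eta_0 = \eta^\star$, and since every weakly convergent subsequence has the same limit, the entire family $\eta_\varepsilon$ converges weakly to $\eta^\star$.

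Finally, the nonneg decomposition $\frac{1}{\varepsilon}\tau_{c_\varepsilon}(\gamma_0,\gamma_1) = \frac{1}{\varepsilon}\int\|x_0-x_1\|^2 \di\eta_\varepsilon + \int\|y_0-y_1\|^2 \di\eta_\varepsilon$ together with weak lower semicontinuity applied to the second integrand yields $\liminf_{\varepsilon\to 0}\frac{1}{\varepsilon}\tau_{c_\varepsilon}(\gamma_0, \gamma_1) \geq \int \|y_0-y_1\|^2 \di\eta^\star = \|T_0-T_1\|_{L^2(\rho)}^2$, matching the upper bound. The main obstacle is the disintegration identification step: one must carefully justify, via a standard disintegration theorem, that any element of $\Pi(\gamma_0, \gamma_1)$ supported on $\{x_0 = x_1\}$ must coincide with $\eta^\star$ whenever both $\gamma_i$ are induced by maps. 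All remaining ingredients are routine tightness and lower semicontinuity arguments, relying crucially on the compact support hypothesis.
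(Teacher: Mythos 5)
Your proposal is correct and follows essentially the same strategy as the paper: take the competitor $\eta^\star = (\id, T_0, \id, T_1)_\#\rho$ for the upper bound, extract a weak limit $\eta_0$ of the optimisers $\eta_\varepsilon$, show $\eta_0$ is concentrated on $\{x_0 = x_1\}$, identify $\eta_0 = \eta^\star$, and close with a squeeze. The one place you diverge is in identifying $\eta_0$: the paper observes that $\|c_0 - c_\varepsilon\|_\infty \to 0$ makes $\eta_0$ an optimiser for $c_0$, then asserts that zero-cost gluing through the identity is unique when both $\gamma_i$ are induced by maps; you instead get concentration on the diagonal directly from the explicit bound $\int \|x_0 - x_1\|^2 \di\eta_\varepsilon \leq \varepsilon \|T_0 - T_1\|_{L^2(\rho)}^2$ and then spell out the disintegration argument showing the conditional of any diagonal-supported coupling in $\Pi(\gamma_0,\gamma_1)$ must be $\delta_{(x, T_0(x), x, T_1(x))}$. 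This is a genuinely more self-contained route to the same identification and makes precise what the paper leaves as a one-line remark; it is a correct and slightly cleaner write-up of the same idea.
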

\begin{proof}
    This is simply a consequence of the stability of plans with respect to uniform convergence of costs. We present here a self-contained proof of this fact. Consider optimisers $\eta_\varepsilon \in \Pi(\gamma_0, \gamma_1)$ for the $c_\varepsilon$ problem. By compactness, we can assume these converge weakly up to a subsequence to some $\eta_0 \in \Pi(\gamma_0, \gamma_1)$. Then strong-weak convergence gives
    \begin{equation*}
        \int_{\R^{2d}} c_\varepsilon \di \eta_\varepsilon \to \int_{\R^{2d}} c_0 \di \eta_0.
    \end{equation*}
    But we also have
    \begin{equation*}
        |\tau_{c_0}(\gamma_0, \gamma_1) - \tau_{c_{\varepsilon}}(\gamma_0, \gamma_1) | \leq \|c_0 - c_\varepsilon\|_\infty,
    \end{equation*}
    and hence $\int c_0 \di \eta_0 = \tau_{c_0}(\gamma_0, \gamma_1)$. In other words, the limit $\eta_0$ is optimal. But this optimiser must precisely be
    \begin{equation*}
        \eta_0 = (\id, T_0, \id, T_1)_{\#}\rho
    \end{equation*}
    since it has zero cost for $c_0 = \| x_0 - x_1\|^2$, and this is the only way to glue through the identity (since we have existence of maps). Hence
    \begin{equation*}
        \int_{\R^{4d}} \|y_0 - y_1\|^2 \di \eta_\varepsilon(x_0, y_0, x_1, y_1) \leq \frac{1}{\varepsilon} \tau_{c_\varepsilon}(\gamma_0, \gamma_1) \leq \| T_0 - T_1 \|^2_{L^2(\rho)},
    \end{equation*}
    where the first inequality is by monotonicity of the cost $\| y_0 - y_1\|^2 \leq \frac{1}{\varepsilon}c_\varepsilon(x_0, y_0, x_1, y_1) $ and the fact that $\eta_\varepsilon$ is optimal for $c_\varepsilon$, and the second inequality is a consequence of the plan $(\id, T_0, \id, T_1)_\#\rho$ as a competitor for the $c_\varepsilon$ problem. Passing to the limit $\varepsilon \to 0$, the left hand side converges to $\| T_0 - T_1 \|_{L^2(\rho)}^2$ by weak convergence, and hence so does $\frac{1}{\varepsilon} \tau_{c_\varepsilon}(\gamma_0, \gamma_1)$ by squeezing.
\end{proof}

\begin{proposition}[Optimal plans cannot be Lipschitz]
    Fix $\rho$ a probability density uniform on a compact set with non-empty interior in $\R^d$. Then, given a compact set $\Y$ in $\R^d$ with non empty interior, there \textbf{does not exist} $C_{\rho, \Y} >0$ such that for all $\mu_0, \mu_1$ probabilities on $\Y$,
    \begin{equation}
    \label{eq: assumption plans are lip}
        W_2(\gamma_{\rho \to \mu_0}, \gamma_{\rho \to \mu_1}) \leq C W_2(\mu_0, \mu_1).
    \end{equation}
    where here $\gamma_{\rho \to \mu_i} = (\id, T_{\rho \to \mu_i})_\#\rho$ are the plans induced by the optimal maps from $\rho \to \mu_i$ for quadratic cost.
\end{proposition}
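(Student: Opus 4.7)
My plan is to exhibit, for the given $\rho$ and $\Y$, a sequence of pairs $(\mu_0^\varepsilon, \mu_1^\varepsilon) \in \mathcal{P}(\Y)^2$ for which the ratio $W_2(\gamma_{\rho \to \mu_0^\varepsilon}, \gamma_{\rho \to \mu_1^\varepsilon})/W_2(\mu_0^\varepsilon, \mu_1^\varepsilon)$ diverges as $\varepsilon \to 0$, which immediately rules out any uniform Lipschitz constant. The construction is a rescaled and localized version of Example \ref{ex: delalande disc example}: the two Diracs are brought within distance $\varepsilon$ of a common interior point $y_0 \in \Y$ and then rotated by a small angle $\theta = \varepsilon$.

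Concretely, I would fix $y_0$ in the interior of $\Y$ and choose unit vectors $v \perp w$ such that the hyperplane through $y_0$ normal to $v$ splits $\rho$ into halves of equal mass (such $v$ exists by continuity of the split as the normal rotates on the sphere). For $\varepsilon > 0$ small and $\theta = \varepsilon$, set $\mu_0^\varepsilon = \tfrac{1}{2}(\delta_{y_0 + \varepsilon v} + \delta_{y_0 - \varepsilon v})$ and $\mu_1^\varepsilon = \tfrac{1}{2}(\delta_{y_0 + \varepsilon v_\theta} + \delta_{y_0 - \varepsilon v_\theta})$ with $v_\theta = (\cos\theta) v + (\sin\theta) w$. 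A direct computation gives $W_2(\mu_0^\varepsilon, \mu_1^\varepsilon) = 2\varepsilon \sin(\theta/2) \sim \varepsilon^2$. The Brenier maps $T_0^\varepsilon, T_1^\varepsilon$ are bisector-type partitions of $\rho$; their two bisector hyperplanes differ by a rotation of angle $\theta$ around $y_0$, so $T_0^\varepsilon$ and $T_1^\varepsilon$ disagree on two symmetric wedges of $\rho$-mass $\sim c_\rho \theta$ on opposite sides of $y_0$, on which points are sent to opposite Diracs (so $\|T_0^\varepsilon - T_1^\varepsilon\| \approx 2\varepsilon$ there).

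The heart of the argument is the lower bound $W_2^2(\gamma_0^\varepsilon, \gamma_1^\varepsilon) \gtrsim \min(\varepsilon^2 \theta, \theta^2)$. I would decompose any coupling $\eta \in \Pi(\gamma_0^\varepsilon, \gamma_1^\varepsilon)$ by which of the two Dirac labels the two $y$-coordinates get matched to: writing $2m \in [0,1]$ for the mass of like-sign pairings, the mismatched portion contributes at least $c\varepsilon^2(1-2m)$ to the $y$-cost (since opposite-sign Diracs are at distance $\approx 2\varepsilon$). The marginal constraints force that, whenever $m > \tfrac{1}{2} - c_\rho\theta$, an excess mass $q = m - \tfrac{1}{2} + c_\rho\theta$ must be transported in the $x$-variable between the two wedges, which sit at positive distance in $\supp\rho$. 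A cascade/Dirichlet-type lower bound then yields $x$-cost $\gtrsim c' q^2$; optimizing the sum $c\varepsilon^2 \cdot 2(c_\rho\theta - q) + c' q^2$ over $q \in [0, c_\rho\theta]$ delivers the claimed bound. With $\theta = \varepsilon$ this gives $W_2(\gamma_0^\varepsilon, \gamma_1^\varepsilon) \gtrsim \varepsilon^{3/2}$ against $W_2(\mu_0^\varepsilon, \mu_1^\varepsilon) \sim \varepsilon^2$, i.e.\ ratio $\gtrsim \varepsilon^{-1/2} \to \infty$, contradicting any Lipschitz estimate.

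The main obstacle is the matching lower bound $\gtrsim q^2$ on the $\rho$-to-$\rho$ self-coupling with prescribed ``flux'' $q$ across the bisector: the cascade coupling supplies the upper bound $\lesssim q^2$ immediately, but a matching lower bound needs a Poincaré/Dirichlet-type inequality with constants depending only on the geometry of $\supp\rho$. One can instead try to bypass this by an argument based on Lemma \ref{lem: plan converges to LOT distance} applied in a cleverly chosen regime, but the geometric flavor of the statement suggests the direct Dirichlet-style estimate is unavoidable. Modulo this lower bound, the remainder of the proof reduces to careful bookkeeping of the two regimes $\theta \leq \varepsilon^2$ and $\theta > \varepsilon^2$.
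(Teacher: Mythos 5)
Your construction is pointed in the right direction, but the heart of the argument — the lower bound $W_2^2(\gamma_0^\varepsilon, \gamma_1^\varepsilon) \gtrsim \min(\varepsilon^2\theta, \theta^2)$ — is precisely the piece that is missing, and you acknowledge this yourself. Establishing that any competitor coupling must pay $x$-cost $\gtrsim q^2$ to reroute ``excess'' mass $q$ between the two wedges is not automatic: the wedges are not at positive distance from each other, since they share the $(d-2)$-dimensional locus where the two bisecting hyperplanes intersect, so per-unit-mass transport cost can be made arbitrarily small near that set. The correct estimate must weigh the wedge density against the radial travel distance (one expects a bound of the form $\gtrsim q^2/\theta$, slightly stronger than your $q^2$, obtained by noting that $\rho$-mass $q$ inside a wedge of angular aperture $\theta$ must extend to radius $\gtrsim\sqrt{q/\theta}$), and making this rigorous for a general uniform density on a general compact set with non-empty interior is the Dirichlet-type inequality you flag. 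As written, this step is not carried out, so there is a genuine gap.

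More strikingly, the route you consider and then dismiss — ``an argument based on Lemma \ref{lem: plan converges to LOT distance} applied in a cleverly chosen regime'' — is exactly what the paper does, and it removes the need for \emph{any} lower bound on the Wasserstein distance between plans. The paper argues by contradiction: assume \eqref{eq: assumption plans are lip} holds, reduce to $\Y = \overline{B}(0,1)$, and for arbitrary $\mu_0,\mu_1\in\mathcal{P}(\Y)$ apply the hypothesis to the contracted targets $\mu_i^\varepsilon = p_{\varepsilon\#}\mu_i$. A change of variables identifies $W_2^2(\gamma_{\rho\to\mu_0^\varepsilon}, \gamma_{\rho\to\mu_1^\varepsilon})$ with $\tau_{c_{\varepsilon^2}}(\gamma_{\rho\to\mu_0}, \gamma_{\rho\to\mu_1})$ and gives $W_2(\mu_0^\varepsilon,\mu_1^\varepsilon) = \varepsilon W_2(\mu_0,\mu_1)$, so \eqref{eq: assumption plans are lip} becomes $\varepsilon^{-2}\tau_{c_{\varepsilon^2}}(\gamma_{\rho\to\mu_0}, \gamma_{\rho\to\mu_1}) \leq C^2 W_2^2(\mu_0,\mu_1)$; letting $\varepsilon\to 0$ and invoking Lemma \ref{lem: plan converges to LOT distance} upgrades this to the map estimate $\|T_{\rho\to\mu_0}-T_{\rho\to\mu_1}\|_{L^2(\rho)} \leq C\, W_2(\mu_0,\mu_1)$, which Example \ref{ex: delalande disc example} already refutes. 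Your choice $\theta=\varepsilon$ is in effect an attempt to run both limits simultaneously; decoupling them — first the blow-up $\varepsilon\to0$ at fixed targets, then the known H\"older-$1/2$ failure for maps — is what makes the argument close without any hard geometric lower bound.
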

\begin{proof}
    Since $\Y$ has non-empty interior, it contains a closed ball. By invariance of optimisers to the quadratic OT problem under dilations and translations, we may assume $\Y = \overline{B}(0, 1) =: B_1$. Assume that \eqref{eq: assumption plans are lip} holds on $B_1$ with constant $C> 0$. Given $\mu_0, \mu_1 \in \mathcal{P}(B_1)$, we define the contractions
    \begin{equation*}
        \mu_i^\varepsilon := p_{\varepsilon\#} \mu_i \; \text{ where } p_\varepsilon(y) = \varepsilon y.
    \end{equation*}
    We have $\mu_i^\varepsilon \in \mathcal{P}(B_1)$ and hence
    \begin{equation}
    \label{eq: lip for the rho eps plans}
        W_2(\gamma_{\rho \to \mu_0^\varepsilon}, \gamma_{\rho \to \mu_1^\varepsilon}) \leq C W_2(\mu_0^\varepsilon, \mu_1^\varepsilon).
    \end{equation}
    A change of variables shows that
    \begin{equation*}
        W_2(\mu_0^\varepsilon, \mu_1^\varepsilon) = \varepsilon W_2(\mu_0, \mu_1)
    \end{equation*}
    since $\pi \in \Pi(\mu_0, \mu_1)$ if and only if $(\varepsilon y_0, \varepsilon y_1)_\#\pi \in \Pi(\mu_0^\varepsilon, \mu_1^\varepsilon)$.
    Furthermore,
    \begin{equation*}
        \eta \in \Pi(\gamma_{\rho \to \mu_0}, \gamma_{\rho \to \mu_1}) \iff (x_0, \varepsilon y_0, x_1, \varepsilon y_1)_\#\eta \in \Pi(\gamma_{\rho \to \mu_0^\varepsilon}, \gamma_{\rho \to \mu_1^\varepsilon})
    \end{equation*}
    and so
    \begin{align*}
         W_2^2(\gamma_{\rho \to \mu_0^\varepsilon}, \gamma_{\rho \to \mu_1^\varepsilon}) =& \inf_{\eta \in \Pi(\gamma_{\rho \to \mu_0}, \gamma_{\rho \to \mu_1})} \int \|x_0 - x_1\|^2 + \|\varepsilon y_0 - \varepsilon y_1 \|^2 \di \eta(x_0, y_0, x_1, y_1)\\
         =& \tau_{c_{\varepsilon^2}}(\gamma_{\rho \to \mu_0}, \gamma_{\rho \to \mu_1}).
    \end{align*}
    Consequently, \eqref{eq: lip for the rho eps plans} becomes
    \begin{equation*}
        \tau_{c_{\varepsilon^2}}(\gamma_{\rho \to \mu_0}, \gamma_{\rho \to \mu_1}) \leq C^2 \varepsilon^2 W_2^2(\mu_0, \mu_1).
    \end{equation*}
    Dividing by $\varepsilon^2$ and taking $\varepsilon \to 0$ using Lemma \ref{lem: plan converges to LOT distance} followed by a square root implies
    \begin{equation*}
        \|T_{\rho \to \mu_0} - T_{\rho \to \mu_1}\|_{L^2(\rho)} \leq C W_2(\mu_0, \mu_1) \text{ for all } \mu_0, \mu_1 \in \mathcal{P}(B_1).
    \end{equation*}
    But this is known to be impossible: Example \ref{ex: delalande disc example} contradicts this. Thus, there can be no $C_{\rho, \Y} >0$ such that \eqref{eq: assumption plans are lip} holds.
\end{proof}
\begin{remark}
     If one could quantify the convergence in Lemma \ref{lem: plan converges to LOT distance}, then the cost $c_\varepsilon$ could potentially be used to invert inequalities of the form \eqref{eq: plans better than maps} and, given a stability result about plans, infer back to stability about the maps.
\end{remark}

The principal aim of this report is to begin to treat quantitative stability beyond the case where one measure is fixed absolutely continuous. This is important for applications, as one might often work with a source that is a discretisation of an absolutely continuous measure, using a very fine mesh. To the best of our knowledge, there are no known techniques to treat stability with this type of source measure. Our approach is, in leaving behind absolutely continuous measures, to go to the other extreme. We study the quantitative stability properties of the fully discrete transport problem, aiming to recover statements for general target measures as the result of a qualitative limit. In the fully discrete case, the optimal transport problem becomes a linear program, which gives us another lens through which to understand the stability.
\chapter{Stability with one measure fixed absolutely continuous}
\label{ch: ac stability}

In the first half of this chapter, we review some of the literature on the quantitative stability of the quadratic optimal transport problem with one measure fixed and absolutely continuous. The first result of this form without regularity hypotheses on the maps was attained in \cite{berman2021convergence}. In \cite{delalande2023quantitative}, techniques were developed to attain stability exponents in \eqref{intro: example map stability statement} that are independent of the dimension $d$. These have been further used to extend the same results to a larger class of source measures in \cite{letrouit2024gluing}, and to Riemannian manifolds in \cite{kitagawa2025stability}. In general, these techniques have been designed to treat the quadratic cost, with some extensions to $p$-costs in \cite{mischler2024quantitative}. For a more complete account of this theory, see \cite{letrouit2025lectures}.

In the second half of the chapter, we provide an extension of a certain strong convexity inequality to a larger class of measures, using techniques developed in \cite{letrouit2024gluing}. This extends the results on the quantitative stability of Wasserstein barycentres in \cite{carlier2024barycentres} to a much larger class of absolutely continuous measures.

\section{The Kantorovich functional}

Fix $\rho$ to be an absolutely continuous probability measure with support $\X$, which is the closure of a connected open bounded set in $\R^d$. Fix also $\Y \subset \R^d$ compact. To study the stability of Brenier potentials, we consider the semi-dual problem
\begin{equation}
\label{eq: semidual quadratic}
    \min_{\psi \in C_b(\Y)} \int_\X \psi^*(x) \di \rho(x) + \int \psi(y) \di \mu(y),
\end{equation}
where $\psi^*$ represents the Legendre transform defined in \eqref{eq: legendre transform defn}.
This problem is a convex optimisation problem over $C_b(\Y)$, and hence (informally) stability of minimisers is equivalent to strong convexity of the objective \eqref{eq: semidual quadratic}. The second term is linear, so we can focus on the first term, which motivates the following definition. The Kantorovich functional is defined as
\begin{equation*}
    \K_\rho : C_b(\Y) \to \R \; \quad \psi \mapsto \int_\X \psi^*(x) \di \rho(x),
\end{equation*}
where the Legendre transform is taken as in \eqref{eq: legendre transform defn}. First introduced in \cite{kitagawa2019convergence} to study the convergence of algorithms for semi-discrete optimal transport, the Kantorovich functional is the central tool for the stability results presented in this chapter. The following proposition is a minor generalisation of \cite[Lemma 1.8]{delalandethesis}, providing a characterisation of the subdifferential of $\K_\rho$.
\begin{proposition}[Characterisation of the Subdifferential]
\label{prop: subdiff characterisation}
$\K_\rho$ is a convex, 1-Lipschitz functional on $C_b(\Y)$ equipped with $\| \cdot \|_\infty$ norm. The subdifferential of $\K_\rho$ is given by
\begin{equation}
\label{eq: K rho subdiff char by projection}
    \partial \K_\rho(\psi) = \left\{ -  p_{Y\#}\gamma \;|\;  \gamma \in \mathcal{P}(\X \times \Y) \text{ with } p_{X\#}\gamma = \rho \text{ and }\supp \gamma \subset \partial \psi \right\}.
\end{equation}
where $p_Y: \X \times \Y \to \Y$ denotes the canonical projection onto $\Y$ and
$\partial \psi\subset \X \times \Y$ denotes the graph on the subdifferential of $\psi^*$.
\end{proposition}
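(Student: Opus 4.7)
I would split the statement into three parts: (i) convexity and 1-Lipschitz continuity of $\K_\rho$; (ii) the easy inclusion $\supset$ in the subdifferential characterisation; and (iii) the harder converse inclusion $\subset$. For (i), rewriting $\K_\rho(\psi) = \int_\X \sup_{y \in \Y} \bigl(\langle x|y\rangle - \psi(y)\bigr) \di\rho(x)$ exhibits $\psi \mapsto \psi^*(x)$ as a supremum of evaluation maps, which are affine in $\psi$ and hence convex; integration against $\rho$ preserves convexity. The 1-Lipschitz bound follows from the pointwise estimate $|\psi_1^*(x) - \psi_2^*(x)| \leq \|\psi_1 - \psi_2\|_\infty$ (standard for Legendre transforms), integrated against the probability $\rho$.

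For (ii), fix $\gamma$ with $p_{X\#}\gamma = \rho$ and $\supp\gamma \subset \partial \psi$. The Fenchel--Young inequality $\tilde\psi^*(x) + \tilde\psi(y) \geq \langle x|y\rangle$ holds pointwise for any $\tilde\psi \in C_b(\Y)$, and becomes an equality on $\partial\psi$ when $\tilde\psi = \psi$. Integrating both against $\gamma$, using $p_{X\#}\gamma = \rho$ on the left marginal, and subtracting, I obtain
\[
\K_\rho(\tilde\psi) - \K_\rho(\psi) \geq -\int_{\X \times \Y}\bigl(\tilde\psi(y) - \psi(y)\bigr) \di\gamma = \langle -p_{Y\#}\gamma,\, \tilde\psi - \psi\rangle,
\]
which is exactly the subgradient inequality, so $-p_{Y\#}\gamma \in \partial\K_\rho(\psi)$.

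For (iii), let $\nu \in \partial \K_\rho(\psi)$, viewed as an element of $C(\Y)^* = \mathcal{M}(\Y)$ via Riesz representation (using $\Y$ compact). My plan is to show $\mu := -\nu \in \mathcal{P}(\Y)$ and then invoke Kantorovich duality. Testing the subgradient inequality with $\tilde\psi = \psi + c$ (using $\K_\rho(\psi + c) = \K_\rho(\psi) - c$) forces $\nu(\Y) = -1$. Testing with $\tilde\psi = \psi + tf$ for $t > 0$ and $f \in C(\Y)$ with $f \geq 0$ (using $\tilde\psi \geq \psi$ implies $\K_\rho(\tilde\psi) \leq \K_\rho(\psi)$) and letting $t \to 0^+$ gives $\langle \nu, f\rangle \leq 0$. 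Hence $\mu := -\nu \in \mathcal{P}(\Y)$. The subgradient inequality then rearranges to $\K_\rho(\tilde\psi) + \int \tilde\psi\, \di\mu \geq \K_\rho(\psi) + \int \psi\, \di\mu$ for all $\tilde\psi \in C(\Y)$, so $\psi$ minimises the semi-dual objective \eqref{intro: max correlation and its dual} between $\rho$ and $\mu$. Kantorovich duality provides an optimal $\gamma \in \Pi(\rho, \mu)$, and the compatibility condition \eqref{intro: compatibility condition} (applied to the cost $-\langle x|y\rangle$) forces $\supp\gamma \subset \partial \psi$. Since $p_{Y\#}\gamma = \mu = -\nu$, this exhibits $\nu = -p_{Y\#}\gamma$ in the claimed form.

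The main obstacle I expect is in (iii): no single step is computationally hard, but one must correctly translate subgradients in $C(\Y)^*$ into the normalisation $\nu(\Y) = -1$ and non-positivity via well-chosen test functions, and then recognise the resulting variational condition as the semi-dual OT problem so that existence of optimal plans and the compatibility condition can be invoked. Once $\mu = -\nu \in \mathcal{P}(\Y)$ is established, the remainder is a direct appeal to the OT framework recalled in the background section.
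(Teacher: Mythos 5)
Your proposal is correct and follows essentially the same strategy as the paper: establish convexity and 1-Lipschitz continuity of $\K_\rho$, show that any element $-\mu$ of the subdifferential must satisfy $\mu \in \mathcal{P}(\Y)$ via test functions, and then invoke Kantorovich duality together with the compatibility condition \eqref{intro: compatibility condition}. One difference worth noting: to prove non-negativity of $\mu = -\nu$, you test the subgradient inequality with $\tilde\psi = \psi + tf$ for continuous $f \geq 0$ and use monotonicity of the Legendre transform, staying entirely inside $C(\Y)$; the paper instead tests with indicator functions $\chi_A$, which lie outside $C_b(\Y)$ and require an extra Lusin-type density argument to justify optimality over $L^1(\mu)$. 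Your version of that step is cleaner and avoids that detour. Your explicit separation of the two inclusions (Fenchel--Young for $\supset$, the test-function argument for $\subset$) is also slightly more systematic than the paper's presentation, which routes both directions through the single equivalence \eqref{eq: equivalence in subdiff of k rho in argmin} and the compatibility condition. The underlying mathematics is the same.
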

\begin{proof}
    The subdifferential is a priori valued in the topological dual $C_b(\Y)^*$, the space of finite signed regular Borel measures. Since the Legendre transform is a 1-Lipschitz operator on $C_b(\Y)$, and it is pointwise convex, $\K_\rho$ is a 1-Lipschitz convex functional on $C_b(\Y)$. Hence, by subdifferential rules for convex functions (see \cite[Corollary 2.1]{attouch1986duality}),
\begin{equation}
\label{eq: equivalence in subdiff of k rho in argmin}
    -\mu \in \partial \K_\rho(\psi) \iff \psi \in \argmin \int_\X \psi^* \di \rho + \int_\Y \psi \di \mu.
\end{equation}
We now show that any such $\mu$ should be a positive probability measure. Assume first $\langle 1 | \mu \rangle = m \neq 1.$ Then for any $\psi \in C_b(\Y)$, replacing with $\psi +  \lambda$ for some $\lambda \in \R$ gives
\begin{equation*}
    \int_\X (\psi+ \lambda)^* \di \rho + \int_\Y \psi + \lambda \di \mu = \int_\X \psi^*  \di \rho + \int_\Y \psi \di \mu + \lambda(m -1)
\end{equation*}
and so taking $\lambda \to \pm \infty$ depending on the sign of $m-1$ gives that no argmin can exist. Thus, any element of $\partial \mathcal{K}_\rho$ should have total mass $-1$. Similarly, assume there exists some measurable $A \subset \Y$ for which $\langle \chi_A | \mu \rangle = m < 0$. As mentioned in the introduction, by a density argument (see \cite[Theorem 2.24]{rudin1987real}) \eqref{eq: semidual quadratic} has the same value as if we instead took the infimum over all $\psi \in L^1(\mu)$. Then, if $\psi \in C_b(\Y)$ is optimal, it is also optimal over all $L^1(\mu)$. Considering $\psi + \lambda \chi_A$ for $\lambda>0$, by monotonicity of the Legendre transform
\begin{equation*}
    \int_\X (\psi+ \lambda \chi_A)^* \di \rho + \int_\Y \psi + \lambda \chi_A \di \mu \leq \int_\X \psi^*  \di \rho + \int_\Y \psi \di \mu + \lambda m
\end{equation*}
from which again $\psi$ cannot belong to the argmin. Hence, any subdifferential element is a \textit{negative}  measure of mass $-1$.

Now knowing that the only measures in $\partial\K_\rho$ are negative probability measures, by \eqref{eq: equivalence in subdiff of k rho in argmin}, combined with the compatibility condition \eqref{intro: compatibility condition}, we arrive at \eqref{eq: K rho subdiff char by projection}.
\end{proof}

\begin{remark}
    In general, $\K_\rho$ is never strongly convex, as it is affine along the path $\psi + t$ for $t \in \R$. Paths corresponding to the same dual potential up to a constant. To avoid this, we fix the constants of the potentials so that the Brenier potential is mean free ( $\int \psi^* \di \rho = 0$). Since  $\supp \rho$ is the closure of a connected open set, \cite[Proposition 7.18]{santambrogio2015optimal} tells us that mean free potentials are unique in this setting.
\end{remark}
We now turn to strong convexity estimates for $\K_\rho$, which in turn will imply quantitative stability of Brenier potentials.
\begin{theorem}\textnormal{\cite[Theorem 2.1]{delalande2023quantitative}\cite[Theorem 2.1]{letrouit2024gluing}}
\label{thrm: strong convexity weak version}
    Let $Q \subset \R^d$ be a compact convex set with non-empty interior, and let $\rho$ be a probability density over $Q$ satisfying $M_\rho \geq \rho \geq m_\rho >0$. Then for all $\psi_0, \psi_1 \in C_b(\Y)$ we have
    \begin{equation*}
        \var_\rho(\psi_1^* - \psi_0^*) \leq C_\psi \langle \psi^*_1 - \psi_0^* | (\nabla \psi_0^*)_\#\rho - (\nabla \psi_1^*)_\#\rho \rangle
    \end{equation*}
    where $C_\psi = e \frac{M_\rho}{m_\rho} (M_\phi - m_\phi)$ for $m_\phi \leq \psi_i^* \leq M_\phi$.
\end{theorem}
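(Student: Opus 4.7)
The plan is to interpret the right-hand side as the symmetric Bregman divergence of $\K_\rho$ and to exploit the convexity of the Kantorovich functional along the affine interpolation between $\psi_0$ and $\psi_1$. I would set $\psi_t := (1-t)\psi_0 + t\psi_1 \in C_b(\Y)$ and $F(t) := \K_\rho(\psi_t) = \int_Q \psi_t^*(x)\,\di\rho(x)$. The envelope theorem applied to the supremum defining $\psi_t^*$ yields $F'(t) = -\langle \psi_1 - \psi_0 \mid (\nabla\psi_t^*)_\#\rho\rangle$ for a.e.\ $t$ (the pairing in the theorem should be read with $\psi_1 - \psi_0$, since the pushforwards $(\nabla\psi_i^*)_\#\rho$ are measures on $\Y$). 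Since $\K_\rho$ is convex by Proposition \ref{prop: subdiff characterisation}, $F$ is convex and Lipschitz on $[0,1]$, and the right-hand side of the target inequality is exactly $F'(1^-) - F'(0^+) \geq 0$.

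Next, I would rewrite this Bregman divergence pointwise as an integral of Fenchel--Young defects. Using the identity $\psi_i^*(x) = \langle x, \nabla\psi_i^*(x)\rangle - \psi_i(\nabla\psi_i^*(x))$, which holds $\rho$-a.e.\ by differentiability of convex functions at $\rho$-a.e.\ point, a direct algebraic rearrangement gives
\begin{equation*}
F'(1^-) - F'(0^+) = \int_Q \bigl(\Delta_0(x) + \Delta_1(x)\bigr)\,\di\rho(x),
\end{equation*}
where $\Delta_i(x) := \psi_i(\nabla\psi_{1-i}^*(x)) + \psi_i^*(x) - \langle x, \nabla\psi_{1-i}^*(x)\rangle \geq 0$ is the Fenchel--Young gap at $x$ produced by plugging the ``wrong'' transport target into $\psi_i$. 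Writing $f := \psi_1^* - \psi_0^*$, the inequality then reduces to the pointwise-level comparison $\var_\rho(f) \leq C_\psi \int_Q (\Delta_0 + \Delta_1)\,\di\rho$.

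The crux, and the main obstacle, is this final comparison: the $\psi_i$ are not assumed strongly convex, so one cannot bound $\Delta_i(x)$ below by a quadratic in $|\nabla\psi_0^*(x) - \nabla\psi_1^*(x)|$ pointwise. Following \cite{delalande2023quantitative, letrouit2024gluing}, my plan is to apply a layer-cake decomposition of $\var_\rho(f)$ over the super-level sets $E_s := \{x \in Q : f(x) > s\}$, and then show that for each $s$ the integrated defect picks up a contribution proportional to $\rho(E_s)(1 - \rho(E_s))$. Geometrically, crossing $\partial E_s$ forces $\nabla\psi_0^*$ and $\nabla\psi_1^*$ to swap in a quantitative way, and $\Delta_0 + \Delta_1$ measures the cost of that swap. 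The density bounds $m_\rho \leq \rho \leq M_\rho$ enter via a comparison with Lebesgue measure on $Q$, producing the factor $M_\rho/m_\rho$; the oscillation $M_\phi - m_\phi$ serves as an $L^\infty$ control on the transport length separating super- and sub-level sets; and the sharp constant $e$ emerges from the one-dimensional optimisation of $x \mapsto x \log(1/x)$ on $[0,1]$. The hardest step will be this geometric/isoperimetric passage, which requires either a quantitative Brunn--Minkowski inequality along transport rays or a careful slicing argument exploiting the structure of the Laguerre cells induced by the two potentials, together with a sharp one-dimensional comparison to produce the claimed dimension-free constant.
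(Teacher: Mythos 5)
The paper states this theorem as a citation to \cite{delalande2023quantitative,letrouit2024gluing} without reproducing a proof, so there is no argument in the paper itself to compare against; what follows is an assessment of your proposal on its own terms.

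Your reduction is correct and coincides with the standard one. Setting $F(t)=\K_\rho(\psi_t)$ with $\psi_t=(1-t)\psi_0+t\psi_1$, the envelope computation $F'(t)=-\langle \psi_1-\psi_0\mid(\nabla\psi_t^*)_\#\rho\rangle$ is rigorous for a.e.\ $t$ (convex functions are differentiable Lebesgue-a.e., $\rho$ is absolutely continuous, and the difference quotients are dominated by $\|\psi_1-\psi_0\|_\infty$); the right-hand side of the theorem is exactly $F'(1^-)-F'(0^+)\ge 0$; and substituting the Fenchel--Young equalities $\psi_i^*(x)+\psi_i(\nabla\psi_i^*(x))=\langle x\mid\nabla\psi_i^*(x)\rangle$ into this difference gives $\int_Q(\Delta_0+\Delta_1)\,\di\rho$ with the nonnegative gaps $\Delta_i(x)=\psi_i(\nabla\psi_{1-i}^*(x))+\psi_i^*(x)-\langle x\mid\nabla\psi_{1-i}^*(x)\rangle$ as you claim. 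You also correctly observe that the pairing as typeset should involve $\psi_1-\psi_0\in C_b(\Y)$ rather than $\psi_1^*-\psi_0^*$, since the pushforwards are measures on $\Y$.

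The genuine gap is the step you yourself label the crux: the inequality $\var_\rho(\psi_1^*-\psi_0^*)\le e\,\tfrac{M_\rho}{m_\rho}(M_\phi-m_\phi)\int_Q(\Delta_0+\Delta_1)\,\di\rho$. What you offer here is a roadmap rather than a proof: the layer-cake decomposition over super-level sets $E_s$ of $f=\psi_1^*-\psi_0^*$, a lower bound on the defect proportional to $\rho(E_s)(1-\rho(E_s))$, a Brunn--Minkowski or Pr\'ekopa--Leindler comparison to Lebesgue measure introducing $M_\rho/m_\rho$, an $L^\infty$ control through $M_\phi-m_\phi$ (which does control $\|\psi_1^*-\psi_0^*\|_\infty$), and the maximisation of $x\mapsto -x\log x$ producing the constant $e$. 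None of these intermediate claims is established, and the geometric assertion that ``crossing $\partial E_s$ forces $\nabla\psi_0^*$ and $\nabla\psi_1^*$ to swap in a quantitative way'' is stated heuristically, not proved. This is precisely where the convexity of $Q$, the density bounds, and the dimension-free constant all do their work in the Delalande--M\'erigot argument. As written, your proposal is a correct and rigorous reduction of the theorem to the integrated Fenchel--Young-gap inequality, together with a sketch of how that inequality should be attacked --- but it is not a complete proof of it.
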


Here, we make explicit the dependence of the above constant on $\psi$, in terms of the oscillation of the Brenier potentials $(M_\phi - m_\phi)$. When both $\X$ and $\Y$ are bounded, we can uniformly bound this, and also when $\X$ is bounded and all target measures satisfy a uniform $p$th moment bound for $p>d$, as demonstrated in \cite{berman2021convergence, delalande2023quantitative}. The above strong convexity directly implies the following quantitative stability estimates for both maps and potentials.
\begin{corollary} \textnormal{\cite{delalande2023quantitative}}
\label{corr: Potential stability source convex}
    Let $\X \subset \R^d$ be a compact convex set with non-empty interior, and let $\rho$ be a probability density over $\X$ satisfying $M_\rho \geq \rho \geq m_\rho >0$. Then for any $\Y \subset \R^d$ compact, there exists $C_{\rho, \Y} > 0$ such that
    \begin{equation}
        \| \psi^*_1 - \psi^*_0\|_{L^2(\rho)} \leq C W_1(\mu_0, \mu_1)^{1/2} \quad \forall \mu_0, \mu_1 \in \mathcal{P}(\Y),
    \end{equation}
    where $\psi_i$ are optimal for \eqref{eq: semidual quadratic} between $\rho$ and $\mu_i$.
\end{corollary}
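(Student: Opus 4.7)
The plan is to apply the strong convexity estimate of Theorem \ref{thrm: strong convexity weak version} to the Brenier potentials, then bound the resulting linear pairing by $W_1$ via Kantorovich--Rubinstein duality. First I would fix $\psi_0, \psi_1 \in C_b(\Y)$ as optimisers of \eqref{eq: semidual quadratic} between $\rho$ and $\mu_0, \mu_1$ respectively, normalised so that $\int_\X \psi_i^* \di \rho = 0$. Under the assumptions on $\rho$ and $\X$, the Remark following Proposition \ref{prop: subdiff characterisation} guarantees that this normalisation makes the Brenier potentials $\psi_i^*$ uniquely defined, and it has the effect of identifying $\var_\rho(\psi_1^* - \psi_0^*)$ with $\|\psi_1^* - \psi_0^*\|_{L^2(\rho)}^2$.

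Second, since $\rho$ is absolutely continuous, Brenier's theorem gives $(\nabla \psi_i^*)_\# \rho = \mu_i$, so Theorem \ref{thrm: strong convexity weak version} produces
\begin{equation*}
\|\psi_1^* - \psi_0^*\|_{L^2(\rho)}^2 \;\leq\; C_\psi \int_\Y (\psi_1^* - \psi_0^*)\,\di(\mu_0 - \mu_1).
\end{equation*}
Now I would bound the right-hand side by Kantorovich--Rubinstein duality, for which I need a Lipschitz bound on $\psi_1^* - \psi_0^*$ on the support of the target measures. Each $\psi_i^*$ is a supremum of affine maps with slopes in $\Y$, so its subgradients lie in $\overline{\conv}(\Y)$; hence $\psi_i^*$ is $R$-Lipschitz on $\R^d$ with $R = \sup_{y \in \Y}\|y\|$, and $\psi_1^* - \psi_0^*$ is $2R$-Lipschitz. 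Combining yields
\begin{equation*}
\|\psi_1^* - \psi_0^*\|_{L^2(\rho)}^2 \;\leq\; 2R\, C_\psi\, W_1(\mu_0,\mu_1),
\end{equation*}
and taking a square root gives the claim with $C = \sqrt{2R\, C_\psi}$.

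The main obstacle is that the constant $C_\psi$ from Theorem \ref{thrm: strong convexity weak version} depends on the oscillation $M_\phi - m_\phi$ of the Brenier potentials $\psi_i^*$, which a priori varies with $\mu_0, \mu_1$. The real work is showing that this oscillation is uniformly bounded over all $\mu_0, \mu_1 \in \mathcal{P}(\Y)$ in terms of $\rho$ and $\Y$ only. Since $\psi_i^*$ is $R$-Lipschitz and $\X$ is compact, the oscillation of $\psi_i^*$ on $\X$ is at most $R\,\diam(\X)$; combined with the mean-zero normalisation, this forces $\|\psi_i^*\|_{L^\infty(\X)} \leq R\,\diam(\X)$. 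This delivers a uniform $C_\psi$ depending only on $m_\rho$, $M_\rho$, $\diam(\X)$ and $\diam(\Y)$, and hence a constant $C_{\rho,\Y}$ of the required form.
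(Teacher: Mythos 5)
Your proposal is correct and takes essentially the same route as the paper: both argue that the oscillation of the Brenier potentials is uniformly bounded via the $\Y$-dependent Lipschitz constant of $\psi^*$, then apply Theorem \ref{thrm: strong convexity weak version} with Brenier's theorem and finish by Kantorovich--Rubinstein duality. The only cosmetic difference is that the paper exploits translation invariance to replace $\sup_{y\in\Y}\|y\|$ by $\diam\Y$, a sharper but inessential constant.
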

\begin{proof}
    We observe that in the preceding theorem, the constant $C$ depended on $\psi$ only by the oscillation $M_\phi - m_\phi$. By definition of the Legendre transform restricted to $\Y$,
    \begin{equation*}
        \psi^*(x) = \sup_{y \in \Y} \langle x | y \rangle - \psi(y)
    \end{equation*}
    is a supremum of $R_\Y$ Lipschitz functions $x \mapsto \langle x | y \rangle - \psi(y)$ where $R_\Y$ is the radius of the smallest ball containing $\Y$. Hence, $\psi^*$ is $R_\Y$ Lipschitz, and its oscillation over $\X$ is bounded by
    \begin{equation*}
        M_\phi - m_\phi \leq (\diam \X) R_\Y.
    \end{equation*}
    Furthermore, due to the invariance of the quadratic optimal transport problem to translations, translating $\Y$ allows us to replace $R_\Y$ with $\diam \Y$. Thus $C_\psi$ in \eqref{eq: stronger inequality convex support} is uniformly bounded in $\psi$, depending only on $\rho$ and $\Y$.

    Secondly, by Brenier's theorem, $(\nabla \psi_i^*)_{\#}\rho = \mu_i$ so that applying Theorem \ref{thrm: strong convexity weak version} gives
    \begin{equation*}
        \| \psi^*_1 - \psi_0^*\|_{L^2(\rho)}^2 = \var_\rho(\psi^*_1 - \psi_0^*) \leq C \langle \psi_0 - \psi_1 | \mu_1 - \mu_0 \rangle,
    \end{equation*}
    where the first equality is due to our choice of mean free potentials. Finally, since $\psi_1^* - \psi_0^*$ is $2 \diam \Y$ Lipschitz continuous, Kantorovich Rubinstein duality gives
    \begin{equation*}
        \| \psi^*_1 - \psi_0^*\|_{L^2(\rho)} \leq C W_1(\mu_0, \mu_1)^{1/2}
    \end{equation*}
    as required.
\end{proof}
\begin{corollary} \textnormal{\cite{delalande2023quantitative}}
\label{corr: map stability source convex}
    Let $\X \subset \R^d$ be a compact convex set with non-empty interior and $\mathcal{H}^{d-1}$ rectifiable boundary, and let $\rho$ be a probability density over $\X$ satisfying $M_\rho \geq \rho \geq m_\rho >0$. Then for any $\Y \subset \R^d$ compact, there exists $C_{\rho, \Y} > 0$ such that
    \begin{equation}
        \| T_1 - T_0\|_{L^2(\rho)} \leq C W_1(\mu_0, \mu_1)^{\frac{1}{6}} \quad \forall \mu_0, \mu_1 \in \mathcal{P}(\Y),
    \end{equation}  
    were $T_i$  is the unique optimal transport map between $\rho$ and $\mu_i$.
\end{corollary}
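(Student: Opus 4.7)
The plan is to deduce map stability from the already-established potential stability of Corollary \ref{corr: Potential stability source convex} via a one-sided interpolation inequality that trades $L^{2}$ control on convex Lipschitz potentials for $L^{2}$ control on their gradients. Concretely, since $T_{i} = \nabla \psi_{i}^{*}$ and we already know $\|\psi_{0}^{*} - \psi_{1}^{*}\|_{L^{2}(\rho)} \leq C W_{1}(\mu_{0}, \mu_{1})^{1/2}$, the task reduces to bounding $\|\nabla \psi_{0}^{*} - \nabla \psi_{1}^{*}\|_{L^{2}}$ by a power of $\|\psi_{0}^{*} - \psi_{1}^{*}\|_{L^{2}}$, and the power $1/3$ will yield the advertised exponent $1/6 = \tfrac{1}{2}\cdot\tfrac{1}{3}$.

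The central lemma I would prove (or cite, as it is essentially \cite[Proposition 3.3]{delalande2023quantitative}) is the following: if $u,v : \X \to \R$ are convex and $L$-Lipschitz on the convex set $\X$ with $\mathcal{H}^{d-1}$-rectifiable boundary, then
\begin{equation*}
\|\nabla u - \nabla v\|_{L^{2}(\X)}^{2} \leq C_{\X}\, L^{4/3} \,\|u-v\|_{L^{2}(\X)}^{2/3}.
\end{equation*}
The heuristic is that $h := u - v$ is a difference of convex functions whose distributional Hessian is a signed Radon measure with mass controlled by $L$ (via Alexandrov's theorem). Fixing a scale $r>0$, one controls $|\nabla h(x)|^{2}$ at a Lebesgue point by a one-sided finite-difference argument exploiting the convexity of $u$ and $v$ separately: this bounds $|\nabla h(x)|^{2}$ by $r^{-2}$ times an averaged oscillation of $h$ on $B(x,r)$, plus an error of order $Lr$ coming from the Hessian. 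Integrating in $x$, estimating by $L\|h\|_{L^{2}}/r + Lr$, and optimising over $r$ gives the $2/3$ exponent. The boundary hypothesis enters when $B(x,r) \not\subset \X$: the $\mathcal{H}^{d-1}$-rectifiability bounds the Lebesgue measure of the $r$-neighbourhood of $\partial \X$ by $C_\X r$, so the layer of ``bad'' points contributes a lower-order term.

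With the lemma in hand the conclusion is immediate. I would first use $\rho \geq m_{\rho} > 0$ to pass between $L^{2}(\rho)$ and $L^{2}(\X)$ norms up to constants. As in the proof of Corollary \ref{corr: Potential stability source convex}, both $\psi_{0}^{*}$ and $\psi_{1}^{*}$ are convex with Lipschitz constant at most $\diam \Y$ (after translation of $\Y$), so applying the lemma with $L = \diam \Y$ gives
\begin{equation*}
\|T_{0} - T_{1}\|_{L^{2}(\rho)}^{2} \leq C_{\rho,\X}\,(\diam \Y)^{4/3}\,\|\psi_{0}^{*}-\psi_{1}^{*}\|_{L^{2}(\rho)}^{2/3}.
\end{equation*}
Feeding in the $W_{1}^{1/2}$ bound from Corollary \ref{corr: Potential stability source convex} and taking square roots yields $\|T_{0}-T_{1}\|_{L^{2}(\rho)} \leq C_{\rho,\Y}\,W_{1}(\mu_{0},\mu_{1})^{1/6}$.

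The hard part is the interpolation lemma itself. Unlike the classical Sobolev interpolation $\|\nabla h\|_{L^{2}}^{2} \lesssim \|h\|_{L^{2}}\|\Delta h\|_{L^{2}}$, one cannot assume $C^{1,1}$ regularity of $\psi_{i}^{*}$, so the argument has to be conducted at the level of distributional Hessians, which are merely positive Radon measures of finite mass via Alexandrov. Keeping the exponents sharp requires the simultaneous convexity of \emph{both} $u$ and $v$ (a signed Hessian of $h = u-v$ with both positive and negative parts controlled by $L$) rather than convexity of $h$ alone. Handling the boundary contribution without losing the $1/3$ exponent is the other delicate point, and this is precisely where the $\mathcal{H}^{d-1}$-rectifiability of $\partial \X$ is used, strictly stronger than the convexity with non-empty interior assumed in Corollary \ref{corr: Potential stability source convex}.
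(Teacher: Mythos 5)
Your proposal is correct and matches the paper's approach: the paper deduces the result by combining the potential stability estimate (Corollary \ref{corr: Potential stability source convex}) with the Gagliardo--Nirenberg-type interpolation inequality for gradients of convex Lipschitz functions from Delalande--Mérigot (cited there as Proposition 4.1, which is the same statement you attribute to Proposition 3.3 — a version-numbering discrepancy only). Your additional sketch of how the interpolation lemma is proved (Alexandrov Hessians, finite-difference oscillation bounds, optimisation over the scale $r$, and the $\mathcal{H}^{d-1}$-rectifiable boundary layer) is a faithful account of the ingredients, though the paper itself simply cites the result without reproving it.
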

\begin{proof}
    One combines \cite[Proposition 4.1]{delalande2023quantitative} with Corollary \ref{corr: Potential stability source convex}, using again that the Lipschitz constants of $\psi_0^*$ and $\psi_1^*$ are uniformly bounded.
\end{proof}

\section{Glueing strong convexity inequalities}

The proof of Theorem \ref{thrm: strong convexity weak version} allows for a slightly stronger inequality presented below, which has found other usage, for example, in studying the stability of Wasserstein Barycentres in \cite{carlier2024barycentres} and for the optimal matching problem in \cite{huesmann2024asymptotics}. In particular, we have
\begin{theorem}\textnormal{\cite[Appendix B]{carlier2024barycentres}, \cite{delalande2023quantitative}}
\label{theorem: strong convexity convex support}
    Under the same hypotheses as Theorem \ref{thrm: strong convexity weak version}, for any $\Y \subset \R^d$ compact there exists $C_{\rho, \Y} >0$ such that for all $\psi_0, \psi_1 \in C_b(\Y)$,
\begin{equation}
    \label{eq: stronger inequality convex support}
    \var_\rho(\psi_1^* - \psi_0^*) \leq C \bigg(\K_\rho(\psi_1) -\K_\rho(\psi_0) + \langle \psi_1 - \psi_0 | (\nabla\psi_0^*)_\# \rho \rangle \bigg).
\end{equation}

\end{theorem}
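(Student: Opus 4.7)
The first step is to recognise both sides of \eqref{eq: stronger inequality convex support} in terms of Bregman divergences of the convex functional $\K_\rho$. Writing $\phi_i = \psi_i^*$ and $T_i = \nabla \phi_i$, Proposition \ref{prop: subdiff characterisation} combined with Brenier's theorem yields $-\mu_0 := -(\nabla \psi_0^*)_\# \rho \in \partial \K_\rho(\psi_0)$, so the RHS of \eqref{eq: stronger inequality convex support} is exactly the Bregman divergence
\begin{equation*}
    D_{\K_\rho}(\psi_1; \psi_0) := \K_\rho(\psi_1) - \K_\rho(\psi_0) + \langle \psi_1 - \psi_0 | \mu_0\rangle \geq 0.
\end{equation*}
Using the Legendre identity $\phi_0(x) + \psi_0(T_0(x)) = \langle T_0(x) | x\rangle$ $\rho$-a.e., a direct computation gives the pointwise representation
\begin{equation*}
    D_{\K_\rho}(\psi_1; \psi_0) = \int_\X e_1(x) \di\rho(x), \qquad e_1(x) := \phi_1(x) + \psi_1(T_0(x)) - \langle T_0(x) | x\rangle,
\end{equation*}
where $e_1 \geq 0$ pointwise by the Fenchel--Young inequality, with equality iff $T_0(x) = T_1(x)$.

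The analogous calculation shows that the RHS of Theorem \ref{thrm: strong convexity weak version} equals the \emph{symmetrised} Bregman divergence
\begin{equation*}
    \langle \psi_1 - \psi_0 | \mu_0 - \mu_1\rangle = \int (e_0 + e_1) \di\rho = D_{\K_\rho}(\psi_1;\psi_0) + D_{\K_\rho}(\psi_0;\psi_1),
\end{equation*}
so Theorem \ref{thrm: strong convexity weak version} directly delivers the weaker statement $\var_\rho(\phi_1 - \phi_0) \leq C [D_{\K_\rho}(\psi_1;\psi_0) + D_{\K_\rho}(\psi_0;\psi_1)]$, and \eqref{eq: stronger inequality convex support} amounts to eliminating the symmetric term. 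This refinement is non-trivial: the Bregman divergence of a convex functional is asymmetric, and in general $D_{\K_\rho}(\psi_0;\psi_1)$ is not controlled by $D_{\K_\rho}(\psi_1;\psi_0)$, so the strong version cannot be deduced from the weak one by any soft comparison.

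My plan is to revisit the proof of Theorem \ref{thrm: strong convexity weak version} and extract the one-sided estimate, using the following finer representation of $e_1$. Since optimal potentials for the quadratic cost are $c$-concave, $\psi_1$ is convex lower-semicontinuous and $\psi_1 = \phi_1^*$. Letting $z_x := T_1^{-1}(T_0(x)) \in \X$ denote the unique preimage of $T_0(x)$ under $T_1$, the supremum defining $\psi_1(T_0(x))$ is attained at $x' = z_x$, giving
\begin{equation*}
    e_1(x) = \phi_1(x) - \phi_1(z_x) - T_1(z_x)\cdot(x - z_x) = D_{\phi_1}(x, z_x),
\end{equation*}
the Bregman divergence of the Brenier potential $\phi_1$ itself, with no dependence on $\phi_0$. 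The idea is to rerun the argument underlying Theorem \ref{thrm: strong convexity weak version} while invoking cyclical monotonicity only of the plan associated with $\psi_1$, producing an integrated bound purely in terms of $\int D_{\phi_1}(x, z_x) \di\rho = \int e_1 \di\rho$.

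The main obstacle is closing this asymmetric estimate. The Poincar\'e--Wirtinger inequality on the convex set $Q$ reduces the variance to $\int |T_1(x) - T_1(z_x)|^2 \di\rho$, which must in turn be bounded by $\int D_{\phi_1}(x, z_x) \di\rho$. Such a bound cannot hold pointwise without strong convexity of $\phi_1$; it must be obtained by an integrated argument, exploiting the monotonicity identity
\begin{equation*}
    D_{\phi_1}(x, z_x) + D_{\phi_1}(z_x, x) = (T_1(x) - T_1(z_x)) \cdot (x - z_x),
\end{equation*}
together with a change-of-variable estimate for the rearrangement $x \mapsto z_x = (T_1^{-1} \circ T_0)(x)$, whose distortion is controlled via the density bounds $m_\rho \leq \rho \leq M_\rho$. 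The oscillation $(M_\phi - m_\phi)$ and the compactness of $\Y$ enter via a Cauchy--Schwarz step converting the linear bound $|T_1(x) - T_1(z_x)|$ into a quadratic one. Executed carefully, this mirrors the gluing strategy of \cite{letrouit2024gluing} in an asymmetric form involving only $\psi_1$, yielding the constant $C_{\rho, \Y}$ claimed in \eqref{eq: stronger inequality convex support}.
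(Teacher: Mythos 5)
The paper does not prove this theorem itself---it cites \cite{carlier2024barycentres} and \cite{delalande2023quantitative}---so I am assessing your sketch on its own merits and against those proofs. Your opening observations are correct: the right-hand side of \eqref{eq: stronger inequality convex support} is the one-sided Bregman divergence $D_{\K_\rho}(\psi_1;\psi_0)=\int_\X e_1\,\di\rho$ with $e_1(x)=\psi_1^*(x)+\psi_1(T_0(x))-\langle T_0(x)\,|\,x\rangle\geq 0$, the right-hand side of Theorem~\ref{thrm: strong convexity weak version} is its symmetrisation, and, after replacing the $\psi_i$ by their convex l.s.c.\ envelopes (which only decreases the right-hand side, so is harmless), one has $e_1(x)=D_{\phi_1}(x,z_x)$ for any $z_x\in\partial\psi_1(T_0(x))$. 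That framing is genuinely useful. But from there on this is a sketch with an acknowledged gap, and the gap is not one that can be filled along the proposed lines.

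The route you propose---Poincar\'e--Wirtinger to bound $\var_\rho(\phi_1-\phi_0)$ by $\int_\X|T_1-T_0|^2\,\di\rho$, then control the latter by $D_{\K_\rho}(\psi_1;\psi_0)=\int D_{\phi_1}(x,z_x)\,\di\rho$---is blocked because the intermediate inequality $\int|T_1-T_0|^2\,\di\rho\leq C_{\rho,\Y}\,D_{\K_\rho}(\psi_1;\psi_0)$ is \emph{false}. Take $\rho$ the normalised Lebesgue measure on the unit disc, $\Y=\overline{B}_1$, $\psi_0(y)=M(1-|y_1|)$ for a fixed large $M$, and $\psi_\theta(y)=\psi_0(R_\theta^{-1}y)$. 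Then $T_0=\nabla\psi_0^*$ has a jump of norm $\approx 2$ across $\{x_1=0\}$ and $T_\theta$ is its rotation, so $\int|T_\theta-T_0|^2\,\di\rho\sim|\theta|$, the wedge between the two jump lines dominating; yet $\K_\rho(\psi_\theta)=\K_\rho(\psi_0)$ by rotational invariance of $\rho$, and a direct expansion gives $\langle\psi_\theta-\psi_0\,|\,(\nabla\psi_0^*)_\#\rho\rangle\sim M\theta^2$, so $D_{\K_\rho}(\psi_\theta;\psi_0)\sim\theta^2$. No constant gives $|\theta|\lesssim\theta^2$ as $\theta\to 0$. More to the point, your intermediate claim combined with Kantorovich--Rubinstein would yield $\|T_1-T_0\|_{L^2(\rho)}^2\lesssim W_1(\mu_0,\mu_1)$, the conjectured sharp H\"older-$1/2$ stability of Brenier maps, strictly stronger than Corollary~\ref{corr: map stability source convex}; that alone should ring alarm bells. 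The ``integrated argument / change of variables / Cauchy--Schwarz'' you gesture at cannot rescue it---the counterexample is itself an integrated statement against $\rho$. The actual proofs in the cited references do not pass through a Poincar\'e inequality on $Q$ at all: they study the affine path $\psi_t=(1-t)\psi_0+t\psi_1$, bound the second derivative of $t\mapsto\K_\rho(\psi_t)$ from below by a variance term via a Pr\'ekopa/log-concavity argument, and integrate twice. Working directly with the Taylor remainder of the convex functional $\K_\rho$ along a segment is precisely what produces the one-sided Bregman form of \eqref{eq: stronger inequality convex support}, with no symmetrisation ever introduced and hence nothing to undo.
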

In \cite{letrouit2024gluing, kitagawa2025stability}, the authors treat stability for a much larger class of absolutely continuous source measures $\rho$. One decomposes the source domain into cubes, upon each of which we can apply Theorem \ref{theorem: strong convexity convex support}. Then, under regularity assumptions on $\rho$, it is shown that the sum of stability errors locally over each cube can be combined to give a stability result globally across the domain. These techniques were applied to deduce the stability of maps and potentials for more general source measures. However, the techniques here did not directly establish \eqref{eq: stronger inequality convex support} for the same larger class of $\rho$. Using the same glueing techniques, we obtain a strong convexity inequality \eqref{eq: stronger inequality convex support} for John domains, which, to our knowledge, is new in this generality. We first review the glueing theory used in \cite{letrouit2024gluing}.
\begin{definition}[Boman Chain condition]\cite{boman1982lp}
A probability measure $\rho$ on an open set $\X \subset \R^d$ satisfies the Boman chain condition with parameters $A, B, C > 1$ if there exists a covering $\F$ of $\X$ of open cubes $Q \in \F$ such that
\begin{itemize}
    \item for $\sigma = \min(10/9, (d+1)/d)$ and for any $x \in \R^d$,
    \begin{equation}
        \label{eq: uniform upper bound on bowman cube overlap}
        \sum_{Q \in \F} \chi_{\sigma Q}(x) \leq A \chi_{\X}(x).
    \end{equation}
    \item For some fixed cube $Q_0$ in $\F$, called the central cube, and for every $Q \in \F$, there exists a chain $Q_0, Q_1,..., Q_N= Q$ of distinct cubes from $\F$ such that for any $j \in \{0,...,N-1\}$,
    \begin{equation}
        Q \subset B Q_j
    \end{equation}
    Where $BQ_j$ is the cube with the same centre as $Q_j$ and side length multiplied by $B$.
    \item Consecutive cubes of the above chain rho-verlap quantitatively: for any $j \in \{0,...,N-1\}$,
    \begin{equation}
        \rho(Q_j \cap Q_{j+1}) \geq C^{-1} \max(\rho(Q_j), \rho(Q_{j+1})).
    \end{equation}
\end{itemize} 
\end{definition}
Note that the length $N$ of the chain can depend on $Q \subset \F$ and need not be uniformly bounded over $Q \in \F$. The following lemma is the crux of the argument, providing a way to combine local stability on each cube of a Boman decomposition into a global stability result.
\begin{lemma}\textnormal{\cite[Lemma 3.3]{letrouit2024gluing}}
\label{lemma-gluingvars-letrouit}
Let $\rho$ be a probability density over a domain $\X$ satisfying the Boman chain condition for some covering $\mathcal{F}$ and some $A, B, C > 1$. Assume moreover that there exists $D >0$ such that
\begin{equation}
\label{scale regularity assumption}
    \forall Q \in \F, \quad \rho(5B\sqrt{d}) \leq D \rho(Q).
\end{equation}
Then settting $\Tilde\rho_Q = \frac{1}{\rho(Q)}\rho|_Q$, for any continuous function $f$ on $\X$, it holds
\begin{equation*}
    \var_\rho(f) \leq 200 A^2 C D^3 \sum_{Q \in \F} \rho(Q) \var_{\Tilde{\rho}_Q}(f).
\end{equation*}
\end{lemma}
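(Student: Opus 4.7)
The approach is to compare $\var_\rho(f)$ to a weighted sum of local averages $f_Q = \int f \di \Tilde\rho_Q$ by chaining along the Boman structure. Since $\var_\rho(f) = \inf_{c \in \R}\int_\X (f-c)^2 \di \rho \leq \int_\X (f - f_{Q_0})^2 \di \rho$, and $\X \subset \bigcup_{Q \in \F} Q$, the elementary inequality $(a-c)^2 \leq 2(a-b)^2 + 2(b-c)^2$ with $b = f_Q$ yields
\begin{equation*}
    \var_\rho(f) \leq \sum_{Q \in \F} \int_Q (f - f_{Q_0})^2 \di \rho \leq 2\sum_{Q \in \F} \rho(Q)\var_{\Tilde\rho_Q}(f) + 2\sum_{Q \in \F} \rho(Q)(f_Q - f_{Q_0})^2.
\end{equation*}
The first sum is already of the desired form, so the task reduces to dominating the \emph{jump sum} $\sum_Q \rho(Q)(f_Q - f_{Q_0})^2$ by a constant times the local-variance sum.

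For a neighbor estimate, given consecutive cubes $Q_j, Q_{j+1}$ in a Boman chain, write $S = Q_j \cap Q_{j+1}$ and $m = \int_S f \di \Tilde\rho_S$. Cauchy-Schwarz together with $S \subset Q_j$ and the overlap hypothesis $\rho(S) \geq C^{-1}\rho(Q_j)$ give
\begin{equation*}
    (f_{Q_j} - m)^2 = \left(\frac{1}{\rho(S)}\int_S (f - f_{Q_j})\di \rho\right)^2 \leq \frac{1}{\rho(S)}\int_{Q_j}(f - f_{Q_j})^2 \di \rho \leq C\var_{\Tilde\rho_{Q_j}}(f),
\end{equation*}
and symmetrically for $Q_{j+1}$. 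The triangle inequality then delivers the neighbor bound
\begin{equation*}
    (f_{Q_j} - f_{Q_{j+1}})^2 \leq 2C\bigl[\var_{\Tilde\rho_{Q_j}}(f) + \var_{\Tilde\rho_{Q_{j+1}}}(f)\bigr].
\end{equation*}

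To propagate this estimate along the full chain $Q_0, Q_1, \ldots, Q_{N_Q} = Q$ without paying a factor $N_Q$, I would apply a weighted Cauchy-Schwarz inequality using weights tied to the measures $\rho(Q_j)$: the containment $Q \subset BQ_j$ forces every chain member $Q_j$ into a fixed dilate of $Q$, so the hypotheses on the cover (bounded overlap and scale regularity) control the resulting weight sum in terms of $\rho(Q)$ alone. Substituting the neighbor bound into this telescoping identity, multiplying by $\rho(Q)$, summing over $Q \in \F$, and exchanging the order of summation reformulates the problem as controlling, for each $R \in \F$, the cumulative weight $\sum_{Q : R \in \mathrm{chain}(Q)} \rho(Q) \cdot (\text{weight of } R \text{ in the chain of } Q)$ by a constant multiple of $\rho(R)$.

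The main obstacle is exactly this final combinatorial bookkeeping: the chains are allowed to be arbitrarily long and a given cube $R$ can appear in the chains of many cubes $Q$. The key idea is to invert the containment $Q \subset BQ_j$, which constrains each $Q$ containing $R$ in its chain to lie in a fixed dilate of $R$, stratify those $Q$ by their scale relative to $R$, control the multiplicity at each scale by the bounded-overlap condition \eqref{eq: uniform upper bound on bowman cube overlap}, and sum across scales using the scale regularity $\rho(5B\sqrt{d}Q) \leq D\rho(Q)$. The resulting constant $200A^2 C D^3$ in the statement reflects the double use of bounded overlap (once for the cover decomposition, once for the multiplicity count) and the triple use of scale regularity (to locate chain members, to bound dilated masses, and to sum across scales), with the $C$ factor inherited from the single application of the overlap hypothesis in the neighbor estimate.
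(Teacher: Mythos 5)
Your first two steps are correct and in the right spirit: the decomposition $\var_\rho(f)\le 2\sum_Q\rho(Q)\var_{\Tilde\rho_Q}(f)+2\sum_Q\rho(Q)(f_Q-f_{Q_0})^2$ obtained by inserting the local means $f_Q$ and using the covering, and the neighbour estimate $(f_{Q_j}-f_{Q_{j+1}})^2\le 2C[\var_{\Tilde\rho_{Q_j}}(f)+\var_{\Tilde\rho_{Q_{j+1}}}(f)]$ via Cauchy--Schwarz on the intersection $S=Q_j\cap Q_{j+1}$ with the quantitative overlap, are both correct and are the standard opening of a Boman-chain Poincar\'e argument. Note that the present paper cites this statement from \cite[Lemma 3.3]{letrouit2024gluing} without reproducing a proof, so there is no ``paper's own proof'' to compare against; I am evaluating the proposal on its own terms.

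The genuine gap is that the chaining step is not actually carried out --- it is only described. You write ``I would apply a weighted Cauchy--Schwarz'' and ``the main obstacle is exactly this final combinatorial bookkeeping'' and then sketch, at the level of intent, how one would stratify by scale and count multiplicities. But no choice of weights is exhibited, no explicit telescoping estimate is proved, and the multiplicity count using \eqref{eq: uniform upper bound on bowman cube overlap} and the scale regularity is never executed. This is precisely the technically delicate part of the lemma (the Boman chains are not uniformly bounded in length, so summing the neighbour bounds naively gives an $N_Q$ factor that must be absorbed by a weighted argument), and without it the conclusion, and in particular the explicit constant $200A^2CD^3$, is not established. There is also a direction error in the sketch: you state that ``the containment $Q\subset BQ_j$ forces every chain member $Q_j$ into a fixed dilate of $Q$.'' It does not; it forces $Q$ into a dilate of each chain member $Q_j$, which is the opposite inclusion. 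Combined with the scale-regularity hypothesis $\rho(5B\sqrt d\,Q_j)\le D\rho(Q_j)$, this yields $\rho(Q)\le D\rho(Q_j)$ for every $Q_j$ on the chain of $Q$ --- this lower bound on the mass of chain members is what makes the weighted Cauchy--Schwarz and the subsequent reordering of the double sum over $(Q,R)$ with $R$ in the chain of $Q$ tractable, and it is this inclusion that must be ``inverted'' to count how many $Q$'s can have a given $R$ on their chain. The sketch in your final paragraph gets the inclusion right, which makes the earlier statement look like a slip rather than a misconception, but neither version completes the counting. As it stands, the proof does not reach the stated inequality.
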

\begin{theorem}[Strong convexity for more general source]
\label{thrm: general strong convexity estimate}
Let $\rho$ be a probability density over a domain $\X \subset \R^d$ satisfying the Boman chain condition for some covering $\F$ and $A, B, C > 1$. Assume moreover that \eqref{scale regularity assumption} holds for some $D > 0$ (w.l.o.g. we assume $D > 1$) and there exists $E>0$ such that
\begin{equation}
\label{eq: uniform comparability of densities over cubes}
    \sup_{Q \in \F} \frac{M_{\rho_Q}}{m_{\rho_{Q}}} \leq E < +\infty.
\end{equation}
Then for any $\Y \subset \R^d$ compact, there exits $C_{\rho, \Y}>0$ such that for all $\psi_0, \psi_1 \in C_b(\Y)$,
\begin{equation}
\label{eq: strong inequality}
    \var_\rho(\psi_1^* - \psi_0^*) \leq C \bigg(\K_\rho(\psi_1) - \K_\rho(\psi_0) + \langle \psi_1 - \psi_0 | (\nabla\psi_0^*)_\# \rho\ \rangle \bigg).
\end{equation}
\end{theorem}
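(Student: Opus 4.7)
The plan is to localise the global strong convexity estimate cube-by-cube, apply Theorem \ref{theorem: strong convexity convex support} on each cube of the Boman cover (which is legitimate since cubes are convex with non-empty interior), and then reassemble both sides using Lemma \ref{lemma-gluingvars-letrouit} on the left and the finite-overlap property \eqref{eq: uniform upper bound on bowman cube overlap} on the right.

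First, I would apply Lemma \ref{lemma-gluingvars-letrouit} to $f = \psi_1^* - \psi_0^* \in C(\X)$, which immediately bounds $\var_\rho(\psi_1^* - \psi_0^*)$ by $200 A^2 C D^3 \sum_{Q \in \F} \rho(Q) \var_{\tilde\rho_Q}(\psi_1^* - \psi_0^*)$. On each cube $Q \in \F$, Theorem \ref{theorem: strong convexity convex support} applied to $\tilde\rho_Q$ yields a local Bregman-type bound
\begin{equation*}
\var_{\tilde\rho_Q}(\psi_1^* - \psi_0^*) \leq C_Q \bigl( \K_{\tilde\rho_Q}(\psi_1) - \K_{\tilde\rho_Q}(\psi_0) + \langle \psi_1 - \psi_0 | (\nabla\psi_0^*)_\# \tilde\rho_Q\rangle\bigr).
\end{equation*}
A crucial sub-step is to verify that $C_Q$ admits a uniform bound over $Q \in \F$. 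Tracing the dependencies as in the proof of Corollary \ref{corr: Potential stability source convex}, $C_Q$ depends on $\tilde\rho_Q$ only through $M_{\tilde\rho_Q}/m_{\tilde\rho_Q}$, controlled by $E$ thanks to \eqref{eq: uniform comparability of densities over cubes}, and through the oscillation of $\psi_i^*$ on $Q$, itself bounded by $(\diam \X)(\diam \Y)$ since each $\psi_i^*$ is $\diam\Y$-Lipschitz (after a harmless translation of $\Y$). This gives a uniform $C^* = C^*(E, \X, \Y)$ replacing every $C_Q$.

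The main obstacle, and the only genuinely delicate step, is reassembling the right hand side: the naive sum weighted by $\rho(Q)$ involves a cover rather than a partition, and neither $\K_{\tilde\rho_Q}$ nor the duality pairing is a priori additive in $Q$. Unwinding definitions, the bracketed quantity multiplied by $\rho(Q)$ equals $\int_Q g\,\di\rho$, where
\begin{equation*}
g(x) := (\psi_1^* - \psi_0^*)(x) + (\psi_1 - \psi_0)(\nabla \psi_0^*(x)).
\end{equation*}
By the very definition of the Legendre transform, $\psi_1^*(x) \geq \langle x | \nabla\psi_0^*(x)\rangle - \psi_1(\nabla\psi_0^*(x))$, whereas equality holds with $\psi_0$ in place of $\psi_1$; hence $g \geq 0$ pointwise on $\X$. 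This pointwise sign is exactly what makes finite multiplicity usable: since $\sigma \geq 1$, \eqref{eq: uniform upper bound on bowman cube overlap} yields $\sum_{Q \in \F} \chi_Q \leq A\chi_\X$, so
\begin{equation*}
\sum_{Q \in \F}\int_Q g \,\di \rho = \int_\X \Bigl(\sum_{Q\in\F}\chi_Q\Bigr) g\,\di\rho \leq A\int_\X g\,\di\rho = A\bigl(\K_\rho(\psi_1) - \K_\rho(\psi_0) + \langle \psi_1 - \psi_0 | (\nabla\psi_0^*)_\# \rho\rangle\bigr).
\end{equation*}

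Chaining the three estimates produces \eqref{eq: strong inequality} with an explicit constant of the form $C_{\rho,\Y} \lesssim A^3 C D^3\, C^*$, depending on $\rho$ only through the Boman parameters $A, B, C, D$ and the density ratio $E$. Beyond the careful tracking of constants needed to justify the uniform bound on $C_Q$, no further subtlety is expected: the proof is essentially a clean application of the local-to-global glueing philosophy from \cite{letrouit2024gluing}, made possible here by the pointwise nonnegativity of the Bregman integrand.
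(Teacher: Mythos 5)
Your proof is correct and follows the same local–to–global strategy as the paper: apply Lemma \ref{lemma-gluingvars-letrouit} on the left, Theorem \ref{theorem: strong convexity convex support} on each cube, check that $C_Q$ is uniformly bounded via \eqref{eq: uniform comparability of densities over cubes} and the Lipschitz bound on $\psi_i^*$, and reassemble the right using finite overlap. The only genuine difference is in the reassembly step. You observe that $\rho(Q)$ times the bracketed Bregman quantity equals $\int_Q g\,\di\rho$ with $g(x) = (\psi_1^*-\psi_0^*)(x) + (\psi_1-\psi_0)(\nabla\psi_0^*(x)) \ge 0$ $\rho$-a.e.\ (the Legendre inequality at the argmax), and then sum using $\sum_{Q}\chi_Q \le A\chi_\X$. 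The paper instead introduces the partition $\F'$ of $\X$ by overlap pattern, invokes Proposition \ref{prop: subdiff characterisation} (convexity of $\K_{\rho|_P}$, so the Bregman divergence is nonnegative) on each partition piece $P$, and then bounds the $\F$-sum by $A$ times the $\F'$-sum. These are the same observation expressed at different levels: yours is pointwise, the paper's is at the level of measures. Your version is a bit more economical, bypassing both the auxiliary partition and the subdifferential characterisation, while the paper's stays within the convex-duality framework already developed. Either way the final constant is $\lesssim A^3 C D^3 C^*$ with $C^* = C^*(E,\X,\Y,d)$.
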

\begin{proof}
For each $Q \in \F$ set $\Tilde{\rho}_Q = \frac{1}{\rho(Q)} \rho_{|Q}$, then Theorem \ref{eq: stronger inequality convex support} tells us
\begin{equation}
\label{eq local strong variance on cubes}
    \var_{\Tilde{\rho}_Q}(\psi_1^* - \psi_0^*) \leq C \bigg(\K_{\Tilde{\rho}_Q}(\psi_1) - \K_{\Tilde{\rho}_Q}(\psi_0) + \langle \psi_1 - \psi_0 | (\nabla\psi_0^*)_\#\Tilde{\rho}_Q \rangle \bigg).
\end{equation}
where $C_1 = e(d+1) 2^{d+1} E^2 \diam(\Y) \diam(\X)$.
Combining Lemma \ref{lemma-gluingvars-letrouit} with \eqref{eq local strong variance on cubes}, we get
\begin{align*}
    \var_\rho(\psi_1^* - \psi_0^*) \leq& 200 A^2 C D^3 \sum_{Q \in \F} \rho(Q) \var_{\Tilde{\rho}_Q}(\psi_1^* - \psi_0^*)\\
    \leq& 200 A^2 C D^3 C_1 \sum_{Q \in \F} \bigg(\K_{\rho_{|Q}}(\psi_1) - \K_{\rho_{|Q}}(\psi_0) + \langle \psi_1 - \psi_0 | (\nabla\psi_0^*)_\#\rho_{|Q} \rangle\bigg).
\end{align*}
Now define a partition $\F'$ of $\X$ such that $x, x'$ belong to the same element $P \in \F'$ if and only if they belong to the same elements of $\F$. Proposition \ref{prop: subdiff characterisation} applied to $\K_{\rho_{|P}}$ tells us that
\begin{equation}
\K_{\rho_{|P}}(\psi_1) - \K_{\rho_{|P}}(\psi_0) + \langle \psi_1 - \psi_0 | (\nabla\psi_0^*)_\# \rho_{|P} \rangle \geq 0 \quad \forall P \in \F'. 
\end{equation}
It follows that
\begin{align*}
    \var_\rho(\psi_1^* - \psi_0^*) \leq& 200 A^3 C D^3 C_1 \sum_{P \in \F'} (\K_{\rho_{|P}}(\psi_1) - \K_{\rho_{|P}}(\psi_0) + \langle \psi_1 - \psi_0 | (\nabla\psi_0^*)_\# \rho_{|P})\\
    =& 200 A^3 C D^3 C_1 \bigg(\K_\rho(\psi_1) - \K_\rho(\psi_0) + \langle \psi_1 - \psi_0 | (\nabla\psi_0^*)_\# \rho\ \rangle \bigg).
\end{align*}
Where the first inequality is due to the chain condition of no more than $A$ cubes simultaneously overlapping by \eqref{eq: uniform upper bound on bowman cube overlap} and the second equality is due to $P$ being a partition of $\X$.
\end{proof}

In particular, since the uniform measure on a John domain satisfies the Bowman chain condition (see \cite[Lemma 2.1]{boman1982lp}), we have the following.
\begin{theorem}[Strong convexity of the Kantorovich function on John domains]
    \label{thrm: John domain stronger inequality}

    Let $\X\subset \R^d$ be a John domain with non-empty interior. Let $\rho$ be a probability density on $\X$ bounded above and below by positive constants. Then for every compact set $\Y$ there exists a constant $C_{\rho, \Y} >0$ such that for all $\psi_0, \psi_1 \in C_b(\Y)$,
    \begin{equation*}
        \var_\rho(\psi_1^* - \psi_0^*) \leq C \bigg(\K_\rho(\psi_1) -\K_\rho(\psi_0) + \langle \psi_1 - \psi_0 | (\nabla\psi_0^*)_\# \rho \rangle \bigg).
    \end{equation*}
\end{theorem}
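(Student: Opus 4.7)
The plan is to apply Theorem \ref{thrm: general strong convexity estimate} essentially as a black box, so the task reduces to checking its three hypotheses for $\rho$ on the John domain $\X$. The heavy lifting has already been done: the glueing lemma and the local strong convexity estimate on cubes are packaged into Theorem \ref{thrm: general strong convexity estimate}, and the fact that John domains admit a Whitney-type chain decomposition is classical, going back to Boman \cite{boman1982lp}.

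\emph{Step 1 (transfer of the Boman chain condition).} First I would invoke \cite[Lemma 2.1]{boman1982lp}, which gives a covering $\F$ of $\X$ by open cubes $Q$ and constants $A, B, C_0 > 1$ such that the uniform (Lebesgue) measure on $\X$ satisfies the Boman chain condition for $\F$. The two geometric conditions, the bounded overlap \eqref{eq: uniform upper bound on bowman cube overlap} and the containment $Q \subset B Q_j$ along chains, are measure-independent. Writing $\lambda \leq \rho \leq \Lambda$ for the pointwise bounds on $\rho$, the Lebesgue overlap $|Q_j \cap Q_{j+1}| \geq C_0^{-1} \max(|Q_j|, |Q_{j+1}|)$ immediately upgrades to
\begin{equation*}
    \rho(Q_j \cap Q_{j+1}) \;\geq\; \lambda\,|Q_j \cap Q_{j+1}| \;\geq\; \frac{\lambda}{C_0 \Lambda} \max(\rho(Q_j), \rho(Q_{j+1})),
\end{equation*}
so $\rho$ satisfies the Boman chain condition with constant $C := C_0 \Lambda/\lambda$.

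\emph{Step 2 (scale regularity and uniform comparability).} For any $Q \in \F$, since $\rho$ is supported in $\X$ and uniformly bounded above,
\begin{equation*}
    \rho(5B\sqrt{d}\, Q) \;\leq\; \Lambda (5B\sqrt{d})^{d} |Q| \;\leq\; \frac{\Lambda}{\lambda}(5B\sqrt{d})^{d}\, \rho(Q),
\end{equation*}
yielding \eqref{scale regularity assumption} with $D = (\Lambda/\lambda)(5B\sqrt{d})^{d}$. For the density comparability, $\Tilde{\rho}_Q = \rho(Q)^{-1}\rho_{|Q}$ inherits the ratio $M_{\Tilde{\rho}_Q}/m_{\Tilde{\rho}_Q} \leq \Lambda/\lambda$ uniformly in $Q$, so \eqref{eq: uniform comparability of densities over cubes} holds with $E = \Lambda/\lambda$.

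\emph{Step 3 (conclusion).} Applying Theorem \ref{thrm: general strong convexity estimate} with the parameters $A, B, C, D, E$ above directly yields \eqref{eq: strong inequality} with a constant depending only on $\lambda, \Lambda, \diam \X, \diam \Y$, and the Boman parameters of $\X$, i.e.\ on $\rho$ and $\Y$ as required.

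There is really no substantive obstacle to the argument: the only thing to be careful about is that Boman's theorem is traditionally stated for Lebesgue measure, so one must observe (as in Step 1) that the chain condition is stable under passing from Lebesgue to a uniformly comparable density $\rho$. Everything else is a direct application of the machinery developed in the previous section.
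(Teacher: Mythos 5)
Your proposal is correct and follows exactly the paper's intended route: the paper's own justification is the single remark that "the uniform measure on a John domain satisfies the Boman chain condition (see Boman, Lemma 2.1)," and then Theorem~\ref{thrm: general strong convexity estimate} is applied. You have merely spelled out what is left implicit — namely, that all three hypotheses of Theorem~\ref{thrm: general strong convexity estimate} (the chain condition \eqref{eq: uniform upper bound on bowman cube overlap} for $\rho$ rather than just Lebesgue, the scale regularity \eqref{scale regularity assumption}, and the ratio bound \eqref{eq: uniform comparability of densities over cubes}) do hold with explicit constants once $\lambda \le \rho \le \Lambda$, which is a straightforward but necessary check that the paper elides.
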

\chapter{The discrete transport problem}
\label{ch: discrete transport problem}
We now shift to the main focus of this report, the fully discrete optimal transport problem. Such problems have a natural interpretation as a linear programming problem, which we will exploit heavily.

\section{Primal and dual polyhedra}

In the special case of fully discrete optimal transport, both dual and primal problems are linear programming problems. Specifically, fix weights $\alpha \in \R^M$, $\beta \in \R^N$ strictly positive probability vectors. Given $X = (x_1, ..., x_M) \in (\R^d)^M$ and $Y = (y_1,...,y_N) \in (\R^d)^N$, we set
\begin{equation}
\label{eq: rho x and mu y definition}
   \rho_X = \sum_{i =1}^M \alpha_i \delta_{x_i} \quad \text{ and } \quad \mu_Y = \sum_{j=1}^N \beta_j\delta_{y_j}.
\end{equation}
We define a cost matrix $C(X, Y) \in \R^{M \times N}$ by $C(X, Y)_{ij} = c(x_i, y_j)$, and also the convex polyhedron of couplings
\begin{equation*}
    \Pi(\alpha, \beta) := \left\{ \hat\gamma \in \R^{M \times N} : \hat\gamma_{ij} \geq 0, \sum_{i = 1}^M \hat\gamma_{ij} = \beta_j \text{ and } \sum_{j = 1}^N \hat\gamma_{ij} = \alpha_i \text{ for } i=1,...,M; j=1,...,N \right\}.
\end{equation*}
Note here that we use $\hat \gamma$ to refer to couplings living in the polyhedron $\Pi(\alpha, \beta) \subset \R^{M \times N}$, while we will reserve $\gamma$ for plans as probability measures on $\R^d \times \R^d$. The primal transport problem \eqref{intro: c OT problem} between $\rho_X$ and $\mu_Y$ is equivalent to
\begin{equation}
\label{eq: C(X, Y) linear programming problem}
    \inf_{\hat\gamma \in \Pi(\alpha, \beta)} \langle C(X, Y) | \hat\gamma \rangle.
\end{equation}
Given $C \in \R^{M \times N}$, we define the dual polyhedron
\begin{equation*}
    D(C) : = \left\{ (\phi, \psi) \in \R^{M + N} : \phi_i + \psi_j \leq C_{ij} \text{ for all } i=1,...M, j = 1,...N \right\}.
\end{equation*}
Then the dual transport problem \eqref{intro: c dual problem} between $\rho_X$ and $\mu_Y$ is equivalent to
\begin{equation}
\label{eq: C(X, Y) dual linear programming problem}
    \sup_{\phi, \psi \in D(C(X, Y))} \langle \phi | \alpha \rangle + \langle \psi | \beta \rangle.
\end{equation}
Uniqueness of solutions in linear programming occurs when the cost vector to be maximised (here $-C$ for the primal or $(\alpha, \beta)$ for the dual) does not point towards a face of the polyhedron to which it is orthogonal. The unique solution is then one of the vertices (extreme points) of the polyhedron. This is illustrated in Figure \ref{fig:uniqueness linear program}. Thus, in the discrete case, uniqueness of the primal and dual problems is understood by characterising the extreme points of the polyhedra $\Pi(\alpha, \beta)$ and $D(C)$.
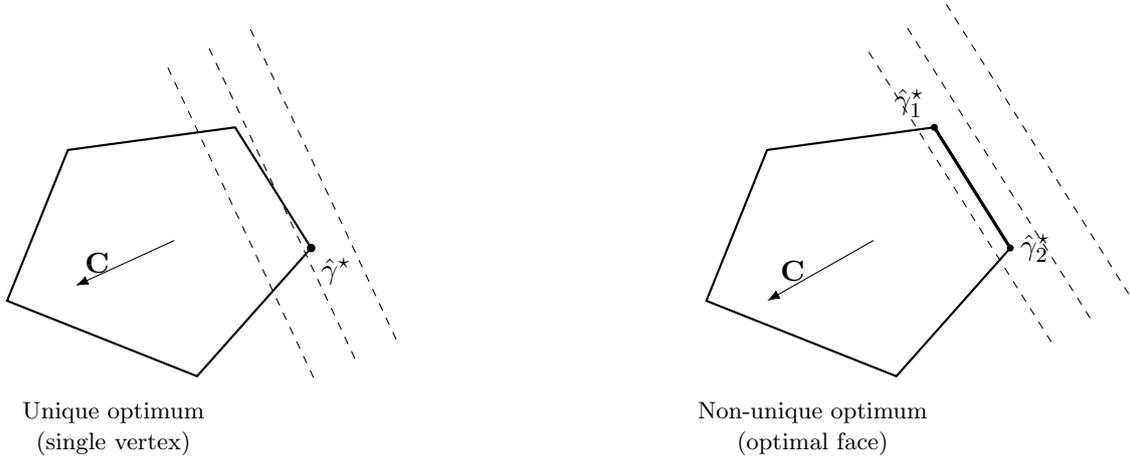
\begin{figure}[ht]
    \centering
\begin{tikzpicture}[scale=1, >=Latex]

\def\A{(-2.0,-0.5)}
\def\B{(-1.2,1.5)}
\def\C{(1.0,1.8)}
\def\D{(2.0,0.2)}
\def\E{(0.5,-1.5)}

\begin{scope}[shift={(-4.6,0)}]
  \draw[thick] \A -- \B -- \C -- \D -- \E -- cycle;

  \coordinate (cStart1) at (0.2,0.3);
  \coordinate (cEnd1)   at ($(cStart1)+(-1.3,-0.6)$);
  \draw[->] (cStart1) -- (cEnd1);
  \node[left=2pt] at ($(cStart1)!0.5!(cEnd1)$) {$\mathbf C$};

  \begin{scope}[rotate=295]
    \draw[dashed] (-2.3, 1.2) -- (2.3, 1.2);
    \draw[dashed] (-2.3, 1.8) -- (2.3, 1.8);
    \draw[dashed] (-2.3, 2.4) -- (2.3, 2.4);
  \end{scope}

  \fill \D circle (1.6pt) node[below right] {$\hat\gamma^\star$};
  
  \node[align=center] at (-0.6,-2.2) {\footnotesize Unique optimum\\[-2pt]\footnotesize (single vertex)};
\end{scope}

\begin{scope}[shift={(4.6,0)}]
  \draw[thick] \A -- \B -- \C -- \D -- \E -- cycle;

  \coordinate (cStart2) at (0.2,0.3);
  \coordinate (cEnd2)   at ($(cStart2)+(-1.4,-0.8)$);
  \draw[->] (cStart2) -- (cEnd2);
  \node[left=2pt] at ($(cStart2)!0.5!(cEnd2)$) {$\mathbf C$};

  \begin{scope}[rotate=302]
    \draw[dashed] (-2.3, 1.6) -- (2.3, 1.6);
    \draw[dashed] (-2.3, 2.2) -- (2.3, 2.2);
    \draw[dashed] (-2.3, 2.8) -- (2.3, 2.8);
  \end{scope}

  \draw[line width=1.2pt] \C -- \D;
  \fill \C circle (1.3pt) node[above left] {$\hat{\gamma}_1^\star$};
  \fill \D circle (1.3pt) node[right] {$\hat{\gamma}_2^\star$};
  
  \node[align=center] at (-0.6,-2.2) {\footnotesize Non-unique optimum\\[-2pt]\footnotesize (optimal face)};
\end{scope}

\end{tikzpicture}
    \caption{Uniqueness vs non-uniqueness of the minimisers of $\langle \hat{\gamma} |C \rangle$ depending on the direction of $-C$.}
    \label{fig:uniqueness linear program}
\end{figure}
 
The following lemma, due to \cite{klee1967facets}, characterises the extreme points of the primal transport polyhedron $\Pi(\alpha, \beta) \subset \R^{M \times N}$. This serves as a generalisation of the Birkhoff theorem for bi-stochastic matrices, which corresponds to the case $N = M$ and $\alpha, \beta$ uniform probability vectors. The second half of the proof presented here can be found in \cite[Chapter 8]{brualdi2006combinatorial}, the first half is formulated with the results of Chapter \ref{ch: pertubations} in mind.
\begin{lemma} \textnormal{\cite{klee1967facets}}
    \label{lemma: characterisation of extreme points of primal polyhedra}
    Given $\hat\gamma \in \Pi(\alpha, \beta)$, define the bipartite graph $G(\hat\gamma) = (V, E(\hat\gamma))$ of its support, with $V = (x_i)_{i=1}^M \cup (y_j)_{j=1}^N$ and
    \begin{equation}
        (x_i, y_j) \in E(\hat\gamma) \iff \hat\gamma_{ij} >0.
    \end{equation}
    Then $\hat\gamma \in \Pi(\alpha, \beta)$ is an extreme point if and only if $G(\hat\gamma)$ contains no cycles (it is a disjoint union of trees, often referred to as a forest).
\end{lemma}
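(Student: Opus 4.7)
The plan is to prove the characterisation by double implication, in both directions exploiting the basic observation that if $\delta \in \R^{M\times N}$ has all row sums and all column sums equal to zero, then $\hat\gamma + t\delta$ lies in $\Pi(\alpha,\beta)$ for every $t$ small enough in absolute value that $\hat\gamma + t\delta$ stays non-negative. Extremality of $\hat\gamma$ thus amounts to the absence of any such non-zero $\delta$ supported on $E(\hat\gamma)$ (where non-negativity is automatic for small $t$ thanks to strict positivity).

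For the direct implication I would argue by contrapositive. Suppose $G(\hat\gamma)$ contains a cycle. Bipartiteness forces its length to be even, say $2k$, so it has the form
\begin{equation*}
(x_{i_1},y_{j_1}),(x_{i_2},y_{j_1}),(x_{i_2},y_{j_2}),\ldots,(x_{i_k},y_{j_k}),(x_{i_1},y_{j_k}).
\end{equation*}
I then define $\delta \in \R^{M\times N}$ to be $+1$ on the odd-indexed edges of this cycle, $-1$ on the even-indexed ones, and $0$ elsewhere. Each $x_{i_s}$ and each $y_{j_s}$ is incident to exactly one $+1$ and one $-1$ edge of the cycle, so $\delta$ has vanishing row and column sums. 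Since $\hat\gamma_{ij}>0$ on the entire cycle, the matrices $\hat\gamma \pm \varepsilon\delta$ stay in $\Pi(\alpha,\beta)$ for $\varepsilon>0$ small, and writing $\hat\gamma = \tfrac12(\hat\gamma+\varepsilon\delta) + \tfrac12(\hat\gamma-\varepsilon\delta)$ exhibits $\hat\gamma$ as a non-trivial convex combination.

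For the converse I would assume $G(\hat\gamma)$ is a forest and that $\hat\gamma = \tfrac12(\hat\gamma_1+\hat\gamma_2)$ with $\hat\gamma_1,\hat\gamma_2 \in \Pi(\alpha,\beta)$. Non-negativity of the $\hat\gamma_s$ combined with $\hat\gamma_{ij}=0$ forces $\hat\gamma_{s,ij}=0$, so both support graphs $G(\hat\gamma_s)$ are subgraphs of the forest $G(\hat\gamma)$. It then suffices to check that on a forest the marginal constraints uniquely determine the entries. I would do this by a peeling argument: since $\alpha,\beta>0$ coordinatewise, no vertex is isolated, so every non-trivial tree component has at least one leaf. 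If $x_i$ is a leaf adjacent only to $y_j$, then the row constraint $\sum_{j'}\hat\gamma_{s,ij'}=\alpha_i$ collapses to the single term $\hat\gamma_{s,ij}=\alpha_i$, pinning this entry. Removing the edge $(x_i,y_j)$ from the forest and replacing the marginal at $y_j$ by $\beta_j-\alpha_i$ produces a smaller instance of the same type; iterating (and treating $y$-leaves symmetrically) forces $\hat\gamma_1 = \hat\gamma_2 = \hat\gamma$.

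No step is technically deep; the main points to be careful about are that the alternating perturbation in the forward direction really has all marginal increments vanishing (which is why bipartiteness of the cycle matters), and that the peeling in the converse terminates correctly — one must verify that after removing a leaf the compatibility $\sum_i\alpha_i = \sum_j\beta_j$ is preserved on each remaining connected piece, so that a leaf always exists and the final edge of each tree is closed off consistently by the last marginal constraint.
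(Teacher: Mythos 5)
Your proposal is correct and takes essentially the same approach as the paper: the contrapositive via an alternating $\pm$ perturbation around a (simple) cycle for the forward direction, and leaf-peeling on the forest to show the marginal constraints uniquely determine every entry for the converse.
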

\begin{proof}
    Assume $\hat\gamma$ is such that $G(\hat\gamma)$ has a cycle. Since no two $x$'s or $y$'s are connected, this is of the form $x_{i_1}, y_{j_1},..., x_{i_n}, y_{j_n}, x_{i_1}$, such that $\hat\gamma_{i_k j_k} >0$ and $\hat\gamma_{i_1 j_n} >0$ and $\hat\gamma_{i_{k+1} j_k} >0$. Here we choose $n \in \N$ to minimise the length of the chain
    \begin{equation}
    \label{eq: the chain fleetwood mac}
        i_1, j_1, ..., i_n, j_n, i_1,
    \end{equation}
    so the cycle is simple (it does not retrace itself). Set
    \begin{equation*}
        \varepsilon : = \min\left(\hat\gamma_{i_1 j_n}, \min_{k=1}^n \hat\gamma_{i_k j_k}, \min_{k=1}^{n-1} \hat\gamma_{i_{k+1} j_k}\right).
    \end{equation*}
    Then $\hat\gamma = \frac{1}{2}\hat\gamma_\varepsilon + \frac{1}{2} \hat\gamma_{-\varepsilon}$ where
    \begin{equation*}
        (\hat\gamma_{\varepsilon})_{ij} := \begin{cases}
            \hat\gamma_{ij} + \varepsilon & i,j \text{ appears in the chain \eqref{eq: the chain fleetwood mac},}\\
            \hat\gamma_{ij} - \varepsilon & j,i \text{ appears in the chain \eqref{eq: the chain fleetwood mac},} \\
            \hat\gamma_{ij} & \text{otherwise}.
        \end{cases}
    \end{equation*}
    By construction, $\hat\gamma_\varepsilon, \hat\gamma_{-\varepsilon} \in \Pi(\alpha, \beta)$. Hence $\hat\gamma$ is not an extreme point, since $\hat\gamma_{\varepsilon}$ and $\hat\gamma_{-\varepsilon}$ are distinct.

    Now assume the induced bipartite graph by $\hat\gamma \in \Pi(\alpha, \beta)$ is a forest. We show that there exists no other $\hat\gamma_0 \in \Pi(\alpha, \beta)$ whose induced graph is a subgraph of that of $\hat\gamma$ (recall that we are considering our graph as an unweighted graph, only the pattern is important here). Since $\alpha$ and $\beta$ are strictly positive probability vectors, each vertex has degree at least $1$. Since $G(\hat\gamma)$ is a forest, there exists a vertex of degree 1 on the graph, without loss of generality, let it be $y_1$, and let $(x_1, y_1)$ be the single edge connecting to $y_1$. So that the right amount of mass arrives at $y_1$, it must be that $\hat\gamma_{11} = \beta_1$. Repeatedly applying this observation to degree $1$ vertices on the tree as we remove them, we see that a tree graph uniquely specifies the element of $\Pi(\alpha, \beta)$. Thus, there can be no other $\hat\gamma_0 \in \Pi(\alpha, \beta)$ whose graph is a subgraph of $G(\hat\gamma)$. Consequently, if $\hat\gamma = \frac{1}{2}\hat\gamma^{-} + \frac{1}{2} \hat\gamma^{+}$, then $\hat \gamma = \hat\gamma^- = \hat\gamma^+$ since $G(\hat\gamma^-), G(\hat\gamma^+)$ are subgraphs of $G(\hat\gamma)$. Hence $\hat\gamma$ is an extreme point of $\Pi(\alpha, \beta)$.
\end{proof}

In \cite{balinski1984hirsch, balinski1984faces, balinski1983signatures}, extreme points for the dual transport polyhedron $D(C) \cap \{\phi_1=0\}$ are also characterised in terms of a bipartite graph with node sets corresponding to the support points $X$ and $Y$. (Here we fix the first coordinate arbitrarily, as otherwise there are no extreme points, with the lowest-dimensional faces being lines.)
\begin{lemma} \textnormal{\cite[Lemma 1]{balinski1984faces}}
\label{lemma: balinski dual extreme vertices characterisation}
    Given $(\phi, \psi) \in D(C) \cap \{\phi_1=0\}$, define the bipartite graph $G(\phi, \psi) = (V, E(\phi, \psi))$ with $V = (x_i)_{i=1}^M \cup (y_j)_{j=1}^N$ and
    \begin{equation}
        (x_i, y_j) \in E(\phi, \psi) \iff \phi_i + \psi_j = C_{ij}.
    \end{equation}
    Then $(\phi, \psi)$ is an extreme point of $D(C) \cap \{\phi_1=0\}$ if and only if $G(\phi, \psi)$ is connected.
\end{lemma}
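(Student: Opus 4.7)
The plan is to prove both directions through natural constructions inspired by the proof of Lemma \ref{lemma: characterisation of extreme points of primal polyhedra}: for one direction, we exhibit a linear perturbation living in $D(C) \cap \{\phi_1=0\}$ to witness non-extremality, and for the other, we use the connectivity of $G(\phi, \psi)$ to show that the active constraints of the dual polyhedron at $(\phi, \psi)$ pin down all coordinates up to the normalisation $\phi_1=0$.

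For the direction ``disconnected $\Longrightarrow$ not extreme'', suppose $G(\phi, \psi)$ has at least two connected components. Let $K$ be any connected component \emph{not containing} $x_1$ (which exists because $x_1$ lies in exactly one component while there are several), and let $I \subset \{1,\dots,M\}$, $J \subset \{1,\dots,N\}$ be the indices of $x$- and $y$-vertices it contains. For $t \in \R$ small, I would define
\begin{equation*}
    \phi^{\pm}_i = \phi_i \pm t \chi_{I}(i), \qquad \psi^{\pm}_j = \psi_j \mp t \chi_{J}(j).
\end{equation*}
Tight constraints $(x_i,y_j)$ whose endpoints both lie in $K$ remain tight by cancellation, and tight constraints whose endpoints both lie outside $K$ are untouched; constraints crossing between $K$ and its complement are strict by definition of a connected component, so they remain strictly feasible for $|t|$ small enough. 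Since $x_1 \notin K$, we have $\phi^{\pm}_1 = 0$. Hence $(\phi^{\pm}, \psi^{\pm}) \in D(C) \cap \{\phi_1=0\}$ are distinct from $(\phi, \psi)$ but average to it, so $(\phi, \psi)$ is not extreme.

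For the direction ``connected $\Longrightarrow$ extreme'', suppose $(\phi, \psi) = \frac{1}{2}(\phi^+, \psi^+) + \frac{1}{2}(\phi^-, \psi^-)$ with both summands in $D(C) \cap \{\phi_1=0\}$. For any edge $(x_i,y_j)$ of $G(\phi, \psi)$ the constraint is tight at $(\phi, \psi)$, and the two summands satisfy $\phi^{\pm}_i + \psi^{\pm}_j \leq C_{ij}$ with average equal to $C_{ij}$, forcing $\phi^{\pm}_i + \psi^{\pm}_j = C_{ij}$ for each sign. Now fix any spanning tree of $G(\phi, \psi)$ rooted at $x_1$. Starting from $\phi^{\pm}_1 = 0 = \phi_1$ and walking along the tree, each new $y_j$ encountered is determined by $\psi^{\pm}_j = C_{ij} - \phi^{\pm}_i$ for the tight edge used, and similarly each new $x_i$ by $\phi^{\pm}_i = C_{ij} - \psi^{\pm}_j$. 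An immediate induction along the tree gives $\phi^{\pm}_i = \phi_i$ and $\psi^{\pm}_j = \psi_j$ at every vertex, so $(\phi^+, \psi^+) = (\phi^-, \psi^-) = (\phi, \psi)$, proving extremality.

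The main obstacle, which is why the slice $\{\phi_1 = 0\}$ is essential, is precisely that without normalisation the dual polyhedron $D(C)$ is invariant under the shift $(\phi, \psi) \mapsto (\phi + t\mathbf{1}, \psi - t \mathbf{1})$, so no vertices exist at all. Connectivity of $G(\phi, \psi)$ is the combinatorial counterpart of ``the only remaining degree of freedom is killed by fixing $\phi_1=0$'': once one coordinate is pinned, tight constraints propagate uniqueness through the graph precisely when it is connected. Everything else in the proof is a routine perturbation/propagation argument, and no further regularity or compactness input is needed.
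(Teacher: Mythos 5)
Your proof is correct and follows essentially the same approach as the paper's: both directions use the same perturbation on a component not containing $x_1$ for the disconnected case, and the same "tightness of edges propagates to summands, then determinacy propagates from $\phi_1 = 0$ along the graph" argument for the connected case. The paper is merely slightly more quantitative in the disconnected direction, computing an explicit slack $\varepsilon^\pm$ rather than invoking "$|t|$ small enough," and slightly less explicit in the connected direction, where your spanning-tree phrasing makes the propagation cleaner.
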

\begin{proof}
Assume first that $(\phi, \psi)$ are such that $G(\phi, \psi)$ is connected. Then there exists a path from each vertex starting from $x_1$ (which represents $\phi_1=0$). By definition of $G(\phi, \psi)$, iterating along any path in the graph $1= i_1, j_1, i_2, j_2,...j_n$ in $G(\phi, \psi)$ we must have that $\phi_{i_{k+1}} = C_{i_{k+1}j_k} - \psi_{j_k}$, and $\psi_{j_{k}} = C_{i_kj_k} - \phi_{i_k}$. Hence, for a connected graph, the values of $(\phi, \psi)$ are uniquely determined. Let $(\phi^-, \psi^-), (\phi^+, \psi^+)$ be such that $(\phi, \psi) = \frac{1}{2}(\phi^-, \psi^-) + \frac{1}{2}(\phi^+, \psi^+)$. By definition of $D(C)$, for each $i, j$
\begin{equation*}
    \phi^-_i + \psi^-_j \leq C_{ij} \quad \text{and} \quad \phi^+_i + \psi^+_j \leq C_{ij}.
\end{equation*}
Hence $(x_i, y_j) \in E(\phi, \psi)$ if and only if it is in both $E(\phi^-, \psi^-)$ and $E(\phi^+, \psi^+)$; in other words $G(\phi, \psi) = G(\phi^-, \psi^-) \cap G(\phi^+, \psi^+)$. Consequently, if $G(\phi, \psi)$ is connected, both $G(\phi^-, \psi^-)$ and $G(\phi^+, \psi^+)$ are, and the same arcs are sufficient to connect them. It follows from the unique determinacy property of connected graphs shown above that $(\phi, \psi) = (\phi^-, \psi^-) =(\phi^+, \psi^+)$, and hence $(\phi, \psi)$ is an extreme point of $D(C)\cap\{\phi_1=0\}$.

Assume now that $(\phi, \psi)$ are such that $G(\phi, \psi)$ are disconnected. The above argument establishes that on each connected component of $G(\phi, \psi)$, the corresponding coordinates of $\phi$ and $\psi$ are fixed up to a constant. We will show that, pairwise, between different connected components of $G(\phi, \psi)$, there is some flexibility (we can add a small constant to all the coordinates in one component, while keeping the other components unchanged). This maximum amount of flexibility will define a sort of distance between two components. Since the graph is finite, choosing some component $D$ not containing $\phi_1$, and taking the minimum distance between all other components, we will be able to perturb all the values on this component slightly in each direction whilst respecting the constraints of $D(C)\cap\{\phi_1=0\}$. This will allow us to write $(\phi, \psi)$ as a non-trivial convex combination, proving that they do not constitute an extreme point.

Forgetting briefly the constraint $\{\phi_1=0\}$, decompose $G(\phi, \psi) = \sqcup_{k=1}^r G_{k}$, where $G_k = (V_k, E_k)$ are the connected components of $G(\phi, \psi)$. Consider two components $G_0, G_1$. We consider all the edges missing from $G(\phi, \psi)$ that could join $G_0$ to $G_1$, and see how much slack they have in their constraint
\begin{equation*}
\label{eq: slack cost constraint}
    C_{ij} - \phi_i -\psi_j >0.
\end{equation*}
First, we consider all the vertices starting from the $x$ side of $G_0$ to the $y$ side of $G_1$. For $k_0 \neq k_1$, set
\begin{equation}
    \label{eq: eps + definition}
    \varepsilon_{\phi, \psi}(G_{k}, G_{l}):= \min_{\substack{x_{i} \in V_{k}\\y_{j} \in V_{l}}} C_{ij} - \phi_{i} - \psi_{j} >0,
\end{equation}
(Note this is not at all symmetric in $G_{k_0}, G_{k_1}$) where the strict inequality comes from the fact that we are taking two vertices from different components, so none of these edges can be in the graph. Then for each coordinate $(i_0, j_1)$ in \eqref{eq: eps + definition}, adding $\varepsilon_{\phi, \psi}(G_{0}, G_{1})$ to each $\psi_{j_1}$ coordinate and subtracting it from each $\phi_{i_0}$ coordinate preserves all constraints within each component, as well as $(G_0, x)$ to $(G_1, y)$ edge constraints. Symmetrically, for each coordinate $(i_1, j_0)$ in, subtracting $\varepsilon_{\phi, \psi}(G_{1}, G_{0})$ from each $\psi_{j_1}$ coordinate and adding it to each $\phi_{i_0}$ coordinate preserves all constraints within each component, as well as $(G_0, y)$ to $(G_1, x)$ edge constraints. Returning now to our setting $(\phi, \psi) \in D(C) \cap \{\phi_1=0\}$ with disconnected graph choose any connected component $G_l$ not containing $\phi_1$, then set
\begin{equation*}
    \varepsilon^+_{\phi, \psi} = \min_{\substack{k=1,..,r\\ k \neq l}} \varepsilon_{\phi, \psi}(G_{k}, G_{l}) \quad  \text{and} \quad \varepsilon^-_{\phi, \psi} = \min_{\substack{k=1,..,r\\ k \neq l}} \varepsilon_{\phi, \psi}(G_k, G_l).
\end{equation*}
The above computations show that, keeping all the values of $(\phi, \psi)$ fixed on every component apart from $G_l$, we can add $\varepsilon^+$ to each $\psi$ and subtract from each $\phi$ value while staying inside $D(C) \cap \{\phi_1=0\}$. Similarly, we can subtract $\varepsilon^-$ from each $\psi$ and add it to each $\phi$. Hence, defining $(\phi^+, \psi^+)$ as the potential obtained by adding $\varepsilon:= \min(\varepsilon^+, \varepsilon^-)$ to each $\psi$ coordinate in $G_l$ and subtracting from each $\phi$ coordinate in $G_l$, and defining $(\phi^-, \psi^-)$ in the opposite manner with respect to addition and subtraction by $\varepsilon$, we have
\begin{equation*}
    (\phi, \psi) = \frac{1}{2} (\phi^+, \psi^+) + \frac{1}{2}(\phi^-, \psi^-).
\end{equation*}
Hence, we have written $(\phi, \psi)$ as a non-trivial linear combination of two elements of $D(C)\cap\{\phi_1=0\}$, and so it is not an extreme point.
\end{proof}

\begin{remark}
    The second half of the above proof involves quantitative calculations that are essentially identical to those of \cite[Theorem 3.9]{acciaio2025characterization}. We will return to this connection in later sections.
\end{remark}

In \cite{acciaio2025characterization}, the authors also study the uniqueness of solutions to the fully discrete dual transport problem. They also characterise uniqueness in terms of similar bipartite graphs as in Lemma \ref{lemma: balinski dual extreme vertices characterisation}. Here, they arrive at what is (usually) the same graph, but instead, it is constructed implicitly using the set of primal optimisers, combined with the compatibility condition \eqref{intro: compatibility condition}.
\begin{proposition} \textnormal{\cite[Proposition 3.5 (ii)/Corollary 3.14]{acciaio2025characterization}}
\label{prop: acciaio discrete dual uniqueness}
    For $\hat\gamma \in \Pi(\alpha, \beta)$, define the bipartite graph as in Lemma \ref{lemma: characterisation of extreme points of primal polyhedra}. Define the superposition of optimal graphs $G_\Gamma = (V, E_\Gamma)$ where
    \begin{equation*}
        E_\Gamma : = \bigcup_{\substack{\hat\gamma \in \Pi(\alpha, \beta)\\ \hat\gamma \text{ optimal for \eqref{eq: C(X, Y) dual linear programming problem}}}} E(\hat\gamma).
    \end{equation*}
    Then the solutions of \eqref{eq: C(X, Y) dual linear programming problem} are unique up to a constant if and only if $G_\Gamma$ is connected.
\end{proposition}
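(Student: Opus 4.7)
The plan is to exploit the compatibility condition \eqref{intro: compatibility condition} throughout. For any dual optimum $(\phi, \psi)$ and any primal optimum $\hat\gamma$, complementary slackness forces $\phi_i + \psi_j = C_{ij}$ on $\supp\hat\gamma$. Taking the union over all primal optima, every dual optimum is rigid on $E_\Gamma$ in the sense that $\phi_i + \psi_j = C_{ij}$ for all $(i,j) \in E_\Gamma$. This one observation drives both directions.

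For the $(\Leftarrow)$ direction, I would take two dual optima $(\phi, \psi), (\phi', \psi')$ and set $u = \phi - \phi'$, $v = \psi - \psi'$. The rigidity above gives $u_i + v_j = 0$ on every edge of $E_\Gamma$; propagating this relation along any alternating $x$--$y$ path in $G_\Gamma$ from a fixed vertex forces $u \equiv c$ and $v \equiv -c$ once $G_\Gamma$ is connected. This is the same unique-determinacy argument as in the first half of Lemma \ref{lemma: balinski dual extreme vertices characterisation}.

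For the $(\Rightarrow)$ direction I would argue the contrapositive. Suppose $G_\Gamma$ has two or more connected components, and fix a component $V_1 \subsetneq V$, with $x$- and $y$-parts $V_1^x$ and $V_1^y$. The key input is the strict complementary slackness theorem of Goldman--Tucker, which yields a dual optimum $(\phi^*, \psi^*)$ whose active constraint graph is exactly $E(\phi^*, \psi^*) = E_\Gamma$; equivalently, $\phi^*_i + \psi^*_j < C_{ij}$ strictly for every $(i,j) \notin E_\Gamma$. In practice I would build this by taking a primal optimum $\hat\gamma^*$ in the relative interior of the primal optimal face (its support is then exactly $E_\Gamma$) and then invoking complementary slackness against an arbitrary dual optimum. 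Defining
\begin{equation*}
    \phi^\varepsilon_i = \phi^*_i + \varepsilon \chi_{V_1^x}(x_i), \qquad \psi^\varepsilon_j = \psi^*_j - \varepsilon \chi_{V_1^y}(y_j),
\end{equation*}
only the constraints $(i,j)$ with exactly one endpoint in $V_1$ are affected, and they have strictly positive slack in $(\phi^*, \psi^*)$, so small $\varepsilon > 0$ preserves feasibility. The change in objective equals $\varepsilon(\alpha(V_1^x) - \beta(V_1^y))$, which vanishes because any primal optimum $\hat\gamma$ uses only edges inside the connected components of $G_\Gamma$, so the restriction $\hat\gamma|_{V_1^x \times V_1^y}$ is a valid sub-coupling forcing $\alpha(V_1^x) = \hat\gamma(V_1^x \times V_1^y) = \beta(V_1^y)$. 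Hence $(\phi^\varepsilon, \psi^\varepsilon)$ is another dual optimum, and since $V_1 \neq V$ the perturbation is not of the form $(c, -c)$, contradicting uniqueness up to constants.

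The hard part is the strict complementary slackness step. If one merely used an arbitrary dual optimum $(\phi, \psi)$, a crossing edge $(i,j) \in E(\phi, \psi) \setminus E_\Gamma$ could have zero slack and block the perturbation altogether. Establishing the existence of a dual optimum with $E(\phi^*, \psi^*) = E_\Gamma$ precisely is the most delicate ingredient; it can either be quoted from general LP theory or constructed by hand using the characterisation of extreme primal vertices from Lemma \ref{lemma: characterisation of extreme points of primal polyhedra} to build $\hat\gamma^*$ with maximal support.
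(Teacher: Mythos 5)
The paper does not give a proof of this proposition --- it is quoted from \cite{acciaio2025characterization} --- so I assess your proposal on its own merits.

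The overall strategy is sound and your $(\Leftarrow)$ direction is correct: for each $(i,j) \in E_\Gamma$ there is a primal optimum with $\hat\gamma_{ij} > 0$, so complementary slackness forces $\phi_i + \psi_j = C_{ij}$ for every dual optimum, and the difference $u_i + v_j = 0$ propagates along alternating paths exactly as in the first half of the proof of Lemma~\ref{lemma: balinski dual extreme vertices characterisation}.

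The $(\Rightarrow)$ direction correctly identifies strict complementary slackness (Goldman--Tucker) as the crucial input, and the three checks that follow --- feasibility of $(\phi^\varepsilon, \psi^\varepsilon)$ for small $\varepsilon$ (the only affected constraints cross out of $V_1$ and hence have strictly positive slack under $(\phi^*, \psi^*)$), invariance of the objective ($\alpha(V_1^x) = \beta(V_1^y)$ because $\supp\hat\gamma \subset E_\Gamma$ for every primal optimum, so mass flows only inside components), and non-triviality of the perturbation (both $V_1^x$ and its complement are nonempty since $\alpha, \beta > 0$ forces every vertex to carry an edge of $E_\Gamma$) --- are all correct. However, the parenthetical ``in practice'' construction is flawed, and you essentially say so yourself two sentences later. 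Taking $\hat\gamma^*$ with $\supp\hat\gamma^* = E_\Gamma$ and invoking complementary slackness against an \emph{arbitrary} dual optimum $(\phi, \psi)$ only yields $E_\Gamma \subset E(\phi, \psi)$; it does not exclude additional zero-slack constraints outside $E_\Gamma$, which is precisely the obstruction you identify. The statement that there exists a dual optimum whose active set equals $E_\Gamma$ exactly \emph{is} the Goldman--Tucker theorem (or, equivalently, that relative interior points of the primal and dual optimal faces are strictly complementary); it is not recoverable from ordinary complementary slackness plus a maximal-support primal, so the theorem must be quoted outright, not rederived by the sketch you propose.

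A somewhat cleaner packaging of the same idea avoids re-proving the perturbation step: every dual optimum has $E(\phi, \psi) \supseteq E_\Gamma$, so if $G_\Gamma$ is connected then every dual optimum is a vertex of $D(C) \cap \{\phi_1 = 0\}$ by Lemma~\ref{lemma: balinski dual extreme vertices characterisation}, and a face of a polytope consisting only of extreme points is a singleton; conversely the strictly complementary dual optimum has $E(\phi^*, \psi^*) = E_\Gamma$ disconnected, so is not a vertex, and the dual optimal face has positive dimension. This routes the construction of the perturbation through the already-proved Lemma~\ref{lemma: balinski dual extreme vertices characterisation} rather than redoing it inline.
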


Condition \eqref{intro: compatibility condition} forces that for any dual optimisers $(\phi, \psi)$, $G_\Gamma \subset G(\phi, \psi)$. \cite[Proposition 3.6]{acciaio2025characterization} establishes the inverse direction $G_\Gamma = G(\phi, \psi)$ when dual potentials are unique up to a constant. Effectively, since in this case $G(\phi, \psi)$ is connected, if there were some edge in $G(\phi, \psi) \setminus G_\Gamma$, it must belong to a cycle of $G(\phi, \psi)$. The reason for this is that the tree-structured sections of $G(\phi, \psi)$ force all optimal plans supported there to have the same weights, so all edges of $G(\phi, \phi)$ not contained in a cycle must also lie in $G_\Gamma$. But if there is some edge lying in a cycle of $G(\phi, \psi)$ we can always take another optimal plan and permute some of the mass round the cycle as in the proof of Lemma \ref{lemma: characterisation of extreme points of primal polyhedra}, to create an optimal plan giving mass to this edge, and hence the edge is in $G_\Gamma$. Thus, in this case $G_\Gamma = G(\phi, \psi)$. In general, $G_\Gamma$ can be a strict subset of $G(\phi, \psi)$. To construct such a case, take any primal and dual problem for which $G(\phi, \psi)$ has at least two connected components, then add one edge to the graph joining the two components, by reducing a $C_{ij}$ connecting them until it is equal to $\phi_i + \psi_j$. Then no new optimal plans are created so $G_\Gamma$ remains unchanged, but $G(\phi, \psi)$ just gained an edge.

We note that in \cite{balinski1984faces}, the authors distinguish between two types of vertices of dual transport polyhedra, referred to as \textit{non-degenerate} when $G(\phi, \psi)$ is a spanning tree, and \textit{degenerate} when $G(\phi, \psi)$ contains a cycle. The degenerate case occurs precisely when we have non-uniqueness of the primal problem; this fact can be seen as a consequence of Lemma \ref{lemma: characterisation of extreme points of primal polyhedra}. 

\textbf{To synopsise the discrete case:} uniqueness of the primal problem occurs if and only if the corresponding graph $G_\Gamma$ has no cycles, and uniqueness of the dual if and only if $G_\Gamma$ is connected.

\section{The sets of optimal plans and potentials}

We now view the discrete transport problem again through continuous rather than linear programming language. As discussed, in general, we do not have uniqueness, so we are interested in the solution set of minimisers. We define
\begin{equation*}
        \Gamma_c : (\R^d)^M \times (\R^d)^N \rightrightarrows {\mathcal{P}(\R^{2d})}; 
 \end{equation*}
\begin{equation}
\label{eq: discrete transport problem X and Y}
    \Gamma_c(X, Y) : = \argmin_{\gamma \in \Pi(\rho_X, \mu_Y)} \int_{\R^d \times \R^d} c(x, y) \di \gamma(x, y),
\end{equation}
where $\rho_X$ and $\mu_Y$ are defined as in \eqref{eq: rho x and mu y definition}. We denote
\begin{equation*}
    \supp \Gamma(X, Y) = \bigcup_{\gamma \in \Gamma(X, Y)} \supp \gamma,
\end{equation*}
sometimes viewing $\supp \Gamma$ as a graph and sometimes as a set, as it should be clear from the context to which we refer. We know from the theory of the previous section that the optimal set should correspond to a (potentially zero-dimensional) face of the polyhedron $\Pi(\alpha, \beta)$. 
Consider sets of points of the form
\begin{equation*}
    P = \{(x_i, y_i)\}_{i=1}^n \subset \supp \Gamma;
\end{equation*}
which are \textit{distinct} in that
\begin{equation}
\label{eq: distinct points}
  x_i \neq x_j \quad\text{ and }\quad y_i \neq y_j \quad \text{ for all } i \neq j.  
\end{equation}
Under these hypotheses,
\begin{equation}
\label{eq: distinct shift gives new set}
    \{(x_{i+1}, y_i)\}_{i=1}^n \cap P = \emptyset,
\end{equation}
where $x_{n+1} := x_1$. Given such a $P$, we define the cyclical monotonicity gap
\begin{equation*}
    \Delta_P = \sum_{i=1}^n c(x_{i+1}, y_i) - c(x_i, y_i).
\end{equation*}
 We can then say the following.
\begin{proposition}
\label{prop: uniqueness iff strict cm}
    Let $X \in (\R^d)^M$ and $Y \in (\R^d)^N$ and consider the transport problem \eqref{eq: discrete transport problem X and Y} for some $c: \R^d \times \R^d \to \R$. Then
    \begin{enumerate}[label=(\roman*)]
        \item \label{enum: weak inequality} $\gamma \in \Gamma(X, Y)$ if and only if $\Delta_P \geq 0$ for all distinct (in the sense of \eqref{eq: distinct points}) $P$ in $\supp \gamma$.
        \item \label{enum: strict gives uniqueness} Problem \eqref{eq: discrete transport problem X and Y} has uniqueness of solutions if and only if there exists $\gamma \in \Pi(\rho_X, \mu_Y)$, such that $\Delta_P > 0$ for all distinct (in the sense of \eqref{eq: distinct points}) $P \subset \supp \gamma$ of length $\geq 2$ (and this $\gamma$ is the unique solution).
    \end{enumerate}
\end{proposition}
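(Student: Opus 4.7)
For part \ref{enum: weak inequality}, the forward direction is the classical $c$-cyclical monotonicity of the support of an optimal plan (the distinctness assumption is no loss, since repetitions in a cycle can be removed without changing $\Delta_P$). For the converse, given any $\gamma_\ast \in \Pi(\rho_X, \mu_Y)$, I would analyse the signed measure $\gamma_\ast - \gamma$, which has zero marginals. A standard bipartite flow decomposition writes such a balanced signed flow as a non-negative combination of elementary circulations along simple cycles, each circulation having its ``forward'' matching (where $\gamma_\ast > \gamma$) in $\supp \gamma_\ast$ and its ``backward'' matching (where $\gamma > \gamma_\ast$) in $\supp \gamma$. A short computation shows that the cost contribution of an elementary circulation of strength $\varepsilon > 0$ equals $\varepsilon \Delta_{P'}$, where $P'$ is the backward matching read in the cyclic order realising the reversal identity $\Delta_{P'} = -\Delta_{P''}$, with $P''$ the forward matching. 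Since $P' \subset \supp \gamma$ is distinct of length $n \geq 2$, the hypothesis $\Delta_{P'} \geq 0$ forces every circulation's cost contribution to be non-negative, giving $\int c\,d\gamma_\ast \geq \int c\,d\gamma$ and hence optimality of $\gamma$.

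For the ``only if'' direction of part \ref{enum: strict gives uniqueness}, suppose $\gamma$ is the unique optimiser but that some distinct $P = ((x_{i_k}, y_{j_k}))_{k=1}^n \subset \supp \gamma$ of length $n \geq 2$ satisfies $\Delta_P \leq 0$. Part \ref{enum: weak inequality} then forces $\Delta_P = 0$, and I construct a rival optimum by the cycle swap
\[
    \gamma_\varepsilon := \gamma + \varepsilon \sum_{k=1}^n \bigl(\delta_{(x_{i_{k+1}},\, y_{j_k})} - \delta_{(x_{i_k},\, y_{j_k})}\bigr).
\]
The distinctness condition \eqref{eq: distinct points}, via \eqref{eq: distinct shift gives new set}, ensures that the families $\{(x_{i_k}, y_{j_k})\}_k$ and $\{(x_{i_{k+1}}, y_{j_k})\}_k$ are pairwise distinct internally and mutually disjoint, so that for $\varepsilon > 0$ small enough $\gamma_\varepsilon$ is a non-negative measure with the correct marginals (the shift cancels at each $x_{i_k}$ and each $y_{j_k}$). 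Its cost exceeds that of $\gamma$ by $\varepsilon \Delta_P = 0$ and $\gamma_\varepsilon \neq \gamma$, contradicting uniqueness.

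For the ``if'' direction, suppose $\gamma$ satisfies the strict hypothesis and that some optimiser $\gamma' \neq \gamma$ exists. Apply the bipartite flow decomposition to $\gamma' - \gamma$ to extract a non-trivial elementary circulation on a simple cycle $C$ of length $2n \geq 4$, whose backward matching $S$ lies in $\supp \gamma$. The mixture $\gamma_t := (1-t)\gamma + t\gamma'$ is optimal with $\supp \gamma_t \supset C$, so applying \ref{enum: weak inequality} to both cyclic orderings of $C$ together with the identity $\Delta_{P_{\text{rev}}} = -\Delta_{P}$ forces the gap around $C$ to vanish in both directions. The reversed ordering of $S$ then supplies a distinct $P' \subset \supp \gamma$ of length $n \geq 2$ with $\Delta_{P'} = 0$, violating the strict hypothesis. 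The main step requiring care is the sign-respecting bipartite flow decomposition together with the cyclic-reversal identity, since these are what guarantee that any discrepancy between two plans manifests as a zero-gap cycle whose $\supp \gamma$-half delivers the contradiction; once these combinatorial facts are in place, both directions of the proposition follow quickly.
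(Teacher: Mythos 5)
Your structure is sound and in two places takes a genuinely different route from the paper. For the converse of part~\ref{enum: weak inequality}, you give a direct flow-decomposition argument, whereas the paper simply invokes the classical fact that $c$-cyclical monotonicity of the support implies optimality; your version is more self-contained and works for arbitrary $c$ without appealing to the continuous theory. For the ``if'' direction of part~\ref{enum: strict gives uniqueness}, you avoid dual potentials altogether, relying only on part~\ref{enum: weak inequality} applied to $\gamma_t = (1-t)\gamma + t\gamma'$ combined with a sign-reversal identity, whereas the paper extracts a simple cycle from $\supp(\gamma_0 - \gamma_1)$ and uses the compatibility condition \eqref{intro: compatibility condition} to telescope $\Delta_P = \sum_i(\phi_{i+1} - \phi_i) = 0$. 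Both are valid; the paper's is slightly shorter, yours is purely combinatorial and needs no appeal to duality.

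However, the identity $\Delta_{P_{\mathrm{rev}}} = -\Delta_P$, if $P_{\mathrm{rev}}$ means reversing the order of the tuple $P$, is false. For $P = ((x_1,y_1),\dots,(x_n,y_n))$ and $P_{\mathrm{rev}} = ((x_n,y_n),\dots,(x_1,y_1))$, one computes $\Delta_{P_{\mathrm{rev}}} = \sum_j \bigl(c(x_{j-1},y_j) - c(x_j,y_j)\bigr)$, which is not $-\Delta_P = \sum_j \bigl(c(x_j,y_j) - c(x_{j+1},y_j)\bigr)$; for $n=2$ one even gets $\Delta_{P_{\mathrm{rev}}} = +\Delta_P$. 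The identity you actually need relates the \emph{two perfect matchings} of the alternating simple cycle $C = x_1, y_1, x_2,\dots,x_n, y_n, x_1$: taking $S = ((x_1,y_1),\dots,(x_n,y_n))$ to be the backward matching (the edges in $\supp\gamma$) in the natural cyclic order, and $S' = ((x_1,y_n),(x_n,y_{n-1}),\dots,(x_2,y_1))$ the forward matching (the edges in $\supp\gamma'$) read in the opposite traversal direction, one has $\Delta_{S'} = -\Delta_S$. Since $S$ and $S'$ are both distinct subsets of $\supp\gamma_t$, part~\ref{enum: weak inequality} yields $\Delta_S \geq 0$ and $\Delta_{S'} \geq 0$, hence $\Delta_S = 0$. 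Your final sentence ``The reversed ordering of $S$ then supplies a distinct $P' \subset \supp\gamma$\ldots'' should therefore read that it is $S$ itself, in its natural cyclic order, that lies in $\supp\gamma$ with $\Delta_S = 0$, contradicting the strict hypothesis. With this correction the argument closes; note also that this is precisely the identity already implicit in your flow-decomposition step for part~\ref{enum: weak inequality}, so the two halves become consistent once the wording is tightened.
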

\begin{proof}
    Claim \ref{enum: weak inequality} is effectively a rephrasing of the classical cyclical monotonicity in the discrete case, as stated in the introduction \eqref{intro: c cm definition statment}. We know optimality is equivalent to testing $\Delta_P \geq 0$ for all $P=\{(x_i, y_i)\}_{i=1}^n \subset \supp \gamma$ without distinctness assumptions. We write the cycle
    \begin{equation*}
        x_1, y_1, x_2, y_2, ..., x_n, y_n, x_1,
    \end{equation*}
    and decompose into cycles $\{P_k\}_{k=1}^m$ of maximal length in which no $x$ or $y$ point is equal to another in the cycle. Then each $P_k = \{(x_i, y_i)\}_{i=1}^{n_k}$  is distinct, and $\Delta_P = \sum_{k=1}^m \Delta_{P_k}$, so that it is sufficient to test only distinct $P$ for optimality as required.

    Claim \ref{enum: strict gives uniqueness} can be seen as follows. Let $\gamma \in \Gamma(X, Y)$, assume that there exists $P = (x_i, y_i)_{i=1}^n \subset \supp \gamma$ distinct in the sense of \eqref{eq: distinct points}, with $\Delta_P = 0$. Define $\Tilde{\gamma}$ by
    \begin{equation*}
        \tilde\gamma := \gamma + \varepsilon \sum_{i=1}^n \left(\delta_{x_{i+1}, y_i} - \delta_{x_i, y_i}\right),
    \end{equation*}
    where $\varepsilon = \min_{i=1,..n} \gamma((x_i, y_i)) >0$. We have $\tilde\gamma \in \Pi(\rho_X, \mu_Y)$ by construction. By \eqref{eq: distinct shift gives new set}, $\gamma$ and $\tilde \gamma$ are distinct transport plans, and by $\Delta_P = 0$, $\int c \di \gamma = \int c \di \tilde\gamma$, so $\tilde \gamma$ is also an optimal transport plan for $c$ cost, and the problem does not have a unique solution.

    Conversely, assume that we have uniqueness, take $\gamma_0, \gamma_1 \in \Pi(\rho_X, \mu_Y)$ as distinct optimal plans. Then the measure $\gamma_0 - \gamma_1$ is a non-zero signed zero mass discrete probability measure on $\R^d \times \R^d$ with zero first and second marginals. Consider the bipartite graph of $\supp (\gamma_0 - \gamma_1)$ defined as $(V, E)$ where
    \begin{equation*}
        V = p_X(\supp (\gamma_0 - \gamma_1)) \cup p_Y(\supp (\gamma_0 - \gamma_1)),
    \end{equation*}
    and $E = E^+ \sqcup E^-$ where
    \begin{equation*}
        E^+ = \left\{ (x, y) : (\gamma_0 - \gamma_1)(x, y) > 0 \right\} \quad \text{and} \quad E^- = \left\{ (x, y) : (\gamma_0 - \gamma_1)(x, y) < 0 \right\}.
    \end{equation*}
    We have $E^+ \subset \supp \gamma_0$ and $E^- \subset \supp \gamma_1$. Every vertex has at least one positive and one negative edge leaving from it (since $\gamma_0 - \gamma_1$ has zero marginals). And the graph is simple, with at most one edge joining any two points, either positive or negative. We will construct a simple cycle in the graph. Start from some $x_{i_1}$, there exists a positive edge to some $y_{j_1}$. Then there exists some negative edge to some $x_{i_2}$ in the graph, necessarily $i_2 \neq i_1$ since the graph is simple and the signs of the edges are different. We repeat this process, always choosing to visit a new vertex, until eventually we are forced to revisit a vertex for the first time. Let $y_{j_k}$ the last distinct point before we subsequently return to vertex $x_{i_m}$ for some $m<k$. (We cannot have $i_m = i_k$ since the signs of their edges in the graph must be different.) Up to removing the points before $i_m$ and relabelling, it follows that there exists a nontrivial simple cycle of the form
    \begin{equation}
        \lefteqn{\overbrace{\phantom{x_1,\, y_1}}^{\in E^+}} x_1,\, \lefteqn{\underbrace{\phantom{y_1,\, x_2}}_{\in E^-}} y_1,\, \lefteqn{\overbrace{\phantom{x_2,\, y_2}}^{\in E^+}} x_2,\, y_2 \cdots \lefteqn{\overbrace{\phantom{x_n,\, y_n}}^{\in E^+}} x_n,\, \underbrace{y_n,\, x_1}_{\in E^-}.
    \end{equation}
    Take $P = (x_i, y_i)_{i=1}^n$, then since the cycle is simple, $P$ is distinct in the sense of \eqref{eq: distinct points}. Moreover, $P \subset \supp \gamma_0$ since these edges are all in $E^+$. For any optimal potential pair $\phi, \psi \in \R^{M \times N}$ for \eqref{eq: C(X, Y) dual linear programming problem}, the compatibility condition with both $\gamma_0$ and $\gamma_1$ implies $\phi_i + \psi_i = c(x_i, y_i)$ and $\phi_{i+1} + \psi_i = c(x_{i+1}, y_i)$, and consequently
    \begin{equation*}
        \Delta_P=\sum_{i=1}^n c(x_{i+1}, y_i) - c(x_i, y_i) = \sum_{i=1}^n (\phi_{i+1} + \psi_i - \phi_i - \psi_i) = 0.
    \end{equation*}
    Hence, we have shown that non-uniqueness implies that for any optimal $\gamma_0$, there exists a distinct $P \subset \gamma_0$ with $\Delta_P = 0$. The characterisation \eqref{enum: strict gives uniqueness} follows.
\end{proof}

We define similar optimal sets for the Kantorovich potentials. We will use the semi-dual formulation \eqref{intro: c semidual problem}, which takes a special formulation as soon as one of the measures is discrete. In general, we are only interested in the values of the potentials on the support points $X$ and $Y$,  and so we can reduce to a finite-dimensional problem. We define
\begin{equation*}
        \Phi_c : (\R^d)^M \times (\R^d)^N \rightrightarrows \R^M, \quad \quad \Psi_c : (\R^d)^M \times (\R^d)^N \rightrightarrows \R^N; 
 \end{equation*}
\begin{equation}
\label{discrete: c semi discrete phi vect}
    \Phi_c(X, Y) := \argmax_{\phi \in \R^M}  \langle \phi | \alpha \rangle_{\R^M} + \int_\X \phi^c(y | X) \di \mu_Y(y),
\end{equation}
\begin{equation}
\label{discrete: c semi discrete psi vect}
    \Psi_c(X, Y) := \argmax_{\psi \in \R^N} \int_\X \psi^c(x |Y) \di \rho_{X}(x) + \langle \psi | \beta\rangle_{\R^N},
\end{equation}
where the $c$-transforms are taken with respect to points in the support of the target $X$ and $Y$:
\begin{equation*}
    \psi^c(x |Y) := \inf_{j=1,...N} c(x, y_j) - \psi_j, \quad \text{ and } \quad \phi^c(y |X) := \inf_{i=1,...M} c(x_i, y) - \psi_i.
\end{equation*}
Problems \eqref{discrete: c semi discrete phi vect} and \eqref{discrete: c semi discrete psi vect} are referred to as the \textit{semi-discrete} transport problem, which has been the subject of numerous recent works, see \cite{kitagawa2019convergence,levy2015numerical, gallouet2018lagrangian,merigot2011multiscale}.
 Due to symmetry, we can define both $\Phi$ and $\Psi$ as above. It is not clear which is more useful, or if both are necessary (recall sometimes we want to fix $X$ and perturb $Y$ so our perturbations are not necessarily symmetric). To avoid developing a dual theory unnecessarily, we will concentrate on problem \eqref{discrete: c semi discrete phi vect}. Symmetric calculations apply to \eqref{discrete: c semi discrete psi vect}.
 
 On one side of \eqref{discrete: c semi discrete phi vect}, we have a vector in $\R^M$, corresponding to the dual in the linear programming formulation, and on the other side we have $\phi^c \in C_b(\Y)$, corresponding to the measure theoretic formulation. In this way, \eqref{discrete: c semi discrete phi vect} will serve as a bridge between these two formulations, allowing us to pass stability results from linear programming to functional analysis language. Then, qualitative stability, Theorem \ref{intro: qualitative stability}, should allow us to take a limit of discrete measures to say statements about more general optimal transport problems beyond the discrete case. For ease of exposition, we fix the cost to be quadratic, $c(x, y) = -\langle x | y \rangle$, and state the generalisations to general $c$ after. Here we simply denote $\Gamma$ and $\Phi$ for the sets of optimisers. In this case, \eqref{discrete: c semi discrete phi vect} becomes
\begin{equation}
\label{eq: semi discrete problem}
    \inf_{\phi \in \R^N} \langle \phi | \alpha\rangle_{\R^M} + \int_{\R^d} \phi^*(y) \di \mu(y).
\end{equation}
with Legendre transform $\psi^* : \R^d \to \R$ defined by
    \begin{equation}
    \label{eq: discrete leg transf sup}
        \phi^*(y |X) : = \sup_{i = 1,..., M} \langle y | x_i \rangle - \phi_i.
    \end{equation}
To keep notation light, where clear, we omit the dependence on $X$ in the Legendre transform. The function $\phi^* : \R^d \to \R$ is a finite supremum of hyperplanes, with one hyperplane $y \mapsto \langle y | x_i \rangle - \phi_i$ of gradient of $x_i$  corresponding to each point in the support of $\rho_X$. Thus $\phi^*$ is piecewise flat, and the region for which the hyperplane $i$ is active in the sup \eqref{eq: discrete leg transf sup} is called the $i$th Laguerre cell of $\phi \in \R^M$, denoted
\begin{equation*}
    \lag_i(\phi | X) = \left\{ y \in \R^d :  \langle y | x_i \rangle - \phi_i \geq \langle y | x_j \rangle - \phi_j \text{ for all } j = 1,..., M \right\} = \left\{ y \in \R^d : x_i \in \partial \phi^*(y) \right\}.
\end{equation*}
These cells have pairwise disjoint interiors, providing a decomposition of $\R^d$. This definition \textit{does not} depend on $\mu$, only on the points $X \in (\R^d)^M$. As \eqref{eq: semi discrete problem} is a convex minimisation problem, for each $X, Y \in \R^N$, $\Phi(X, Y)$ is a convex set. We will give an explicit characterisation of $\Phi$ as a finite intersection of certain half-spaces. This will allow us to study how $\Phi$ changes as we perturb $X$ and $Y$. 
\begin{lemma}
\label{lem: dual vect opt iff all Si}
For any vector of target positions $Y \in (\R^d)^N$, set
\begin{equation}
    S_i(X,Y) : = \{  y \in \{y_j\}_{j=1}^N: \text{ there exists } \gamma \in \Gamma(X,Y) \text{ such that } (x_i, y) \in \supp \gamma\}.
\end{equation}
Then
\begin{equation*}
    \Phi(X,Y) = \left\{ \phi \in \R^N : S_i(X,Y) \subset \lag_i(\phi | X) \;\text{ for all }\; i = 1,...,M \right\}.
\end{equation*}
\end{lemma}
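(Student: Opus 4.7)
The plan is to deduce the characterization directly from strong duality together with complementary slackness, translated into the language of Laguerre cells. The key algebraic identity is that, by the very definition of the Legendre transform $\phi^*(y\,|\,X) = \sup_{k} \langle y\,|\,x_k\rangle - \phi_k$, one has the pointwise inequality $\phi_i + \phi^*(y_j\,|\,X) \geq \langle x_i\,|\,y_j\rangle$, with equality exactly when the supremum is attained at index $i$, i.e. when $y_j \in \lag_i(\phi\,|\,X)$. Integrating this inequality against an arbitrary $\gamma \in \Pi(\rho_X, \mu_Y)$ and using the marginal constraints gives
\begin{equation*}
    \int \langle x\,|\,y\rangle \di \gamma \;\leq\; \langle \phi\,|\,\alpha\rangle + \int \phi^*(y\,|\,X) \di \mu_Y(y),
\end{equation*}
with equality if and only if $\supp \gamma \subset \{(x_i, y_j) : y_j \in \lag_i(\phi\,|\,X)\}$.

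Next, I would invoke strong duality for \eqref{eq: semi discrete problem}: since the primal (max correlation) and semi-dual optima are both attained in the discrete setting, $\phi \in \Phi(X,Y)$ if and only if the two sides above coincide for some, hence every, optimal $\gamma \in \Gamma(X,Y)$. Combined with the characterisation of the equality case from the first step, this says $\phi \in \Phi(X,Y)$ is equivalent to
\begin{equation*}
    \supp \Gamma(X,Y) \;=\; \bigcup_{\gamma \in \Gamma(X,Y)} \supp \gamma \;\subset\; \bigcup_{i=1}^M \{x_i\} \times \lag_i(\phi\,|\,X).
\end{equation*}
Reading this inclusion slice by slice at each source point $x_i$ and using the definition of $S_i(X,Y)$ as the set of targets which receive mass from $x_i$ under at least one optimal plan, one obtains exactly $S_i(X,Y) \subset \lag_i(\phi\,|\,X)$ for every $i = 1, \ldots, M$, as required.

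There is no real obstacle here: both directions are consequences of strong duality and complementary slackness. The only point worth being careful about is the equivalence between ``saturation holds for some optimal $\gamma$'' and ``saturation holds for every optimal $\gamma$''. The latter is automatic because any two optimal plans produce the same primal value, so once the weak-duality inequality is saturated with one optimal $\gamma$, it must be saturated with every optimal $\gamma$. In particular, imposing the condition against the whole set $\supp \Gamma(X,Y)$ (rather than against a single optimum) does not make it any stronger, which is what allows the characterisation to be phrased purely in terms of the sets $S_i(X,Y)$.
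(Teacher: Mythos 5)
Your proof is correct and is essentially the paper's proof. The paper directly invokes the compatibility condition \eqref{intro: compatibility condition} (which is precisely the complementary-slackness statement you re-derive from the Fenchel--Young inequality plus strong duality), and translates $y \in \lag_i(\phi \,|\, X) \iff x_i \in \partial \phi^*(y)$ exactly as you do. The only difference is one of packaging: where you unfold weak duality and the equality case of $\phi_i + \phi^*(y_j) \geq \langle x_i\,|\,y_j\rangle$ explicitly, the paper compresses this into a citation of the compatibility condition. Your observation that saturation for one optimal $\gamma$ forces saturation for every optimal $\gamma$ is the same fact that lets the paper phrase the conclusion in terms of $\supp\Gamma(X,Y)$ rather than a single plan, and it is good that you flagged it.
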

\begin{proof}
By definition $y \in \lag_i(\phi |X)$ if and only if $x_i \in \partial \phi^*(y)$. Hence $S_i(X,Y) \subset \lag_i(\phi | X)$ for all $i=1,...,M$ if and only if for any $\gamma \in \Pi(\rho_X, \mu_Y)$ optimal and any $(x_i, y) \in \supp \gamma$, we have $x_i \in \partial \phi^*(y)$. But this is precisely saying that $\supp \gamma \subset \partial \phi^*$. By the compatibility condition \eqref{intro: compatibility condition}, this is equivalent to $\phi \in \Phi(X,Y)$.
\end{proof}

\begin{proposition}[Characterisation of the set of dual potentials]
\label{prop: dual set characterisation}
Let $\rho_X$ and $\mu_Y$ be discrete probability measures on $\R^d$ for some $X \in (\R^d)^M$ and $Y \in (\R^d)^N$. Then $\Phi(X, Y)$ is a convex polyhedron in $\R^M$ characterised by a finite intersection of half spaces, one for each pairing of distinct points $x, x' \in \{x_i\}_{i=1}^M$. Explicitly,
\begin{equation}
\label{eq: characterisation of set of dual potentials as intersection of hyperplanes}
    \Phi(X,Y) = \bigcap_{i, j} H_{ij}(X,Y),
\end{equation}
where
\begin{equation}
\label{eq: Hijx hyperplane definition}
    H_{ij}(X,Y): = \left\{ \phi \in \R^M : \langle \phi | e_i - e_j \rangle \leq h_{ij}(X,Y) \right\},
\end{equation}
$e_i$ are basis elements of $\R^M$, and
\begin{equation*}
    h_{ij} : (\R^d)^M \times (\R^d)^N \to \R; \quad X, Y \mapsto \inf_{y \in S_i(X,Y)} \langle x_i - x_j | y \rangle.
\end{equation*}
\end{proposition}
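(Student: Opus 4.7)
The plan is to read off the proposition directly from the preceding lemma (Lemma \ref{lem: dual vect opt iff all Si}) by unpacking the definition of Laguerre cells into a family of linear inequalities on $\phi$, and then converting the quantifier over $y \in S_i(X,Y)$ into an infimum, which is attained because $S_i(X,Y)$ is finite.

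First I would recall that, by Lemma \ref{lem: dual vect opt iff all Si},
\begin{equation*}
    \Phi(X,Y) = \left\{\phi \in \R^M : S_i(X,Y) \subset \lag_i(\phi \mid X) \text{ for all } i=1,\dots,M\right\}.
\end{equation*}
Next I would expand the Laguerre-cell membership. By definition, $y \in \lag_i(\phi \mid X)$ exactly when $\langle y \mid x_i\rangle - \phi_i \geq \langle y \mid x_j \rangle - \phi_j$ for every $j$, i.e.
\begin{equation*}
    \langle \phi \mid e_i - e_j \rangle \leq \langle x_i - x_j \mid y\rangle \quad \text{for every } j = 1,\dots,M.
\end{equation*}
Thus $S_i(X,Y) \subset \lag_i(\phi \mid X)$ is equivalent to the family of inequalities $\langle \phi \mid e_i - e_j\rangle \leq \langle x_i - x_j \mid y\rangle$ for every $j$ and every $y \in S_i(X,Y)$.

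Then I would observe that, since $S_i(X,Y)$ is a finite subset of $\{y_1,\dots,y_N\}$, the quantifier over $y \in S_i(X,Y)$ amounts to a single inequality in which the right-hand side is replaced by its minimum:
\begin{equation*}
    \langle \phi \mid e_i - e_j\rangle \leq \inf_{y \in S_i(X,Y)} \langle x_i - x_j \mid y\rangle = h_{ij}(X,Y).
\end{equation*}
Assembling these over all pairs $(i,j)$ gives exactly $\phi \in \bigcap_{i,j} H_{ij}(X,Y)$, proving \eqref{eq: characterisation of set of dual potentials as intersection of hyperplanes}. The diagonal case $i=j$ produces the trivial condition $0 \leq 0$ and can be dropped. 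Finally, $\Phi(X,Y)$ is the intersection of at most $M(M-1)$ affine half-spaces in $\R^M$, hence a convex polyhedron.

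The only minor subtlety, which is not really an obstacle, is ensuring that $S_i(X,Y)$ is nonempty so the infimum defining $h_{ij}$ is over a nonempty finite set: this holds because $\alpha_i > 0$, so every $x_i$ must send some mass under any optimal plan, hence at least one $y_j$ lies in $S_i(X,Y)$. If $S_i(X,Y)$ were empty the corresponding constraints would be vacuous (with $h_{ij} = +\infty$), and the result would still hold.
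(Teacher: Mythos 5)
Your proposal is correct and follows essentially the same route as the paper: invoke Lemma \ref{lem: dual vect opt iff all Si}, unpack the Laguerre-cell membership into the linear inequalities $\langle \phi \mid e_i - e_j\rangle \leq \langle x_i - x_j \mid y\rangle$, replace the quantifier over the finite set $S_i(X,Y)$ by the infimum $h_{ij}$, and intersect over $i$. The extra remarks on the trivial diagonal case and on $S_i(X,Y)$ being nonempty are sound additions but not needed for the argument.
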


\begin{proof}
We will show that
\begin{equation*}
    \phi \in \bigcap_{i, j} H_{ij} \iff S_i \subset \lag_i(\phi) \text{ for all } i,
\end{equation*}
which implies the result by Lemma \ref{lem: dual vect opt iff all Si}. We have $S_i \subset \lag_i(\phi)$ if and only if for all $y \in S_i$ and all $j=1,...M$,
\begin{equation}
\label{eq: point in cell iff psi hyperplane}
    \langle x_i | y \rangle - \phi_i \geq \langle x_j | y \rangle - \phi_j.
\end{equation}
This is equivalent to asking that for all $j =1,...,M$,
\begin{equation*}
    \langle \phi | e_i - e_j \rangle_{\R^M} \leq \inf_{y \in S_i}\langle x_i - x_j | y \rangle_{\R^d} = h_{ij}.
\end{equation*}
Hence $\phi \in H_{ij}$ for all $j \neq i$, and so
\begin{equation*}
    \phi \in \bigcap_{j=1}^M H_{ij} \iff S_i \subset \lag_i(\phi).
\end{equation*}
Finally, we want $S_i \subset \lag_i(\phi)$ for any $i=1,..,M$. Hence, we take the intersection over $i$, giving the characterisation \eqref{eq: characterisation of set of dual potentials as intersection of hyperplanes}.
\end{proof}

\begin{remark}
We have two parallel half-spaces with gradient $e_i - e_j$ in $\R^M$. Their intersection $\{ \phi: \phi_i - \phi_j \in [-h_{ji}, h_{ij}]\}$ is all the admissible intervals of the relative vertical displacements between the hyperplane associated to $x_i$ and that associated to $x_j$, which preserve $S_i \subset \lag_i(\phi)$ and $S_j \subset \lag_j(\phi)$. Each of the hyperplanes defining $H_{ij}$ is orthogonal to the vector $(1, 1, ..., 1)$, and so the optimal set is symmetrical in this direction. This is consistent with the fact that potentials are only ever defined up to a constant. What matters are the relative heights of the hyperplanes defining $\phi^*$, as these decide the cell boundaries. If two hyperplanes are both translated vertically by the same amount, their intersection set remains the same, so if their cells are adjacent, the boundary is preserved.

The compatibility condition \eqref{intro: compatibility condition} forces that at $y \in \supp \mu_Y$ the subgradient of any optimal $\phi^*: \Y \to \R$ must contain all $x$ such that $(x, y) \in \supp \gamma$ for any $\gamma \in \Gamma(X, Y)$. We then have the liberty to choose any convex function whose subdifferential contains these values (it can always also contain more values too; \eqref{intro: compatibility condition} demands containment, not equality). If $\mu_Y$ is a very fine mesh, then the subgradients of $\phi^*$ are specified on a very dense set, so there is less flexibility in choice. If $\rho_X$ is a very fine mesh compared to $\mu_Y$, then for $\phi^{**} : \X \to \R$, many $y$ will lie in the subdifferentials of multiple $x_i$. This also leaves little choice, as we are forced to interpolate affinely between any such pair of points so that the value can be in the subgradient of both.  
Many points $x_i \in \supp \rho$ will send their mass split to one $y$, and so for any potential $\phi^*$, these $y$ should lie at the intersection of many cells rather than just inside a single cell. 
\end{remark}

\begin{remark}[Extension to cost $c$]
The extension of this to cost $c$ simply uses generalised Laguerre cells
\begin{equation*}
    \lag_{c}(\phi) = \left\{ y \in \R^d : c(x_i, y) - \phi_i \leq c(x_j, y) - \phi_j\;\; \forall j= 1,...M\right\}.
\end{equation*}
The compatibility condition demands $S_i^c \subset \lag_i^c(\phi)$ where $S_i^c$ is defined similarly to the quadratic case, using $\Gamma_c$ instead. The hyperplane constraints are given explicitly using $h_{ij}^c$ defined by
\begin{equation*}
    h_{ij}^c : (\R^d)^M \times (\R^d)^N \to \R; \quad X, Y \mapsto \sup_{y \in S_i^c(X,Y)} c(x_i, y) -c(x_j, y).
\end{equation*}
$\phi \in \R^M$ is optimal if an only if for all $i = 1,...M$ and $j=1,...N$,
\begin{equation*}
    \langle \phi | e_i - e_j \rangle \geq h_{ij}^c(X, Y).
\end{equation*}
\end{remark}

\begin{example}
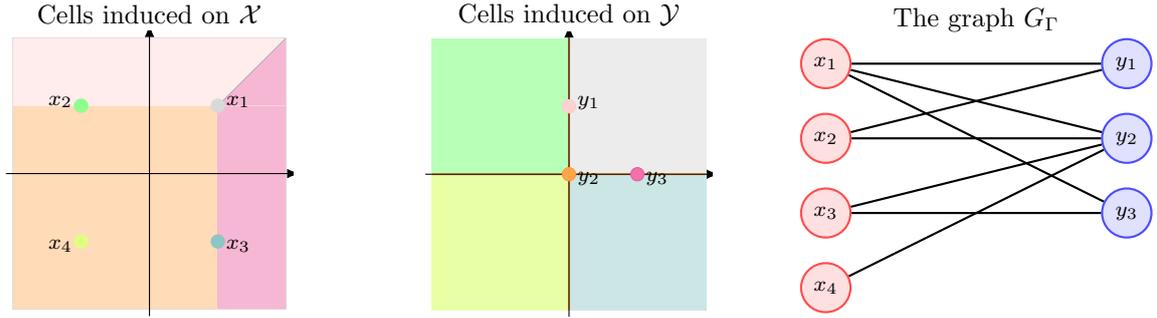
\begin{figure}
\centering
\begin{minipage}{0.31\linewidth}
\centering
\begin{tikzpicture}[>=Latex,scale=0.9]
  \tikzset{
    cellA/.style={fill=orange!28},
    cellB/.style={fill=magenta!35},
    cellC/.style={fill=pink!30},
    gridf/.style={very thin, gray!25},
    axisHL/.style={line width=0.8pt, orange!70},
    site1/.style={draw=lightgray!60, fill=lightgray!60, circle, minimum size=5pt, inner sep=0pt},
    site4/.style={draw=lime!50, fill=lime!50, circle, minimum size=5pt, inner sep=0pt},
    site3/.style={draw=teal!45, fill=teal!45, circle, minimum size=5pt, inner sep=0pt},
    site2/.style={draw=green!45, fill=green!45, circle, minimum size=5pt, inner sep=0pt},
    lab/.style={black, inner sep=1.5pt, font=\scriptsize}
  }

  \begin{scope}
    \clip (-2.1,-2.1) rectangle (2.1,2.1);

    \fill[cellA] (-2,-2) rectangle (1,1);

    \fill[cellB] (1,-2) rectangle (2,2);

    \fill[cellC] (-2,1) rectangle (2,2);

    \fill[cellB] (1,1) -- (2,2) -- (2,1) -- cycle;

    \draw[gray!60] (1,1) -- (2,2);

    \draw[gridf] (-2,-2) rectangle (2,2);


    \draw[->] (-2.1,0) -- (2.2,0) node[right] {$x_1$};
    \draw[->] (0,-2.1) -- (0,2.2) node[above] {$x_2$};
  \end{scope}

  \node[site1] (x1) at (1,1) {};
  \node[site2] (x2) at (-1,1) {};
  \node[site3] (x3) at (1,-1) {};
  \node[site4] (x4) at (-1,-1) {};

  \node[lab, anchor=west]  at ($(x1)+(0.06,0.06)$) {$x_1$};
  \node[lab, anchor=east]  at ($(x2)+(-0.06,0.06)$) {$x_2$};
  \node[lab, anchor=west]  at ($(x3)+(0.06,-0.06)$) {$x_3$};
  \node[lab, anchor=east]  at ($(x4)+(-0.06,-0.06)$) {$x_4$};

  \node[font=\small] at (0,2.35) {Cells induced on $\X$};
\end{tikzpicture}
\end{minipage}\hfill
\begin{minipage}{0.31\linewidth}
\centering
\begin{tikzpicture}[>=Latex,scale=0.9]
  \tikzset{
    cellG/.style={fill=green!28},
    cellY/.style={fill=lime!35},
    cellP/.style={fill=lightgray!30},
    cellB/.style={fill=teal!20},
    gridf/.style={very thin, gray!25},
    cut/.style={line width=0.8pt, orange!70},
    site1/.style={circle, draw=pink!80, fill=pink!70, minimum size=5pt, inner sep=1.2pt},
    site2/.style={circle, draw=orange!80, fill=orange!70, minimum size=5pt, inner sep=1.2pt},
    site3/.style={circle, draw=magenta!80, fill=magenta!70, minimum size=5pt, inner sep=1.2pt},
    lab/.style={black, inner sep=1.5pt, font=\scriptsize}
  }

  \begin{scope}
    \clip (-2.1,-2.1) rectangle (2.1,2.1);

    \fill[cellG] (-2,0) rectangle (0,2);  
    \fill[cellY] (-2,-2) rectangle (0,0); 
    \fill[cellP] (0,0) rectangle (2,2);   
    \fill[cellB] (0,-2) rectangle (2,0);  


    \draw[cut] (0,-2) -- (0,2);
    \draw[cut] (-2,0) -- (2,0);

    \draw[->] (-2.1,0) -- (2.2,0) node[right] {$y_1$};
    \draw[->] (0,-2.1) -- (0,2.2) node[above] {$y_2$};
  \end{scope}

  \node[site1] (y1) at (0,1)  {};
  \node[site2] (y2) at (0,0)  {};
  \node[site3] (y3) at (1,0)  {};

  \node[lab, anchor=west] at ($(y1)+(0.06,0.06)$) {$y_1$};
  \node[lab, anchor=west] at ($(y2)+(0.06,-0.06)$) {$y_2$};
  \node[lab, anchor=west] at ($(y3)+(0.06,-0.06)$) {$y_3$};

  \node[font=\small] at (0,2.35) {Cells induced on $\Y$};
\end{tikzpicture}
\end{minipage}\hfill
\begin{minipage}{0.31\linewidth}
\begin{tikzpicture}[>=Latex,scale=0.9]
\def\nodesize{6.5mm}
\def\xsep{2.2cm}
\def\yTop{1.7cm}
\def\yStep{1.1cm}

\tikzset{
  redcircle/.style  ={circle, draw=red!70, thick, fill=red!12,  minimum size=\nodesize, inner sep=0pt},
  bluecircle/.style ={circle, draw=blue!70, thick, fill=blue!12, minimum size=\nodesize, inner sep=0pt},
  lab/.style        ={font=\scriptsize, inner sep=0pt, outer sep=0pt},
  edge/.style       ={thick},
}
  \node[font=\small] at (0,2.35) {The graph $G_\Gamma$};
\node[redcircle] (A1) at (-\xsep, \yTop) {};
\node[lab] at (A1) {$x_1$}; 

\node[redcircle] (A2) at (-\xsep, \yTop-\yStep) {};
\node[lab] at (A2) {$x_2$}; 

\node[redcircle] (A3) at (-\xsep, \yTop-2*\yStep) {};
\node[lab] at (A3) {$x_3$}; 

\node[redcircle] (A4) at (-\xsep, \yTop-3*\yStep) {};
\node[lab] at (A4) {$x_4$}; 

\node[bluecircle] (B1) at (\xsep, \yTop) {};
\node[lab] at (B1) {$y_1$}; 

\node[bluecircle] (B2) at (\xsep, \yTop-\yStep) {};
\node[lab] at (B2) {$y_2$}; 

\node[bluecircle] (B3) at (\xsep, \yTop-2*\yStep) {};
\node[lab] at (B3) {$y_3$}; 

\draw[edge] (A1) -- (B1);
\draw[edge] (A1) -- (B2);
\draw[edge] (A1) -- (B3);

\draw[edge] (A2) -- (B1);
\draw[edge] (A2) -- (B2);

\draw[edge] (A3) -- (B2);
\draw[edge] (A3) -- (B3);

\draw[edge] (A4) -- (B2);

\end{tikzpicture}
\end{minipage}\hfill
\caption{Laguerre cell decompositions of the source (left) and target (middle) domains induced by an optimal dual vector, and corresponding graph $G_\Gamma$ (right).}
\label{fig:discrete-cell-example}
\end{figure}
In $\R^2$, let $\rho = \sum_{i=1}^4 \frac{1}{4}\delta_{x_i}$ and $\mu = \sum_{j=1}^3 \frac{1}{3} \delta_{y_j}$,  where $\{x_i\}_{i = 1}^4$ and $\{y_j\}_{j=1}^3$ are as in Figure \ref{fig:discrete-cell-example}. The points on the left picture of Figure \ref{fig:discrete-cell-example} correspond to the cells (or hyperplanes) in the centre picture, and vice versa. Non-uniqueness of the primal problem occurs precisely when there is a non-trivial cycle of the form point 1, hyperplane 1, point 2, hyperplane 2, $\cdots$ hyperplane $n$, point 1, where each point lies on the boundary of the hyperplanes it is next to in the cycle. Here, there exist three such cycles in $G_\Gamma$, given by
\begin{equation*}
    (x_1, y_2, x_2, y_1, x_1), \; (x_1, y_2, x_3, y_3, x_1), \; \text{ and } (x_1, y_3, x_3, y_2, x_2, y_1, x_1).
\end{equation*}
\end{example}
\chapter{Perturbation of support positions}
\label{ch: pertubations}

We now investigate how solutions to discrete optimal transport problems change when the support points $X$ and $Y$ are perturbed. This leads naturally to questions of stability for both primal and dual formulations.

\section{Discussion on how to use linear programming connections}

The following could lead to a better understanding of Problem \ref{problem: general plan stability}:

\begin{problem}
\label{problem: discrete plan stability}
    How does $\Gamma(X, Y)$ change as we perturb $X$ and $Y$? To what extent can we say that if $X, X'$ and $Y, Y'$ are close (and hence $W_2(\rho_X, \rho_{X'})$ and $W_2(\mu_Y, \mu_{Y'})$ are small), then $\Gamma_c(X, Y)$ and $\Gamma_c(X', Y')$ are quantitatively close with respect to an appropriate metric? We would like results of the form
\begin{equation}
\label{eq: stability of plans discrete goal statement}
    W_2(\gamma_{X_0,Y_0}, \gamma_{X_1, Y_1}) \leq C \Bigl(W_2(\rho_{X_0}, \rho_{X_1}) + W_2(\mu_{Y_0}, \mu_{Y_1}) \Bigr)^q + \mathcal{E}(X, Y, \alpha, \beta)
\end{equation}
for all $\gamma_{X_i, Y_i} \in \Gamma(X_i, Y_i)$, where $\mathcal{E}(X, Y, \alpha, \beta)$ is an error term to be understood depending on the structure of the measures.

If we fix $X$ and just vary $Y$ with all points staying inside the same compact set, can we arrive at an error $\E(X, \alpha)$ not depending on $Y$, which is, for example, very small when $\rho_X$ represents a very fine mesh discrete approximation of a regular absolutely continuous measure? Note here that in the spirit of known results in the absolutely continuous case, we should consider some constraint that all the coordinates of $Y$ live inside the same compact set $\Y$, and the error should be allowed to depend on $\Y$ also.\end{problem}

In general, quantitative stability cannot hold without some form of error term $\mathcal{E}$. Sometimes this error can be large, as the following example demonstrates.
\begin{example}
\label{ex: non uniqueness gives jump discontinuity}
Let \textcolor{sourcecolour}{$\rho_\varepsilon = \frac{1}{2} \delta_{(-1, \varepsilon)} + \frac{1}{2} \delta_{(1, -\varepsilon)}$} and \textcolor{targetcolour}{$\mu = \frac{1}{2} \delta_{(0, 1)} + \frac{1}{2} \delta_{(0 , -1)}$}, see Figure \ref{fig:stability-failure}. For $\varepsilon \neq 0$, there is a unique optimal plan $\gamma_\varepsilon$, but at $\varepsilon= 0$ we have non-uniqueness, with any convex combination of the two limiting cases giving an optimal plan.
\begin{figure}[h]
    \centering

\begin{tikzpicture}
    \draw (0, 1.5) node[above=3pt]{\scriptsize$\varepsilon>0$};
    \draw (-1.5, 0) -- (1.5, 0);
    \draw (0, -1.5) -- (0, 1.5);
    \draw[->, mixed, thick] (-0.85, 0.32) -- (-0.15, 0.88);
    \draw[->, mixed, thick] (0.85, -0.32) -- (0.15, -0.88);
    
    \draw (0,1) node[cross=3pt, targetcolour]{};
    \draw (0,1) node[anchor=west, targetcolour]{\scriptsize$(0, 1)$};
    
    \draw (0,-1) node[cross=3pt, targetcolour]{};
    \draw (0,-1) node[anchor=east, targetcolour]{\scriptsize$(0, -1)$};
    
    \draw (-1,0.2) node[cross=3pt, sourcecolour]{};
    \draw (-1, 0.2) node[below = 3pt, sourcecolour]{\scriptsize$(-1, \varepsilon)$};
    
    \draw (1,-0.2) node[cross=3pt, sourcecolour]{};
    \draw (1, -0.2) node[above=3pt, sourcecolour]{\scriptsize$(1, -\varepsilon)$};
\end{tikzpicture}
\hfill
\begin{tikzpicture}
    \draw (0, 1.5) node[above=3pt]{\scriptsize$\varepsilon=0$};
    \draw (-1.5, 0) -- (1.5, 0);
    \draw (0, -1.5) -- (0, 1.5);
    \draw[->, mixed, thick] (-0.85, -0.15) -- (-0.15, -0.85);
    \draw[->, mixed, thick] (-0.85, 0.15) -- (-0.15, 0.85);
    
    \draw[->, mixed, thick] (0.85, 0.15) -- (0.15, 0.85);
    \draw[->, mixed, thick] (0.85, -0.15) -- (0.15, -0.85);
    
    \draw (0,1) node[cross=3pt, targetcolour]{};
    
    \draw (0,-1) node[cross=3pt, targetcolour]{};
    
    \draw (-1,0) node[cross=3pt, sourcecolour]{};
    
    \draw (1,0) node[cross=3pt, sourcecolour]{};
\end{tikzpicture}
\hfill
\begin{tikzpicture}
    \draw (0, 1.5) node[above=3pt]{\scriptsize$\varepsilon<0$};
    \draw (-1.5, 0) -- (1.5, 0);
    \draw (0, -1.5) -- (0, 1.5);
    \draw[->, mixed, thick] (-0.85, -0.32) -- (-0.15, -0.88);
    \draw[->, mixed, thick] (0.85, 0.32) -- (0.15, 0.88);
    
    \draw (0,1) node[cross=3pt, targetcolour]{};
    \draw (0,1) node[anchor=east, targetcolour]{\scriptsize$(0, 1)$};
    
    \draw (0,-1) node[cross=3pt, targetcolour]{};
    \draw (0,-1) node[anchor=west, targetcolour]{\scriptsize$(0, -1)$};
    
    \draw (-1,-0.2) node[cross=3pt, sourcecolour]{};
    \draw (-1, -0.2) node[above = 3pt, sourcecolour]{\scriptsize$(-1, -\varepsilon)$};
    
    \draw (1,0.2) node[cross=3pt, sourcecolour]{};
    \draw (1, 0.2) node[below=3pt, sourcecolour]{\scriptsize$(1, \varepsilon)$};
\end{tikzpicture}

    \caption{ Optimal plans between \textcolor{sourcecolour}{$\rho_\varepsilon$} and \textcolor{targetcolour}{$\mu$}. }
    \label{fig:stability-failure}
\end{figure}
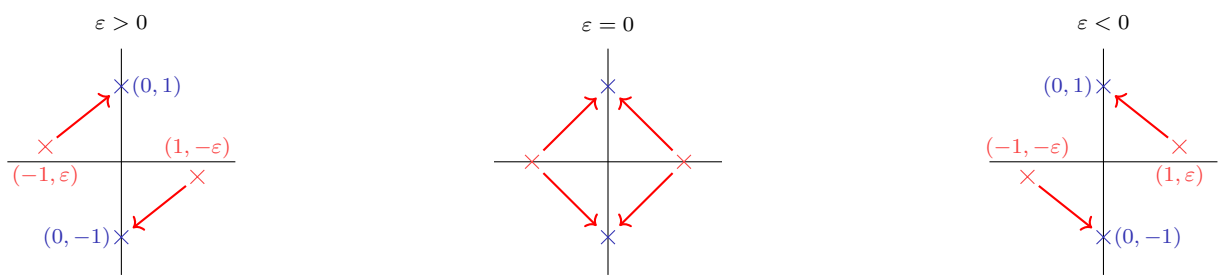
For $\varepsilon>0$, $W_2(\gamma_\varepsilon, \gamma_{-\varepsilon}) = 2$ while $W_2(\rho_\varepsilon, \rho_{-\varepsilon}) = 2\varepsilon$. The size of the jump discontinuity is $\lim_{\varepsilon \to 0} W_2(\gamma_\varepsilon, \gamma_{-\varepsilon}) = 2$, which is precisely the Wasserstein diameter of the set of optimal plans between $\rho_0$ and $\mu$. This is the worst it can be, as a consequence of qualitative stability.
\end{example}

Problem \ref{problem: discrete plan stability} is related to the stability of the linear programs \eqref{eq: C(X, Y) linear programming problem} and \eqref{eq: C(X, Y) dual linear programming problem}, with respect to perturbations of $C(X, Y)$. However, the geometry of linear programming; i.e. $\Pi(\alpha, \beta) \subset \R^{M \times N}$ equipped with some norm, and the geometry of $(\mathcal{P}_2(\R^d \times \R^d), W_2)$ are very different. In particular, in \eqref{eq: C(X, Y) linear programming problem}, all the geometric information $X$ and $Y$ concerning the ambient space $\R^d$ is embedded directly into the cost vector $C(X, Y)$, and bears no relation to the geometry of the polyhedron $\Pi(\alpha, \beta)$. Consider the mapping
\begin{equation*}
    \gamma : (\R^d)^M \times (\R^d)^N \times \Pi(\alpha, \beta) \to \mathcal{P}_2(\R^d \times \R^d); \quad (X, Y, \hat\gamma) \mapsto \gamma(X, Y),
\end{equation*}
where we denote
\begin{equation*}
    \gamma(X, Y)  := \sum_{i=1}^M \sum_{j=1}^N \hat\gamma_{ij} \delta_{x_i, y_j} \in \Pi(\rho_X, \mu_Y).
\end{equation*}
First, assume we are comparing two different transport plans in $\Pi(\rho_X, \mu_Y)$. Two distinct couplings $\hat\gamma_0, \hat\gamma_1 \in \Pi(\alpha, \beta)$ have a fixed Euclidean distance $\| \hat \gamma_0 - \hat \gamma_1\|_2>0$ in the polyhedron, whilst $W_2(\hat\gamma_0(X, Y), \hat\gamma_1(X, Y))$ can be made arbitrarily large or small by varying $X$ and $Y$. We note that the $l^1$ norm on $\Pi(\alpha, \beta) \subset \R^{M \times N}$ corresponds to the total variation:
\begin{equation*}
    \|\hat \gamma_0 - \hat \gamma_1\|_1 = \| \gamma_0(X, Y) - \gamma_1(X, Y)\|_{TV}
\end{equation*}

In general, however, we want to compare two plans, one in $\Pi(\rho_{X}, \mu_Y)$ and one in $\Pi(\rho_{X'}, \mu_{Y'})$, for different $X, X'$ and $Y, Y'$. In this case, neither $W_2(\hat\gamma_0(X, Y), \hat\gamma_1(X', Y'))$ nor  $\|\gamma_0(X, Y) - \gamma_1(X', Y')\|_{TV}$ are represented by the geometry of $\Pi(\alpha, \beta)$. It is unclear, then, that the quantitative stability results in linear programming can be directly applied to the primal problem to deduce those of a functional analysis flavour. Regardless, the linear programming formulation will give us some qualitative insight into how optimal plans deform as we vary $X$ and $Y$ continuously.

It is natural to ask an analogue of Problem \ref{problem: discrete plan stability} for the stability of the set of dual optimisers $\Phi$. This is relevant both for independent interest as well as that it may hopefully allow better understanding of Problem \ref{problem: discrete plan stability}.
\begin{problem}
    \label{problem: discrete potential stability}
    How do $\Phi_c(X, Y)$ and $\Psi_c(X, Y)$ change as we perturb $X$ and $Y$? If the supports are close so that $\|X - X'\|, \|Y - Y'\| \ll 1$ (and hence $W_2(\rho_X, \rho_{X'})$ and $W_2(\mu_Y, \mu_{Y'})$ are small), are $\Phi_c(X, Y)$ and $\Phi_c(X', Y')$ quantitatively close in some sense? We would like results of the form
\begin{equation}
\label{eq: stability of potentials discrete goal statement}
    d_{H, \|\cdot\|_\phi}(\Phi_c(X, Y), \Phi_c(X', Y')) \leq C \Bigl(W_2(\rho_{X_0}, \rho_{X_1}) + W_2(\mu_{Y_0}, \mu_{Y_1}) \Bigr)^q + \mathcal{E}
\end{equation}
where $d_{H, \|\cdot\|_\phi}$ is the Hausdorff distance between sets on $\R^N$ with respect to a choice of norm $\| \cdot \|_\phi$, and $\mathcal{E}$ is an error term to be understood based on the structure of the measures.
\end{problem}

A direct comparison between vectors $\phi \in \Phi(X, Y)$ and $\phi' \in \Phi(X', Y')$ for some norm on $\R^M$ has no direct interpretation if $X \neq X'$. If, however, $X = X'$, then norm distances do have a useful interpretation. For example
\begin{equation*}
    \| \phi \|_{\R^M, \alpha} := \sum_{i=1}^M \alpha_i |\phi_i|^2,
\end{equation*}
so that if $X = X'$, then for $\phi \in \Phi(X, Y)$ and $\phi' \in \Phi(X, Y')$ we have
\begin{equation*}
    \|\phi - \phi'\|_{\R^M, \alpha} = \|\phi^{**} - \phi^{'**}\|_{L^2(\rho)},
\end{equation*}
where here $\phi^{**}: \X \to \R$ corresponds to the double Legendre transform with respect to $X$ then $Y$, so that $\phi^{**}$ restricted to $X$ returns the vector $\phi$. (This is only true when $\phi$ was already assumed to be optimal; in general, the restriction of the double Legendre transform is just the restriction of the largest convex function which is less than $\phi_i$ at each $x_i$, which may not return $\phi_i$ itself.) Thus, a norm on $\R^N$ corresponds directly to an $L^2(\rho)$ distance between continuous potentials which is amenable to qualitative limits.

\begin{problem}
    Use the understanding gained from Problem \ref{problem: discrete plan stability} and \ref{problem: discrete potential stability} to take qualitative limits of \eqref{eq: stability of plans discrete goal statement} and \eqref{eq: stability of potentials discrete goal statement} to say things about general transport problems beyond the discrete case. In this way, a sequence of linear programs which may a priori seem unrelated can correspond to weakly converging marginal sequences of discrete measures for which \eqref{eq: stability of plans discrete goal statement} may hopefully be stable.
\end{problem}

Given that the norm structure on $D(C)$ is much more useful than that of $\Pi(\alpha, \beta)$, to approach Problem \ref{problem: discrete plan stability} it seems reasonable to first understand component wise effects on $\phi \in \Phi(X, Y)$ of perturbing $Y$ through the explicit characterisation given in Proposition \ref{prop: dual set characterisation}. The $\phi_i$ values correspond to the translations of hyperplanes with gradient $x_i$, casting this over the target domain we could hope to understand how the Laguerre cells induced on the target domain evolve. Since there should be the same amount of mass in each cell, we could use this information to construct a coupling between two different transport plans with the same first marginal, giving an estimate on $W_2(\gamma_{X, Y}, \gamma_{X, Y'})$.

\section{Structure of primal optimisers under deformations}

In this section, we provide a qualitative explanation of how the set of optimisers $\Gamma_c(X, Y)$ defined in \eqref{eq: discrete transport problem X and Y} deforms under Perturbations of $X$ and $Y$. Intuitively, the idea is as follows: if we perturb the positions of each of the masses continuously such that at each point in time, the transport problem has \textbf{a unique solution}, then the structure of the optimal plan remains the same. For the structure of the optimal plan to change, it must be that at some point, one way of transporting becomes better than another, and at the point of this change, the before plan and after plan must have the same cost, contradicting uniqueness.

We first present a direct proof in the language of measure-theoretic optimal transport using the $c$-cyclical monotonicity \eqref{intro: c cm definition statment}. We then describe how this connects to uniqueness in linear programming through the characterisation of extreme points of $\Pi(\alpha, \beta)$, Lemma \ref{lemma: characterisation of extreme points of primal polyhedra}.

\begin{definition}[Glue-composition of transport plans]
    Let $\rho, \mu_0, \mu_1$ be probability measures on $\R^d$ and consider transport plans $\gamma_0 \in \Pi(\rho, \mu_0)$ and $\pi \in \Pi(\mu_0, \mu_1)$. Then $\gamma_1 \in \Pi(\rho, \mu_1)$ is said to be a \textit{glue-composition} of $\gamma_0$ and $\pi$ if there exists a measure $\eta \in \Pi(\rho, \mu_0, \mu_1)$ with $p_{12\#}\eta = \gamma_0$, $p_{23\#}\eta = \pi$ and $p_{13\#}\eta = \gamma_1$. Here $p_{ij}$ represents the canonical projection onto the $i$th and $j$th coordinates. 
    We denote by
    \begin{equation*}
        \gamma_0 \circ \pi \subset \Pi(\rho, \mu_1)
    \end{equation*}
    the set of all possible glue-compositions of $\gamma_0$ and $\pi$. There always exists at least one way to glue transport plans with a common marginal, since one can always take $\gamma_{0, y} \otimes \pi_y \di \mu_0(y)$ where $\gamma_{0, y}$ and $\pi_y$ are disintegrations of $\gamma_0$ and $\pi$ in the second/first coordinates respectively, see \cite[Lemma 5.5]{santambrogio2015optimal} for more details. Unlike the composition of maps, glue-compositions need not be unique, see Example \ref{ex: glueing nonuniqueness}.
\end{definition}

\begin{example}[Non-uniqueness of glueing]
\label{ex: glueing nonuniqueness}
Consider the transport problem illustrated in Figure \ref{fig:glueing-nonuniqueness} between $\rho_X$ and $\mu_{Y(t)}$ where
\begin{equation*}
    \alpha = (3/4, 1/4),\quad \beta = (1/2, 1/4, 1/4),
\end{equation*}
\begin{equation*}
    X = (A, B),\; Y_0 = (C, D, D),\; Y_1 = (E, F, G),\; Y(t) = (1-t)Y_0 + tY_1.
\end{equation*}
There are multiple glue-compositions of the optimal plan $\gamma \in \Pi(\rho_X, \mu_{Y_0})$ with the optimal plan $\pi_t \in \Pi(\mu_{Y_0}, \mu_{Y(t)})$. Only one of them is the optimal plan for quadratic cost between $\rho_X$ and $\mu_{Y(t)}$ (the middle diagram).
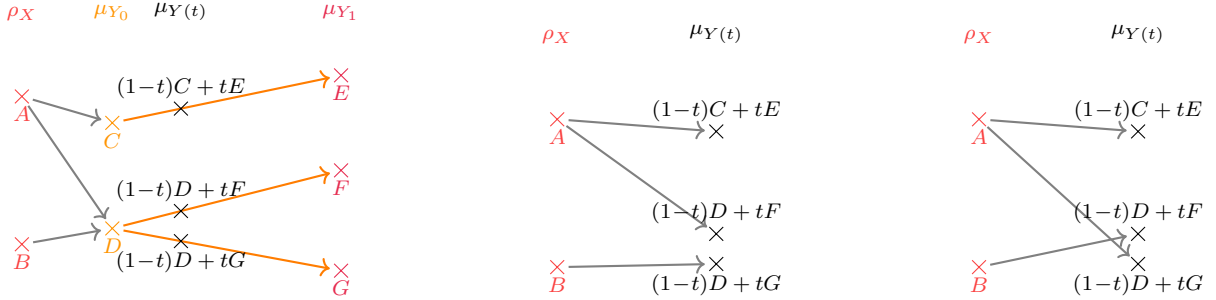
\begin{figure}[ht]
    \centering

\begin{tikzpicture}[scale=0.7]

    \draw (-3, 2.5) node[above=3pt]{\scriptsize\textcolor{sourcecolour}{$\rho_X$}};
    \draw (-3, 1.4) node[cross=3pt, sourcecolour]{};
    \draw (-3,1.4) node[anchor=north, sourcecolour]{\scriptsize$A$};
    \draw (-3,-1.4) node[cross=3pt, sourcecolour]{};
    \draw (-3,-1.4) node[anchor=north, sourcecolour]{\scriptsize$B$};

    \draw[->, mixed, thick, color=gray, shorten >=0.15cm, shorten <=0.15cm] (-3, 1.4) -- (-1.3,0.9);
    \draw[->, mixed, thick, color=gray, shorten >=0.15cm, shorten <=0.15cm] (-3,-1.4) -- (-1.3, -1.1);
    \draw[->, mixed, thick, color=gray, shorten >=0.15cm, shorten <=0.15cm] (-3, 1.4) -- (-1.3, -1.1);
    
    \draw (-1.3, 2.5) node[above=3pt]{\scriptsize\textcolor{amber}{$\mu_{Y_0}$}};
    \draw (-1.3,0.9) node[cross=3pt, amber]{};
    \draw (-1.3,0.9) node[anchor=north, amber]{\scriptsize$C$};
    \draw (-1.3, -1.1) node[cross=3pt, amber]{};
    \draw (-1.3, -1.1) node[anchor=north, amber]{\scriptsize$D$};

    \draw[->, mixed, thick, color=amber2, shorten >=0.15cm, shorten <=0.15cm] (-1.3,0.9) -- (3,1.8);
    \draw[->, mixed, thick, color=amber2, shorten >=0.15cm, shorten <=0.15cm] (-1.3, -1.1) -- (3, 0);
    \draw[->, mixed, thick, color=amber2, shorten >=0.15cm, shorten <=0.15cm] (-1.3, -1.1) -- (3, -1.9);

    \draw (-0.01, 2.5) node[above=3pt]{\scriptsize\textcolor{black}{$\mu_{Y(t)}$}};
    \draw (-0.01,1.17) node[cross=3pt, black]{};
    \draw (-0.01,1.17) node[anchor=south, black]{\scriptsize$(1\!-\!t)C + tE$};
    \draw (-0.01,-0.77) node[cross=3pt, black]{};
    \draw (-0.01,-0.77) node[anchor=south, black]{\scriptsize$(1\!-\!t)D + tF$};
    \draw (-0.01,-1.34) node[cross=3pt, black]{};
    \draw (-0.01,-1.34) node[anchor=north, black]{\scriptsize$(1\!-\!t)D + tG$};

    \draw (3, 2.5) node[above=3pt]{\scriptsize\textcolor{amaranth}{$\mu_{Y_1}$}};
    \draw (3,1.8) node[cross=3pt, amaranth]{};
    \draw (3,1.8) node[anchor=north, amaranth]{\scriptsize$E$};
    \draw (3, 0) node[cross=3pt, amaranth]{};
    \draw (3,0) node[anchor=north, amaranth]{\scriptsize$F$};
    \draw (3, -1.9) node[cross=3pt, amaranth]{};
    \draw (3,-1.9) node[anchor=north, amaranth]{\scriptsize$G$};
    \end{tikzpicture}
\hfill
\begin{tikzpicture}[scale=0.7]

    \draw (-3, 2.5) node[above=3pt]{\scriptsize\textcolor{sourcecolour}{$\rho_X$}};
    \draw (-3, 1.4) node[cross=3pt, sourcecolour]{};
    \draw (-3,1.4) node[anchor=north, sourcecolour]{\scriptsize$A$};
    \draw (-3,-1.4) node[cross=3pt, sourcecolour]{};
    \draw (-3,-1.4) node[anchor=north, sourcecolour]{\scriptsize$B$};

    \draw[->, mixed, thick, color=gray, shorten >=0.15cm, shorten <=0.15cm] (-3, 1.4) -- (-0.01, 1.17);
    \draw[->, mixed, thick, color=gray, shorten >=0.15cm, shorten <=0.15cm] (-3, 1.4) -- (-0.01, -0.77);
    \draw[->, mixed, thick, color=gray, shorten >=0.15cm, shorten <=0.15cm] (-3, -1.4) -- (-0.01, -1.34);

    \draw (-0.01, 2.5) node[above=3pt]{\scriptsize\textcolor{black}{$\mu_{Y(t)}$}};
    \draw (-0.01,1.17) node[cross=3pt, black]{};
    \draw (-0.01,1.17) node[anchor=south, black]{\scriptsize$(1\!-\!t)C + tE$};
    \draw (-0.01,-0.77) node[cross=3pt, black]{};
    \draw (-0.01,-0.77) node[anchor=south, black]{\scriptsize$(1\!-\!t)D + tF$};
    \draw (-0.01,-1.34) node[cross=3pt, black]{};
    \draw (-0.01,-1.34) node[anchor=north, black]{\scriptsize$(1\!-\!t)D + tG$};

\end{tikzpicture}
\hfill
\begin{tikzpicture}[scale=0.7]

    \draw (-3, 2.5) node[above=3pt]{\scriptsize\textcolor{sourcecolour}{$\rho_X$}};
    \draw (-3, 1.4) node[cross=3pt, sourcecolour]{};
    \draw (-3,1.4) node[anchor=north, sourcecolour]{\scriptsize$A$};
    \draw (-3,-1.4) node[cross=3pt, sourcecolour]{};
    \draw (-3,-1.4) node[anchor=north, sourcecolour]{\scriptsize$B$};


    \draw[->, mixed, thick, color=gray, shorten >=0.15cm, shorten <=0.15cm] (-3, 1.4) -- (-0.01, 1.17);
    \draw[->, mixed, thick, color=gray, shorten >=0.15cm, shorten <=0.15cm] (-3, 1.4) -- (-0.01, -1.34);
    \draw[->, mixed, thick, color=gray, shorten >=0.15cm, shorten <=0.15cm] (-3, -1.4) -- (-0.01, -0.77);

    \draw (-0.01, 2.5) node[above=3pt]{\scriptsize\textcolor{black}{$\mu_{Y(t)}$}};
    \draw (-0.01,1.17) node[cross=3pt, black]{};
    \draw (-0.01,1.17) node[anchor=south, black]{\scriptsize$(1\!-\!t)C + tE$};
    \draw (-0.01,-0.77) node[cross=3pt, black]{};
    \draw (-0.01,-0.77) node[anchor=south, black]{\scriptsize$(1\!-\!t)D + tF$};
    \draw (-0.01,-1.34) node[cross=3pt, black]{};
    \draw (-0.01,-1.34) node[anchor=north, black]{\scriptsize$(1\!-\!t)D + tG$};

\end{tikzpicture}

\caption{Measures $\rho_X$, $\mu_{Y_0}$, $\mu_{Y(t)}$ and $\mu_{Y_1}$ and couplings between them (left); and two potential glue-compositions of $\gamma_0$ and $\pi_t$ (middle and right).}
    \label{fig:glueing-nonuniqueness}
\end{figure}
\end{example}

\begin{theorem}[Structure of discrete plans along trajectories preserving uniqueness]
\label{thrm: structure of discrete plans new notation}
Let $c: \R^d \times \R^d \to \R_+$ be continuous. Let $X: [0, 1] \to (\R^d)^M$ and $Y : [0, 1] \to (\R^d)^N$ be continuous curves such that
\begin{enumerate}[label = (\roman*)]
    \item \label{enum: distinct points (0, 1)} For each $t \in (0, 1)$ and any distinct indices $i \neq j$, $x_i(t) \neq x_j(t)$ and $y_i(t) \neq y_j(t)$.
    \item \label{enum: unique solutions for each pb} For each $t \in [0, 1]$ solutions to the $c$-cost transport between $\rho_{X(t)}$ and $\mu_{Y(t)}$ are unique.
\end{enumerate}
Then there exists $\hat{\gamma} \in \Pi(\alpha, \beta)$ such that
\begin{equation*}
    \Gamma_c(X(t), Y(t)) = \left\{ \sum_{i = 1}^{M}\sum_{j=1}^N \hat\gamma_{ij} \delta_{x_i(t), y_j(t)} \right\} \text{ for all } t \in [0, 1],
\end{equation*}
so that the ``same" plan is the unique optimal plan for all $t \in [0, 1]$. In other words, given the optimal plan for some $s \in (0, 1)$, the unique optimal plan for each other $t \in [0, 1]$ is a glue-composition (note there can be multiple glue compositions here, but only one can be optimal since we assume uniqueness) of $ \pi_{ts}^\rho \circ \gamma(s) \circ \pi_{st}^\mu$ where
\begin{equation*}
    \pi_{ts}^\rho := \sum_{i = 1}^M \alpha_i \delta_{x_i(t), x_i(s)} \in \Pi(\rho_{X(t)}, \rho_{X(s)})\; \text{ and } \; \pi_{st}^\mu :=\sum_{i = 1}^M \beta_i \delta_{y_i(s), y_i(t)} \in \Pi(\mu_{Y(s)}, \mu_{Y(t)}).
\end{equation*}
\end{theorem}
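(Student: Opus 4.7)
The plan is to use a connectedness argument on $[0,1]$ together with the characterisation of uniqueness via strict cyclical monotonicity from Proposition \ref{prop: uniqueness iff strict cm}. Pick any reference time $s \in (0,1)$, and let $\gamma(s) \in \Gamma_c(X(s), Y(s))$ be the unique optimiser given by hypothesis \ref{enum: unique solutions for each pb}. Thanks to hypothesis \ref{enum: distinct points (0, 1)}, the points of $X(s)$ and $Y(s)$ are pairwise distinct, so there is a unique $\hat\gamma \in \Pi(\alpha,\beta)$ with $\gamma(s) = \sum_{i,j} \hat\gamma_{ij} \delta_{x_i(s), y_j(s)}$. For each $t \in [0,1]$ write $\gamma^{\hat\gamma}(t) := \sum_{i,j} \hat\gamma_{ij} \delta_{x_i(t), y_j(t)} \in \Pi(\rho_{X(t)}, \mu_{Y(t)})$ for the natural ``transported'' candidate, and define the set
\begin{equation*}
    T := \{\, t \in (0,1) : \gamma^{\hat\gamma}(t) \in \Gamma_c(X(t), Y(t)) \,\}.
\end{equation*}
The aim is to prove $T = (0,1)$ by showing it is non-empty, open, and closed; the endpoints $t=0,1$ will then be handled separately.

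Non-emptiness is immediate since $s \in T$. For openness, fix $t_0 \in T$ and apply Proposition \ref{prop: uniqueness iff strict cm}\ref{enum: strict gives uniqueness}: since the problem at $t_0$ has a unique solution, the strict cyclical monotonicity gap $\Delta_P(t_0) > 0$ for every distinct cycle $P \subset \supp \gamma^{\hat\gamma}(t_0)$ of length at least $2$. There are only finitely many such cycles, indexed by subsets of $\{(i,j):\hat\gamma_{ij} > 0\}$ together with a cyclic permutation, and on each of them the map $t \mapsto \Delta_P(t)$ is continuous thanks to the continuity of $c$ and of $X, Y$; moreover \ref{enum: distinct points (0, 1)} ensures the cycle remains distinct in a neighbourhood of $t_0$. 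Hence all strict inequalities persist on an open neighbourhood of $t_0$, which by Proposition \ref{prop: uniqueness iff strict cm}\ref{enum: strict gives uniqueness} shows $\gamma^{\hat\gamma}(t)$ is the unique optimiser there. For closedness, take $t_n \in T$ with $t_n \to t_\infty \in (0,1)$: the weak inequalities $\Delta_P(t_n) \geq 0$ from Proposition \ref{prop: uniqueness iff strict cm}\ref{enum: weak inequality} pass to the limit by continuity, so $\gamma^{\hat\gamma}(t_\infty)$ is optimal and $t_\infty \in T$. Since $(0,1)$ is connected, $T = (0,1)$.

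For the endpoints $t = 0$ and $t = 1$, distinctness may fail, but the argument for closedness still applies: the weak inequalities $\Delta_P \geq 0$ hold for every distinct cycle in $\supp \gamma^{\hat\gamma}(t)$ (with $t \in \{0,1\}$) by taking the limit along $t_n \to t$ inside $(0,1)$, so $\gamma^{\hat\gamma}(t)$ is optimal, and by uniqueness at $t$ it is \emph{the} optimiser. The glue-composition description follows from the observation that the measure $\sum_{i,j} \hat\gamma_{ij}\, \delta_{(x_i(t),\, x_i(s),\, y_j(s),\, y_j(t))}$ on $(\R^d)^4$ realises $\pi_{ts}^\rho$, $\gamma^{\hat\gamma}(s)$, and $\pi_{st}^\mu$ as its successive bivariate marginals, and projects to $\gamma^{\hat\gamma}(t)$ in the first and fourth coordinates. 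The main obstacle is the bookkeeping between the combinatorial object $\hat\gamma \in \Pi(\alpha,\beta)$ and the measure $\gamma^{\hat\gamma}(t) \in \Pi(\rho_{X(t)},\mu_{Y(t)})$, together with the need to verify that distinctness of cycles in $\supp \gamma^{\hat\gamma}(t_0)$ is stable under small perturbations of $t_0 \in (0,1)$; both are handled by the hypothesis \ref{enum: distinct points (0, 1)} and the continuity of $X, Y$.
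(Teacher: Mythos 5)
Your proof is correct and takes essentially the same approach as the paper: both fix a reference time $s$, reduce optimality and uniqueness along the curve to the sign of the cyclical-monotonicity gaps $\Delta_P(t)$ via Proposition \ref{prop: uniqueness iff strict cm}, and propagate positivity by continuity and finiteness of the cycle family. The only cosmetic difference is that you package the propagation as a clopen/connectedness argument on $(0,1)$ and handle the endpoints by passing $\Delta_P \geq 0$ to the limit, whereas the paper argues by contradiction at the first time $\underline\Delta$ vanishes and invokes qualitative stability for $t\in\{0,1\}$; the glue-composition measure you exhibit is the same one used there.
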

\begin{proof}
Fix some $s \in (0, 1)$ and let $\gamma(s)$ be the unique optimal plan between $\rho_{X(s)}$ and $\mu_{Y(s)}$ for cost $c$, written in the form
\begin{equation}
\label{eq: gamma(s) definion}
    \gamma(s) = \sum_{i = 1}^{M}\sum_{j=1}^N \hat\gamma_{ij}(s) \delta_{x_i(s), y_j(s)}. \quad \text{ (Here some $\hat\gamma_{ij}(s)$ may be zero.)}
\end{equation}
Here a priori, the $\hat\gamma_{ij}(s)$ \textit{depend on the choice of $s$}. We want to show that they are independent of $s$, so that the same $\gamma(t)$ is optimal for all  $t \in [0, 1]$, rather than just $t = s$. The ways to choose a finite number of ways to choose distinct $\hat P \subset \hat\gamma(s)$ and distinct $P \subset \gamma(t)$ are in bijection, and there are a finite number of possibilities $r$. We denote these by $\{P_k(t)\}_{k=1}^r$, where each is of the form
\begin{equation*}
    P_k(t) = \{(x_{i_l}(t), y_{j_l}(t))\}_{l=1}^{L_k} \quad \text{ with }\quad x_{i_{l}}(t) \neq x_{i_{l'}}(t), y_{j_l}(t) \neq y_{j_{l'}}(t)\; \forall l \neq l', t \in (0, 1).
\end{equation*}
By Proposition \ref{prop: uniqueness iff strict cm}, $\Delta_{P_k(s)} >0$, and since $c$, $X(t)$ and $Y(t)$ are continuous, so is $t \mapsto \Delta_{P_k(t)}$. As there are a finite number of distinct support selections,
\begin{equation*}
    \underline{\Delta}(t) : = \inf_{k = 1,..., r} \Delta_{P_k(t)}
\end{equation*}
is continuous and $\underline{\Delta}(s)>0$. Set
\begin{equation*}
    t^+ = \inf \left\{ t \geq s : \underline{\Delta}(t) = 0 \right\} \quad \text{ and } t^- = \sup\left\{t \leq s : \underline{\Delta}(t) = 0\right\}.
\end{equation*}
Assume $t^- \in (0, s)$, then there exists $P \subset \supp \gamma(t^-)$ such that $\Delta_P = 0$, and by continuity and definition of $t^-$ as a supremum, $\Delta_{P'} \geq 0$ for all other $P' \subset \supp \gamma(t^-)$. Proposition \ref{prop: uniqueness iff strict cm} implies both that $\gamma(t^-)$ is optimal and that it is not the only optimiser, contradicting the non-uniqueness assumption. Hence, it must be that $\underline{\Delta}(t) > 0$ for all $t \in (0,s)$. Arguing symmetrically for $t^+$, we deduce that $\underline{\Delta}(t) > 0$ for all $t \in (0,1)$, and hence is the unique optimiser. This extends to $t \in [0, 1]$ by qualitative stability.

To explicit the glueing structure, for $s \in (0, 1)$ and $t \in [0, 1]$ consider the measure
\begin{equation}
\label{eq: G(s, t) definition glueing measure}
    \eta(s,t) := \sum_{i=1}^M \sum_{j = 1}^N \hat\gamma_{ij} \delta_{x_i(t), x_i(s), y_j(s), y_j(t)} \in \Pi(\rho_{X(t)}, \rho_{X(s)}, \mu_{Y(s)}, \mu_{Y(t)}).
\end{equation}
It follows that
\begin{equation*}
    p_{12\#}\eta(s, t) = \pi^\rho_{ts};\quad p_{23\#} \eta(s, t) = \gamma(s); \quad \text{ and } p_{34\#} \eta(s, t) = \pi^\mu_{st}
\end{equation*}
and
\begin{equation*}
    p_{14\#} \eta(s, t) = \gamma(t) \quad\forall t \in [0, 1].
\end{equation*}
Thus, we have shown that the optimal plan over $\Pi(\rho_t, \mu_t)$ is given by glue composition, supported by the measure $\eta(s, t)$.
\end{proof}

\begin{remark}
The connection between Theorem \ref{thrm: structure of discrete plans new notation} and the linear programming formulation is made clear by the characterisation of extreme points of $\Pi(\alpha, \beta)$, Lemma \ref{lemma: characterisation of extreme points of primal polyhedra}. Since the weight vectors $\alpha \in \R^N$ and $\beta \in \R^M$ are fixed, varying $X$ and $Y$ corresponds to varying the direction of the cost vector $C(X, Y)$ inside $\Pi(\alpha, \beta)$. So long as we move $C(X, Y)$ in a way which preserves non-orthogonality to the face of $\Pi(\alpha, \beta)$ in the direction $-C$, we retain uniqueness, and the same vertex (and hence ``same" plan) is optimal. If at some point $-C(X, Y)$ is orthogonal to a face, this entire face will be optimal (non-uniqueness). One of the vertices on this face must necessarily be the unique optimal vertex from which we arrived (this is qualitative stability/upper semicontinuity). When we are on the interior of this face, the graph $G(\hat\gamma)$ has cycles, which correspond to multiple possible permutations of the support being optimal (we can construct multiple optimal plans à la $c$-cyclical monotonicity).
One cannot perturb $C$ continuously such that we have uniqueness all along the trajectory, with a different vertex $\hat\gamma_1 \in \Pi(\alpha, \beta)$ optimal at the end compared to $\hat\gamma_0$ at the beginning: If
\begin{equation*}
    \langle \hat \gamma_0 | C(0) \rangle > \langle\hat\gamma_1 |C(0) \rangle \quad \text{ and } \quad \langle \hat \gamma_1 | C(1) \rangle > \langle\hat\gamma_0 |C(1) \rangle,
\end{equation*}
then by continuity of $C(t)$, the intermediate value theorem tells us there exists $t \in [0, 1]$ with
\begin{equation*}
    \langle \hat \gamma_0 | C(t) \rangle = \langle \hat \gamma_1 | C(t) \rangle \implies \langle \langle \hat \gamma_0 - \hat\gamma_1 | C(t) \rangle =0.
\end{equation*}
Thus $C(t)$ is orthogonal to the $1$ dimensional edge connecting $\hat\gamma_0$ and $\hat \gamma_1$.

There is, however, no direct equivalence between the uniqueness of the measure-theoretic and linear programming formulations. If for some $i_0, i_1$ or $j_0, j_1$ we have $x_{i_0} = x_{i_1}$ or $y_{j_0} = y_{j_1}$, then we have non-uniqueness of the Linear program \eqref{eq: C(X, Y) linear programming problem} whilst the measure theoretic problem \eqref{eq: discrete transport problem X and Y} could retain uniqueness. Thus, we can pass from one vertex to another being optimal in $\Pi(\alpha, \beta)$ whilst $\Gamma(t)$ remains a singleton. This is illustrated in the example below.
\end{remark}
\begin{example}[Intersecting particles]
For interpolations $X(t) = (1-t) X_0 + t X_1$ and $Y(t) = (1-t) Y_0 + t Y_1$ that correspond to quadratic geodesic interpolations in the sense of Remark \ref{rmk: quadratic interpolants particles dont cross}, the non-intersecting particle assumption \ref{enum: distinct points (0, 1)} is guaranteed. Without this assumption, consider
\begin{equation*}
    \alpha = \beta = (1/2, 1/2),
\end{equation*}
\begin{equation*}
    X= (A, B), Y_0 = (C, D), Y_1 = (F, E),
\end{equation*}
as illustrated in Figure \ref{fig:intersecting transport rays}. We retain uniqueness all along the trajectory, but after $t=1/2$ when $y_1(t) = y_2(t)$, the glueing no longer corresponds to the optimal coupling.

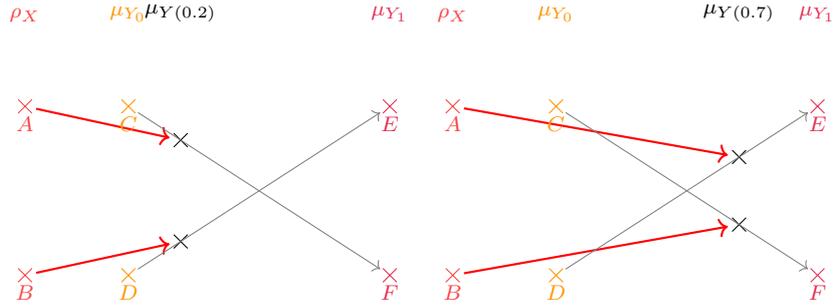
\begin{figure}[ht]
    \centering

\begin{tikzpicture}[scale=0.8]

    \draw (-3, 2.5) node[above=3pt]{\scriptsize\textcolor{sourcecolour}{$\rho_X$}};
    \draw (-3, 1.4) node[cross=3pt, sourcecolour]{};
    \draw (-3,1.4) node[anchor=north, sourcecolour]{\scriptsize$A$};
    \draw (-3,-1.4) node[cross=3pt, sourcecolour]{};
    \draw (-3,-1.4) node[anchor=north, sourcecolour]{\scriptsize$B$};
    
    \draw[->, mixed, thick, color=red, shorten >=0.15cm, shorten <=0.15cm] (-3,-1.4) -- (-0.44, -0.84);
    \draw[->, mixed, thick, color=red, shorten >=0.15cm, shorten <=0.15cm] (-3, 1.4) -- (-0.44,0.84);
    
    \draw (-0.44, 2.5) node[above=3pt]{\scriptsize\textcolor{black}{$\mu_{Y({0.2})}$}};
    \draw (-0.44,0.84) node[cross=3pt, black]{};
    \draw (-0.44, -0.84) node[cross=3pt, black]{};

    \draw (-1.3, 2.5) node[above=3pt]{\scriptsize\textcolor{amber}{$\mu_{Y_0}$}};
    \draw (-1.3,1.4) node[cross=3pt, amber]{};
    \draw (-1.3,1.4) node[anchor=north, amber]{\scriptsize$C$};
    \draw (-1.3, -1.4) node[cross=3pt, amber]{};
    \draw (-1.3, -1.4) node[anchor=north, amber]{\scriptsize$D$};

    
    \draw[->, mixed, thin, color=gray, shorten >=0.15cm, shorten <=0.15cm] (-1.3, 1.4) -- (3, -1.4); 
    \draw[->, mixed, thin, color=gray, shorten >=0.15cm, shorten <=0.15cm] (-1.3, -1.4) -- (3, 1.4); 

    \draw (3, 2.5) node[above=3pt]{\scriptsize\textcolor{amaranth}{$\mu_{Y_1}$}};
    \draw (3,1.4) node[cross=3pt, amaranth]{};
    \draw (3,1.4) node[anchor=north, amaranth]{\scriptsize$E$};
    \draw (3, -1.4) node[cross=3pt, amaranth]{};
    \draw (3,-1.4) node[anchor=north, amaranth]{\scriptsize$F$};
\end{tikzpicture}
\begin{tikzpicture}[scale=0.8]

    \draw (-3, 2.5) node[above=3pt]{\scriptsize\textcolor{sourcecolour}{$\rho_X$}};
    \draw (-3, 1.4) node[cross=3pt, sourcecolour]{};
    \draw (-3,1.4) node[anchor=north, sourcecolour]{\scriptsize$A$};
    \draw (-3,-1.4) node[cross=3pt, sourcecolour]{};
    \draw (-3,-1.4) node[anchor=north, sourcecolour]{\scriptsize$B$};

    \draw[->, mixed, thick, color=red, shorten >=0.15cm, shorten <=0.15cm] (-3,-1.4) -- (1.71, -0.56);
    \draw[->, mixed, thick, color=red, shorten >=0.15cm, shorten <=0.15cm] (-3, 1.4) -- (1.71,0.56);
    
    \draw (1.71, 2.5) node[above=3pt]{\scriptsize\textcolor{black}{$\mu_{Y({0.7})}$}};
    \draw (1.71,0.56) node[cross=3pt, black]{};
    \draw (1.71, -0.56) node[cross=3pt, black]{};

    \draw (-1.3, 2.5) node[above=3pt]{\scriptsize\textcolor{amber}{$\mu_{Y_0}$}};
    \draw (-1.3,1.4) node[cross=3pt, amber]{};
    \draw (-1.3,1.4) node[anchor=north, amber]{\scriptsize$C$};
    \draw (-1.3, -1.4) node[cross=3pt, amber]{};
    \draw (-1.3, -1.4) node[anchor=north, amber]{\scriptsize$D$};

    
    \draw[->, mixed, thin, color=gray, shorten >=0.15cm, shorten <=0.15cm] (-1.3, 1.4) -- (3, -1.4); 
    \draw[->, mixed, thin, color=gray, shorten >=0.15cm, shorten <=0.15cm] (-1.3, -1.4) -- (3, 1.4); 

    \draw (3, 2.5) node[above=3pt]{\scriptsize\textcolor{amaranth}{$\mu_{Y_1}$}};
    \draw (3,1.4) node[cross=3pt, amaranth]{};
    \draw (3,1.4) node[anchor=north, amaranth]{\scriptsize$E$};
    \draw (3, -1.4) node[cross=3pt, amaranth]{};
    \draw (3,-1.4) node[anchor=north, amaranth]{\scriptsize$F$};
\end{tikzpicture}

    \caption{Optimal couplings between $\rho_X$ and $\mu_{Y(t)}$ for $t=0.2$ and $t=0.7$.}
    \label{fig:intersecting transport rays}
\end{figure}
\end{example}

\begin{lemma}
    Under the hypotheses of Theorem \ref{thrm: structure of discrete plans new notation}, let $X(t)$ and $Y(t)$ be of the form $X(t) = (1-t)X_1 + t X_2$, $Y(t) = (1-t)Y_1 + t Y_2$ for $X_i \in (\R^d)^M$ and $Y_i \in (\R^d)^N$. Then there exists a glueing measure $\eta(0, 1) \in \Pi(\rho_{X_1}, \rho_{X_0}, \mu_{Y_0}, \mu_{Y_1})$ which describes all optimal plans between $\rho_{X(t)}$ and $\mu_{Y(t)}$ simultaneously.
\end{lemma}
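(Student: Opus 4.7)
The plan is to take as the candidate the same gluing measure that appeared implicitly in the proof of Theorem \ref{thrm: structure of discrete plans new notation}, but now one single copy of it serving \emph{all} intermediate times. Set
$$\eta(0,1) := \sum_{i=1}^M \sum_{j=1}^N \hat\gamma_{ij}\, \delta_{x_i(1),\, x_i(0),\, y_j(0),\, y_j(1)},$$
where $\hat\gamma \in \Pi(\alpha,\beta)$ is the uniform coupling provided by Theorem \ref{thrm: structure of discrete plans new notation}. First I would verify the four marginal conditions directly: the first and second marginals each become $\sum_i (\sum_j \hat\gamma_{ij})\delta_{x_i(\cdot)} = \sum_i \alpha_i \delta_{x_i(\cdot)}$ and similarly the third and fourth become $\mu_{Y(0)}$ and $\mu_{Y(1)}$, so $\eta(0,1) \in \Pi(\rho_{X(1)},\rho_{X(0)},\mu_{Y(0)},\mu_{Y(1)})$.

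The main new ingredient beyond the theorem is the linearity of $X(t)$ and $Y(t)$. Because $x_i(t) = (1-t)x_i(0) + t\,x_i(1)$ and $y_j(t) = (1-t)y_j(0) + t\,y_j(1)$, the affine ``readout" map
$$L_t : (\R^d)^4 \to \R^d \times \R^d,\qquad L_t(a,b,c,d) := \bigl((1-t)b + ta,\; (1-t)c + td\bigr),$$
sends every atom $(x_i(1),x_i(0),y_j(0),y_j(1))$ of $\eta(0,1)$ to the atom $(x_i(t),y_j(t))$. Pushing forward therefore yields
$$(L_t)_{\#}\eta(0,1) \;=\; \sum_{i,j} \hat\gamma_{ij}\,\delta_{x_i(t),\,y_j(t)} \;=\; \gamma(t),$$
the last equality being precisely the conclusion of Theorem \ref{thrm: structure of discrete plans new notation}. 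Consistency at the endpoints is a good sanity check: $L_0(a,b,c,d)=(b,c)$ recovers $p_{23\#}\eta(0,1) = \gamma(0)$, and $L_1(a,b,c,d)=(a,d)$ recovers $p_{14\#}\eta(0,1) = \gamma(1)$.

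I do not expect any substantial obstacle here, since the difficult step (existence of a common coupling matrix $\hat\gamma$ valid for every $t$) is already carried out by Theorem \ref{thrm: structure of discrete plans new notation}; the linearity of the interpolation is then what converts the family of pushforwards $(L_t)_{\#}\eta(0,1)$ into the actual family of optimal plans $\gamma(t)$, rather than merely gluings of endpoint plans. The only conceptual remark worth recording is that the argument relies on linearity in an essential way: for a general continuous interpolation $X(t), Y(t)$, one would have to replace $L_t$ by a time-dependent map that realises the componentwise interpolation of support points, and the same conclusion would follow verbatim.
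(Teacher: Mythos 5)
Your proposal is correct and takes essentially the same approach as the paper: both produce the explicit gluing measure $\eta(0,1)=\sum_{i,j}\hat\gamma_{ij}\,\delta_{x_i(1),x_i(0),y_j(0),y_j(1)}$ with $\hat\gamma$ the common coupling from Theorem \ref{thrm: structure of discrete plans new notation}, and both recover $\gamma(t)$ by pushing forward along the affine interpolation map. The only cosmetic difference is that the paper reaches this measure as the weak limit $\lim_{s\to 0^+}\eta(s,1)$ (motivated by Remark \ref{rmk: quadratic interpolants particles dont cross}'s concern about coinciding coordinates at $t=0$), whereas you write it down directly and verify the pushforward property — which is equivalent, since the coupling matrix $\hat\gamma$ is already fixed for all $t$ by the theorem.
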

\begin{proof}
    Recalling \eqref{eq: G(s, t) definition glueing measure} which defines $\eta(s, 1)$ for $s \in (0, 1)$, we set
    \begin{equation*}
        \eta(0, 1) : = \lim_{s \to 0^+} \eta(s, 1) \in \Pi(\rho_{X_1}, \rho_{X_0}, \mu_{Y_0}, \mu_{Y_1})
    \end{equation*}
    where the limit is in the sense of weak convergence since $X(s) \to X_0$ and $Y(s) \to Y_0$ are continuous. Explicitly, \eqref{eq: G(s, t) definition glueing measure} tells us that 
    \begin{equation*}
        \eta(0, 1) = \sum_{i=1}^M \sum_{j = 1}^N \hat\gamma_{ij} \delta_{x_i^1, x_i^0, y_j^0, y_j^1} \in \Pi(\rho_{X(t)}, \rho_{X(s)}, \mu_{Y(s)}, \mu_{Y(t)})
    \end{equation*}
    where $\hat \gamma \in \Pi(\alpha, \beta)$ is the vertex corresponding to optimality for $s \in (0, 1)$ in Theorem \ref{thrm: structure of discrete plans new notation}. Then since $x_i(t) = (1-t)x_i^0 + t x_i^1$ and similarly for $Y(t)$, the optimal plan $\gamma(t) \in \Gamma(X(t), Y(t))$ is
    \begin{equation*}
        \gamma(t) = ( tx^1 + (1-t)x^0, (1-t)y^0 + t y^1)_\# \eta(0, 1) \; \text{ for all } t \in [0, 1].
    \end{equation*}
\end{proof}

\begin{remark}
    Another way of formulating the above is as follows. For $X(t), Y(t) : [0, 1] \to (\R^d)^M \times (\R^d)^N$ continuous, the measures $\rho_{X(t)}, \mu_{Y(t)}$ are absolutely continuous curves in $(\mathcal{P}_p(\R^d),W_p)$ for $p>1$. By \cite[Chapter 5]{santambrogio2015optimal} there exist vector fields $v^X_t, v_t^Y$ convecting these curves, in that
    \begin{equation*}
        \partial_t \rho_{X(t)} + \nabla \cdot (v_t^X\rho_{X(t)}) = 0, \quad \partial_t \mu_{Y(t)} + \nabla \cdot (v_t^Y\mu_{Y(t)}) = 0.
    \end{equation*}
    (Here one can explicitly just take vector fields agreeing a.e. with the coordinates of $X'(t)$ and $Y'(t)$.)
    Under the Hypotheses of Theorem \ref{thrm: structure of discrete plans new notation}, for $\gamma_t$ the unique optimal plan, we have
    \begin{equation*}
        \partial_t \gamma_t + \nabla \cdot ((v_t^X, v_t^Y) \gamma_t) = 0.
    \end{equation*}
\end{remark}

\begin{remark}
\label{rmk: quadratic interpolants particles dont cross}
    The reason why we must define $\eta(0, 1)$ delicately is that in \ref{enum: distinct points (0, 1)} we do not assume the coordinates are distinct at $s = 0$. Thus, there may be multiple ways to glue, only one of which will give the optimal plan by our uniqueness assumption. Taking the limit down from $s>0$ ensures we select the right glueing by qualitative stability.  The reason we prove the result with these slightly more general hypotheses (particles can intersect at $t= 0, 1$) is explained below.

    Take couplings in $ \pi^\rho \in \Pi(\rho_{X_0}, \rho_{X_1})$ and $\pi^\mu \in \Pi(\rho_{Y_0}, \rho_{Y_1})$. Let $M = \# \supp \pi^\rho$ and $N = \#\supp \pi^\mu$, then let each coordinate of $X_0, X_1$ correspond to a point in in $\supp \pi^\rho$ with 
    \begin{equation*}
        \alpha_i = \pi^\rho((x^0_i, x^1_i)) \quad \text{ and } \quad \beta_j = \pi^\mu((y_j^0, y_j^1))
    \end{equation*}
    for each distinct pair of points $x^0, x^1 \in \supp \pi^\rho$ and $y^0, y^1 \in \supp \pi^\mu$.
    If $\pi^\rho$ and $\pi^\mu$ are chosen to be optimal for $p$ cost with $p>1$, then the interpolations $X(t) = (1-t) X_0 + t X_1$ and $Y(t) = (1-t) Y_0 + t Y_1$ correspond to $W_p$ geodesic interpolations, and in fact all $W_p$ geodesics are of this form, see \cite[Chapter 5]{santambrogio2015optimal}.
    \begin{equation*}
    \rho_t = ((1-t)x_0 + tx_1)_\#\pi^\rho \text{ and } \mu_t = ((1-t)y_0 + ty_1)_\#\pi^\mu.
    \end{equation*}
    Thus, we retain the slight generality of allowing particles to intersect at $t \in \{0, 1\}$ in the statement of Theorem \ref{thrm: structure of discrete plans new notation}: in general transport plans will split mass on departure and arrival, and we would like these interpolations to be covered by the theorem.
\end{remark}

\begin{lemma}
\label{lem: p cost geodesic gives non crossing}
    Let $c(x,y) = \|x-y\|^p$ with $p>1$.
    Assume that $X(t) = (1-t) X_0 + t X_1$ corresponds to a $W_p$ geodesic in the sense of Remark \ref{rmk: quadratic interpolants particles dont cross}. Then $X(t)$ satisfies the non-crossing particle assumption \ref{enum: distinct points (0, 1)}.
\end{lemma}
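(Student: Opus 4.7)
The plan is to argue by contradiction, combining the classical $p$-cyclical monotonicity of optimal couplings with the strict convexity of $\|\cdot\|^p$ on $\R^d$ when $p>1$. Let me fix notation matching Remark \ref{rmk: quadratic interpolants particles dont cross}: $\pi^\rho \in \Pi(\rho_{X_0}, \rho_{X_1})$ is the $p$-optimal coupling whose distinct atoms $(x_i^0, x_i^1)$ for $i = 1, \ldots, M$ define $X_0, X_1$. Assume toward contradiction that for some $i \neq j$ and some $t^* \in (0, 1)$,
\begin{equation*}
    (1 - t^*) x_i^0 + t^* x_i^1 = (1 - t^*) x_j^0 + t^* x_j^1.
\end{equation*}

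The first step is a small algebraic manipulation. Set $\xi = x_i^0 - x_i^1$ and $\eta = x_j^0 - x_j^1$. The crossing identity implies $(1 - t^*)(x_i^0 - x_j^0) = t^*(x_j^1 - x_i^1)$, from which one computes directly
\begin{equation*}
    x_i^0 - x_j^1 = t^* \xi + (1 - t^*) \eta, \qquad x_j^0 - x_i^1 = (1 - t^*) \xi + t^* \eta.
\end{equation*}
So both swap-displacements are convex combinations of $\xi$ and $\eta$ with weights summing to one on each side, and their total weight across the two pairs equals $1$ on $\xi$ and $1$ on $\eta$.

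The second step invokes strict convexity of $z \mapsto \|z\|^p$ on $\R^d$ for $p > 1$ (which follows from strict convexity of $s \mapsto s^p$ on $\R_+$ combined with strict convexity of the Euclidean norm). Applying it to each of the two convex combinations and summing yields
\begin{equation*}
    \|x_i^0 - x_j^1\|^p + \|x_j^0 - x_i^1\|^p \leq \|\xi\|^p + \|\eta\|^p = \|x_i^0 - x_i^1\|^p + \|x_j^0 - x_j^1\|^p,
\end{equation*}
with equality only if $\xi = \eta$. In the equality case, $\xi = \eta$ combined with the crossing relation $x_i^0 - x_j^0 = -\tfrac{t^*}{1 - t^*}(x_i^1 - x_j^1)$ forces $x_i^0 = x_j^0$ and $x_i^1 = x_j^1$, contradicting the assumed distinctness of the atoms of $\pi^\rho$. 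Hence the inequality is \emph{strict}.

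Finally, since $\pi^\rho$ is optimal for the $p$-cost, its support is $p$-cyclically monotone (\cite[Theorem 5.10]{villani2008optimal} or the classical characterisation), so applying \eqref{intro: c cm definition statment} to the $2$-cycle built from $(x_i^0, x_i^1)$ and $(x_j^0, x_j^1)$ gives the opposite inequality
\begin{equation*}
    \|x_i^0 - x_i^1\|^p + \|x_j^0 - x_j^1\|^p \leq \|x_i^0 - x_j^1\|^p + \|x_j^0 - x_i^1\|^p,
\end{equation*}
which contradicts the strict inequality above. Therefore no two interpolating particles can coincide at any $t^* \in (0, 1)$, proving hypothesis \ref{enum: distinct points (0, 1)}. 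The only genuinely delicate point is the algebraic identification of the swap-displacements as convex combinations of $\xi$ and $\eta$; once this is in hand, strict convexity closes the argument immediately.
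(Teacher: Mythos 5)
Your proof is correct and follows essentially the same strategy as the paper's: express both swap-displacements $x_i^0 - x_j^1$ and $x_j^0 - x_i^1$ as convex combinations (with weights $t^*$ and $1-t^*$) of the two particle displacements, apply strict convexity of $\|\cdot\|^p$ to contradict the $p$-cyclical monotonicity of $\supp\pi^\rho$, and handle the equality case by showing it forces $(x_i^0,x_i^1) = (x_j^0,x_j^1)$. The only cosmetic difference is your sign convention for the displacement vectors; one small imprecision is attributing strict convexity of $\|\cdot\|^p$ partly to ``strict convexity of the Euclidean norm,'' which is not strictly convex --- the standard argument is that $\|\cdot\|$ is convex and $s\mapsto s^p$ is strictly increasing and strictly convex, and the triangle inequality is strict unless the vectors are parallel --- but the conclusion is standard and this does not affect the proof.
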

\begin{proof}
    This is a straightforward consequence of $ c$-monotonicity and strict convexity, following an argument very similar in flavour to \cite[Theorem 2.9]{santambrogio2015optimal}. Assume that $(x_0, x_1), (z_0, z_1) \in \supp \pi^\rho$, where $\pi^\rho \in \Pi(\rho_{X_0}, \rho_{X_1})$ is the $p$ cost optimal plan along which $X(t)$ is supported, in the sense of Remark \ref{rmk: quadratic interpolants particles dont cross}. Let 
    \begin{equation*}
        x_t = (1-t) x_0 + t x_1 = x_0 + t v_x \quad \text{ and } \quad y_t = (1-t) z_0 + t z_1 = z_0 + t v_z,
    \end{equation*}
    where $v_x = x_1 - x_0$ and $v_z = z_1 - z_0$. By $p$-cost cyclical monotonicity,
    \begin{equation}
    \label{eq: p monotony two points}
        \|x_0 - x_1\|^p + \|z_0 - z_1\|^p \leq \|z_1 - x_0\|^p + \|z_0 - x_1\|^p.
    \end{equation}
    Assume there exists $t \in (0, 1)$ such that
    \begin{equation}
    \label{eq: intersection at some t}
        x(t) = (1-t)x_0 + t x_1 = (1-t) z_0 + t z_1 = z(t).
    \end{equation}
    Then for this $t$,
    \begin{equation*}
        z_1 - x_0 = z_1 - z(t) + x(t) - x_0 = (1-t)v_z + t v_x.
    \end{equation*}
    and similarly $z_0 - x_1 = t v_z + (1-t) v_x$. If $v_z = v_x$, then \eqref{eq: intersection at some t} forces that $x_0 = z_0$ and $x_1 = z_1$. Thus, assuming $v_z \neq v_x$, By strict convexity of $\| \cdot ||^p$,
    \begin{align}
        \|(1-t)v_z + t v_x\|^p + \|t v_z + (1-t) v_x\|^p <& (1-t)\|v_z\|^p + t \|v_x\|^p + t\|v_z\|^p + (1-t) \|v_x\|^p\\
        =& \|x_0 - x_1\|^p + \|z_0 - z_1\|^p.
    \end{align}
    Combining the strict inequality above with \eqref{eq: p monotony two points} gives a contradiction, and so there can exist no such $t \in (0, 1)$ intersection point.
\end{proof}

The qualitative structural result Theorem \ref{thrm: structure of discrete plans new notation} allows for the following quantitative result.

\begin{theorem}[Stability of plans under uniqueness]
\label{thrm: stability of plans under uniqueness}
For the $p$ cost with $1<p<\infty$, let $\rho_0, \rho_1, \mu_0$ and $\mu_1$ be discrete measures in $\R^d$. Let $\gamma_0 \in \Pi(\rho_0, \mu_0)$ and $\gamma_1 \in \Pi(\rho_1, \mu_1)$ be optimal plans. Set $(\rho_t)_{t \in [0, 1]}$ and $(\mu_t)_{t \in [0, 1]}$ as $W_p$ geodesics connecting $\rho_0$ to $\rho_1$ and $\mu_0$ to $\mu_1$. Assume for each $t \in [0, 1]$ that the $W_p$ transport problem over $\Pi(\rho_t, \mu_t)$ has a unique solution. Then
\begin{equation*}
    W_p^p(\gamma_0, \gamma_1) \leq C_p (W_p^p(\rho_0, \rho_1) + W_p^p(\mu_0, \mu_1)).
\end{equation*}
where $C_p = \max(1, 2^{\frac{p}{2}-1})$.
\end{theorem}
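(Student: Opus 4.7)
The plan is to apply the structural theorem \ref{thrm: structure of discrete plans new notation} to produce a single combinatorial coupling $\hat\gamma \in \Pi(\alpha, \beta)$ describing the optimal plan for every $t \in [0,1]$, and then to read off an admissible four-marginal coupling between $\gamma_0$ and $\gamma_1$ from its glueing measure, bounding its cost with an elementary $p$-convexity inequality.

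First I would place ourselves in the parametrisation from Remark \ref{rmk: quadratic interpolants particles dont cross}. Let $\pi^\rho \in \Pi(\rho_0, \rho_1)$ and $\pi^\mu \in \Pi(\mu_0, \mu_1)$ be $p$-cost optimal couplings realising $W_p(\rho_0, \rho_1)$ and $W_p(\mu_0, \mu_1)$. Enumerate their supports as $(x_i^0, x_i^1)_{i=1}^M$ and $(y_j^0, y_j^1)_{j=1}^N$, and take weights $\alpha_i = \pi^\rho(\{(x_i^0, x_i^1)\})$, $\beta_j = \pi^\mu(\{(y_j^0, y_j^1)\})$. Setting $X(t) = (1-t)X_0 + tX_1$ and $Y(t) = (1-t)Y_0 + tY_1$, the $W_p$-geodesics of the statement coincide with $\rho_{X(t)}$ and $\mu_{Y(t)}$. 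Lemma \ref{lem: p cost geodesic gives non crossing} supplies the non-crossing condition \ref{enum: distinct points (0, 1)} on $(0,1)$, and the standing uniqueness assumption provides \ref{enum: unique solutions for each pb}, so both hypotheses of Theorem \ref{thrm: structure of discrete plans new notation} are satisfied. The theorem then produces a single $\hat\gamma \in \Pi(\alpha, \beta)$ such that $\gamma_t = \sum_{i,j} \hat\gamma_{ij}\, \delta_{(x_i(t), y_j(t))}$ for every $t \in [0,1]$; in particular, at the endpoints we recover $\gamma_0$ and $\gamma_1$ by qualitative stability combined with the uniqueness assumption at $t \in \{0,1\}$.

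Next I would use the associated glueing measure to build a coupling. Define
\begin{equation*}
    \eta := \sum_{i=1}^M \sum_{j=1}^N \hat\gamma_{ij}\, \delta_{(x_i^0,\, y_j^0,\, x_i^1,\, y_j^1)} \in \mathcal{P}(\R^{4d}),
\end{equation*}
which has $(1,2)$-marginal $\gamma_0$ and $(3,4)$-marginal $\gamma_1$. Then $\tilde\eta := (p_{12}, p_{34})_\#\eta \in \Pi(\gamma_0, \gamma_1)$ is admissible for the transport problem computing $W_p^p(\gamma_0, \gamma_1)$. Applying the elementary inequality $(a+b)^{p/2} \leq C_p(a^{p/2} + b^{p/2})$ with $C_p = \max(1, 2^{p/2-1})$ (by concavity of $s \mapsto s^{p/2}$ when $p \in (1,2]$, and by the power-mean inequality when $p \geq 2$) yields
\begin{align*}
    W_p^p(\gamma_0, \gamma_1)
    &\leq \sum_{i,j} \hat\gamma_{ij}\bigl(\|x_i^0 - x_i^1\|^2 + \|y_j^0 - y_j^1\|^2\bigr)^{p/2}\\
    &\leq C_p \sum_{i,j} \hat\gamma_{ij}\bigl(\|x_i^0 - x_i^1\|^p + \|y_j^0 - y_j^1\|^p\bigr)\\
    &= C_p \Bigl(\sum_i \alpha_i \|x_i^0 - x_i^1\|^p + \sum_j \beta_j \|y_j^0 - y_j^1\|^p\Bigr)\\
    &= C_p \bigl(W_p^p(\rho_0, \rho_1) + W_p^p(\mu_0, \mu_1)\bigr),
\end{align*}
where the collapsing of the double sum uses the marginal identities $\sum_j \hat\gamma_{ij} = \alpha_i$ and $\sum_i \hat\gamma_{ij} = \beta_j$, and the last line uses the $W_p$-optimality of $\pi^\rho$ and $\pi^\mu$.

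The real work is already contained in Theorem \ref{thrm: structure of discrete plans new notation}: once the same combinatorial vertex $\hat\gamma$ is shown to govern the optimal plan for every $t$, the remainder is essentially bookkeeping. The only delicate point to watch is that the non-crossing condition \ref{enum: distinct points (0, 1)} need only hold on the open interval, which is exactly the regime covered by Lemma \ref{lem: p cost geodesic gives non crossing}; particles are generically allowed to coincide at $t \in \{0,1\}$ when $\pi^\rho$ or $\pi^\mu$ splits mass, and the endpoint plans are then recovered as weak limits.
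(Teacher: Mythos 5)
Your proposal is correct and follows essentially the same route as the paper: invoke Lemma \ref{lem: p cost geodesic gives non crossing} to verify the non-crossing hypothesis, apply Theorem \ref{thrm: structure of discrete plans new notation} to obtain the single combinatorial vertex $\hat\gamma$ governing the whole geodesic, and then bound the cost of the explicit four-marginal glueing using the power-mean inequality $(a+b)^{p/2} \le C_p(a^{p/2}+b^{p/2})$. The only cosmetic difference is that you write out the glueing measure $\eta$ in coordinates (and order its marginals slightly differently), whereas the paper refers to it abstractly via the projections; the underlying argument is identical.
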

\begin{proof}
Let $\pi^\rho$, $\pi^\mu$ be the plans supporting the geodesics $\rho_t$ and $\mu_t$, in the sense of Remark \ref{rmk: quadratic interpolants particles dont cross}. Since $\pi^\rho, \pi^\mu$ are optimal, the non-intersecting particle assumption \ref{enum: distinct points (0, 1)} is satisfied due to Lemma \ref{lem: p cost geodesic gives non crossing}, so apply Theorem \ref{thrm: structure of discrete plans new notation}. Hence $\gamma_1$ has the structure
\begin{equation*}
    \gamma_1 = \pi^\rho \circ \gamma_0 \circ \pi^\mu.
\end{equation*}
Let $\eta \in \Pi(\rho_1, \rho_0, \mu_0, \mu_1)$ be the corresponding glueing measure, so that $p_{12\#}\eta = \pi^\rho$, $p_{23\#}\eta = \gamma_0$, $p_{34\#}\eta = \pi^\mu$ and $p_{14\#}= \gamma_1$. Then $\eta$ defines a coupling between $\gamma_0$ and $\gamma_1$ so that
\begin{align*}
    W_p^p(\gamma_0, \gamma_1) \leq& \int_{\R^{4d}} \|(x_0, y_0)-(x_1, y_1)\|^p \di \eta(x_1, x_0, y_0, y_1)\\
    \leq& C_p \int_{\R^{4d}} \|x_0 -x_1\|^p + \|y_0 - y_1\|^p \di \eta(x_1, x_0, y_0, y_1)\\
    =& C_p\left(\int_{\R^{2d}} \|x_1 - x_0\|^p \di \pi^\rho(x_1, x_0) + \int_{\R^{2d}} \|y_0 - y_1\|^p \di \pi^\mu(y_0, y_1)\right)\\
    =& C_p(W^p_p(\rho_1, \rho_0) + W^p_p(\mu_0, \mu_1)).
\end{align*}
\end{proof}
\begin{remark}
The distances between plans and between marginals are actually comparable under these assumptions, since we always have the reverse inequality as a consequence of Proposition \ref{prop:reverse stability}. In the case $p=2$, $c_p = C_p = 1$ so that $W_2^2(\gamma_0, \gamma_1) = W_2^2(\rho_0, \rho_1) + W_2^2(\mu_0, \mu_1)$.
\end{remark}

\chapter{Uniqueness of continuous Kantorovich potentials}
\label{ch: uniqueness of dual problem}

In this chapter, we study the uniqueness properties of the continuous dual problem (\ref{intro: c dual problem}). In general, optimisers to (\ref{intro: c dual problem})  or (\ref{eq: C(X, Y) dual linear programming problem}) are at best unique up to a constant, since given any $(\phi, \psi)$ admissible, $(\phi + \lambda, \psi - \lambda)$ is also admissible for any $\lambda \in \R$ and has the same cost. We provide what is, to our knowledge, the first dual uniqueness criterion applicable when both measures are concentrated on lower-dimensional subsets of $\R^d$. We then use this characterisation to give a combinatorial, graph-based characterisation of uniqueness akin to Lemma \ref{lemma: balinski dual extreme vertices characterisation} and Proposition \ref{prop: acciaio discrete dual uniqueness}.

\section{Literature review}

We begin by reviewing the literature on the uniqueness of the dual problem. As discussed in Chapter \ref{ch: discrete transport problem}, the uniqueness of the discrete primal and dual problems is understood by characterising the extreme points of the respective polyhedra. Lemma \ref{lemma: characterisation of extreme points of primal polyhedra} and Lemma \ref{lemma: balinski dual extreme vertices characterisation} describe these in terms of a bipartite graph between source and target support points due to \cite{balinski1993signature, acciaio2025characterization}. Uniqueness of the primal problem occurs if and only if the corresponding graph $G_\Gamma$ has no cycles, and uniqueness of the dual occurs if and only if $G_\Gamma$ is connected. We also mention \cite{nutz2025quadratically}, which arrives at an essentially equivalent graph theoretic criterion for dual potentials in the quadratically regularised discrete problem, as well as giving a quantitative description akin to \cite[Theorem 3.9]{acciaio2025characterization}. 

We now turn to the uniqueness of the continuous dual problem (\ref{intro: c dual problem}). The following is the classically dual uniqueness criterion.
\begin{proposition}\textnormal{\cite[Proposition 7.18]{santambrogio2015optimal}}
\label{prop: int closure uniqueness santamb}
    Let $\rho$ and $\mu$ be probability measures supported on compact subsets $\X$ and $\Y$ of $\R^d$, let $c \in C^1(\X \times \Y)$. Assume that $\supp \rho$ is the closure of a connected open set. Then optimisers to the dual problem (\ref{intro: c dual problem}) are unique up to a constant.
\end{proposition}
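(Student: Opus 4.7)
The plan is to show that if $(\phi_0, \phi_0^c)$ and $(\phi_1, \phi_1^c)$ are two optimal dual pairs (chosen $c$-concave, as in the discussion after \eqref{intro:equality of c trans optims}), then $f := \phi_0 - \phi_1$ is constant on $\supp\rho$. First I would verify that for every $x \in \supp\rho$, $\partial^c\phi_0(x) \cap \partial^c\phi_1(x) \neq \emptyset$. Taking any optimal primal plan $\gamma$, the compatibility condition \eqref{intro: compatibility condition} gives $\supp\gamma \subset \partial^c\phi_i$ for $i=0,1$. Since $p_{X\#}\gamma = \rho$ and $\supp\gamma$ is compact, a standard argument (for $x \in \supp\rho$ and any neighbourhood $V$ of $x$, $\gamma(V \times \Y) = \rho(V) > 0$, then extract a limit in $\supp\gamma$) shows $\supp\rho \subset p_X(\supp\gamma)$. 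So for each $x \in \supp\rho$ there exists $y \in \Y$ with $(x,y) \in \supp\gamma$, and this $y$ belongs to both $c$-superdifferentials.

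Next, since $c \in C^1(\X \times \Y)$ on compact domains, the family $\{c(\cdot,y)\}_{y \in \Y}$ is uniformly Lipschitz on $\X$; the $c$-concave representation $\phi_i(x) = \inf_{y \in \Y} c(x,y) - \phi_i^c(y)$ then exhibits $\phi_i$ as an infimum of uniformly Lipschitz functions, so each $\phi_i$ is Lipschitz on $\X$. Writing $\supp\rho = \overline{U}$ with $U$ connected open in $\R^d$, we have $U \subset \Int(\X)$, and by Rademacher's theorem $\phi_0$ and $\phi_1$ are differentiable Lebesgue-a.e.\ on $U$.

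The central step is an envelope identity: if $\phi_i$ is differentiable at $x \in U$ and $y \in \partial^c\phi_i(x)$, then by \eqref{eq: c superdiff} the map $z \mapsto c(z,y) - \phi_i(z)$ attains its minimum over $\X$ at the interior point $z = x$; testing $z = x + tv$ with $t \to 0^{\pm}$ in every direction $v \in \R^d$ (permissible since $x$ is interior) forces $\nabla\phi_i(x) = \nabla_x c(x,y)$. Applying this at a common $y \in \partial^c\phi_0(x) \cap \partial^c\phi_1(x)$ yields $\nabla f(x) = 0$ for Lebesgue-a.e.\ $x \in U$. Since $f$ is Lipschitz and $U$ is connected, $f$ is constant on $U$, and then constant on $\supp\rho = \overline{U}$ by continuity. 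Finally, to propagate the constant to the conjugates, for each $y \in \supp\mu$ a symmetric projection argument produces $x \in \supp\rho$ with $y \in \partial^c\phi_0(x) \cap \partial^c\phi_1(x)$; the defining equalities from \eqref{intro: c transf definition} and \eqref{intro: compatibility condition} then give $(\phi_0 - \phi_1)(x) + (\phi_0^c - \phi_1^c)(y) = 0$, so $\phi_0^c - \phi_1^c \equiv -\lambda$ on $\supp\mu$, completing the proof.

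The main obstacle, and the reason the hypothesis on $\supp\rho$ is exactly what is needed, is the envelope step. To promote the one-sided inequality in \eqref{eq: c superdiff} to an equality of gradients, $x$ must be interior to $\X$ in every direction of $\R^d$; to then upgrade ``$\nabla f = 0$ a.e.'' to ``$f$ constant'', the set of such points must be connected. The assumption $\supp\rho = \overline{U}$ with $U$ connected open packs both requirements into a single hypothesis: without interior points no first-order condition is available, and without connectedness one could only conclude that $f$ is locally constant on each component, with distinct constants a priori possible on each.
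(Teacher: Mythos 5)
The paper states this proposition as a citation to Santambrogio (\cite[Proposition 7.18]{santambrogio2015optimal}) and does not reproduce a proof, so there is no in-paper argument to compare against word for word. Your proof is correct and is essentially the classical argument found in the reference: Lipschitzness of $c$-concave potentials from $c\in C^1$ on compact sets, Rademacher a.e.\ differentiability on the open set $U$ with $\supp\rho=\overline{U}$, the envelope identity $\nabla\phi_i(x)=\nabla_x c(x,y)$ at a common superdifferential point $y$, then constancy of $\phi_0-\phi_1$ on the connected open set $U$ and propagation to the conjugates through the compatibility condition. The projection step $\supp\rho\subset p_X(\supp\gamma)$ is exactly the paper's Lemma \ref{app: support projection lemma}, and the envelope/propagation steps are correctly handled.

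It is worth contrasting your route with the paper's own generalisation, Theorem \ref{thrm: uniqueness Lipschitz connected bounded}, which strictly contains this proposition. Where you rely on Rademacher a.e.\ differentiability \emph{in $\R^d$} and the first-order condition $\nabla g(x)=0$ at an \emph{interior} minimiser (so that you can test $z=x+tv$ in every direction $v$, and then integrate $\nabla f=0$ a.e.\ over the connected open set), the paper instead fixes a Lipschitz curve $\omega:[0,1]\to\supp\rho$, pulls the potentials back to $\tilde\phi_i=\phi_i\circ\omega$ on $[0,1]$, applies Rademacher on $[0,1]$, and tests the $c$-superdifferential inequality only at $z=\omega(t\pm h)\in\supp\rho$, yielding $\tilde\phi_i'(t)=\langle\nabla_x c(\omega(t),y)\,|\,\omega'(t)\rangle$. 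This one-dimensional argument never needs $x$ to be an interior point of $\X$, only that there are enough rectifiable curves inside $\supp\rho$; that is precisely what lets the paper replace ``closure of a connected open set'' by ``Lipschitz-path connected'', covering lower-dimensional supports such as submanifolds where $\Int(\supp\rho)=\emptyset$. Your closing remark correctly isolates the two roles the hypothesis plays in your proof (interiority for the first-order condition, connectedness for integrating $\nabla f=0$); the curve-based argument decouples them, keeping only the connectedness requirement in a rectifiable form.
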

Two recent papers \cite{staudt2025uniqueness, yang2023optimal} have approached the uniqueness of the continuous dual problem. Neither explicitly states a bipartite graph theoretic formulation, but both give results which are of this flavour. We also mention \cite{nutz2025quadratically}, which establishes an analogue of Proposition \ref{prop: int closure uniqueness santamb} in the quadratically regularised case. In \cite[Theorem 4.5]{yang2023optimal}, the authors characterise uniqueness under the assumption that $\supp \rho$ is the closure of a connected open set. Here they use what amounts to a bipartite graph-like connectedness condition à la Proposition \ref{prop: acciaio discrete dual uniqueness}, which is asymmetric in $\rho$ and $\mu$. One set of nodes is chosen to be the interiors of the connected components of $\rho$, while the other is simply the connected components of $\mu$. In \cite{staudt2025uniqueness}, the authors also consider the decomposition of $\supp \rho$ and $\supp \mu$ into connected components, presenting results under the assumption that the restriction of Kantorovich potentials to each component is unique, as well as presenting some results under the \textit{``closure of a connected open set"} support hypothesis.

\section{Uniqueness}

We begin by extending Proposition \ref{prop: int closure uniqueness santamb}. The assumption \textit{``closure of a connected open set"} is inherently ``full dimensional" in that each point in the interior should contain an open neighbourhood, so $\supp \rho$ cannot have any lower dimensional structure. Our result provides what is to our knowledge, the first dual uniqueness result where \textit{both} measures are supported on sets of dimension strictly smaller than the ambient space. We will require the following assumptions on the cost
\begin{assumption}
\hfill
\label{assumption: cost hypotheses}
\begin{enumerate}[label=(\roman*)]
    \item $c$ is continuous and bounded below.
    \item For fixed $y \in \Y$, $x \mapsto c(x, y)$ is differentiable for all $x \in \X$.
    \item The $y$ parametrised family of functions $x \mapsto c(x, y)$ are equilocally Lipschitz - given a compact $K \subset \Y$ we can choose a Lipschitz constant $L_K$ which works for all $y \in K$.
\end{enumerate} 
\end{assumption}
In particular, $c(x, y) = \|x - y\|^p$ satisfies Assumption \ref{assumption: cost hypotheses} for all $p >1$ (for $p \in (0, 1]$, differentiability of $x \mapsto \| x- y\|^p$ fails at $x = y$). Continuity of $c$ forces that the maps $x \mapsto c(x, y)$ are locally bounded in $x$ and $y$. These hypotheses induced the following regularity on $c$-concave (and hence optimal) potentials. We leave the proof to the Appendix, Lemma \ref{app: potential regularity}.
\begin{lemma}
\label{lem: potential regularity}
    Assume $c$ satisfies Assumption \ref{assumption: cost hypotheses}, and $\Y$ is compact. Then given any $\psi \in C_b(\Y)$, $\psi^c$ defined as in \eqref{intro: c transf definition} is Locally Lipschitz on $\X$, and does not take the value $-\infty$ anywhere.
\end{lemma}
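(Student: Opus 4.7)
The plan is to establish the two claims separately and directly from the definition of the $c$-transform, exploiting the boundedness of $\psi$ and the cost together with the equilocal Lipschitz hypothesis.

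For the statement that $\psi^c$ never takes the value $-\infty$, I would use only Assumption \ref{assumption: cost hypotheses}(i) and the fact that $\psi \in C_b(\Y)$. Since $c$ is bounded below by some constant $m \in \R$ and $\|\psi\|_\infty \leq M < \infty$, one has
\begin{equation*}
\psi^c(x) = \inf_{y \in \Y}\bigl(c(x,y) - \psi(y)\bigr) \geq m - M > -\infty
\end{equation*}
for every $x \in \X$. This is essentially immediate.

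For local Lipschitz regularity, the plan is to apply Assumption \ref{assumption: cost hypotheses}(iii) to the compact set $K = \Y$ and then use the classical fact that an infimum of a family of functions sharing a common Lipschitz constant is itself Lipschitz with the same constant. Concretely, fix $x_0 \in \X$; by the equilocal Lipschitz hypothesis applied to $K = \Y$, there exist a neighbourhood $U$ of $x_0$ in $\X$ and a constant $L = L_{\Y, U}$ such that $x \mapsto c(x,y)$ is $L$-Lipschitz on $U$ for every $y \in \Y$. Since $\psi(y)$ does not depend on $x$, the functions $x \mapsto c(x,y) - \psi(y)$ are $L$-Lipschitz on $U$ uniformly in $y$. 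For $x_1, x_2 \in U$ and any $y \in \Y$,
\begin{equation*}
\psi^c(x_1) \leq c(x_1, y) - \psi(y) \leq c(x_2, y) - \psi(y) + L \|x_1 - x_2\|,
\end{equation*}
and taking the infimum over $y$ on the right gives $\psi^c(x_1) \leq \psi^c(x_2) + L \|x_1 - x_2\|$. Interchanging the roles of $x_1, x_2$ yields local Lipschitz continuity of $\psi^c$ on $U$.

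I do not anticipate a real obstacle: the only subtle point is the interpretation of the word \emph{equilocally}, i.e.\ whether the constant $L_K$ in Assumption \ref{assumption: cost hypotheses}(iii) is global in $x \in \X$ or only local. Either way the argument above works verbatim, since we are only seeking a local Lipschitz bound around each $x_0 \in \X$. The compactness of $\Y$ assumed in the statement is exactly what lets us take $K = \Y$ as the compact set feeding into the hypothesis.
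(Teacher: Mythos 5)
Your proposal is correct and follows essentially the same route as the paper: combine the equilocal Lipschitz hypothesis (with $K = \Y$) with the standard envelope argument that an infimum of uniformly $L$-Lipschitz functions is $L$-Lipschitz. The one minor difference is that you deduce $\psi^c > -\infty$ directly and uniformly from the global lower bound $c \geq m$ and $\|\psi\|_\infty < \infty$ in Assumption \ref{assumption: cost hypotheses}(i), whereas the paper derives a two-sided local bound on $\psi^c$ via uniform boundedness of $\{c(\cdot,y)\}_{y\in\Y}$ on compact subsets of $\X$; both arguments are sound.
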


\begin{definition}
    We say a set $\Omega$ is Lipschitz-path connected if for any $x_0, x_1 \in \Omega$, there exists a finite length curve $\omega : [0, 1] \to \Omega$ such that $\omega(0)=x_0$ and $\omega(1)=x_1$. Without loss of generality, we take $\omega$ to be parametrised with constant speed, hence Lipschitz.
\end{definition}

\begin{figure}[ht]
    \centering
    \begin{tikzpicture}[scale=1.2]

  \draw[fill=gray!20, draw=black, line width=0.9pt] (0,0) circle (1);
  \draw[fill=gray!20, draw=black, line width=0.9pt] (4,0) circle (1);

  \draw[draw=black, line width=1.2pt] (1,0) -- (3, 0);

  \fill (0,0.3) circle (2pt) node[above] {$x_0$};
  \fill (4,-0.3) circle (2pt) node[below] {$x_1$};

  \draw[red, dotted, thick, line width=1.5pt] (0,0.3) -- (1, 0);
  \draw[red, dotted, thick, line width=1.5pt] (1,0) -- (3, 0);
  \draw[red, dotted, thick, line width=1.5pt] (4,-0.3) -- (3, 0);

\end{tikzpicture}
    \hfill
    \begin{tikzpicture}[scale=1.2]

  \draw[fill=gray!20, draw=black, line width=0.9pt] (0,0) circle (1);
  \draw[fill=gray!20, draw=black, line width=0.9pt] (4,0) circle (1);

  \draw[draw=black, line width=1.2pt] (1,0) .. controls (2,1.5) .. (3,0);

  \fill (0,0.3) circle (2pt) node[above] {$x_0$};
  \fill (4,-0.3) circle (2pt) node[below] {$x_1$};

  \draw[red, dotted, thick, line width=1.5pt] (0,0.3) -- (1,0);
  \draw[red, dotted, thick, line width=1.5pt] (1,0) .. controls (2,1.5) .. (3,0);
  \draw[red, dotted, thick, line width=1.5pt] (3,0) -- (4,-0.3);

\end{tikzpicture}

    \caption{Two Lipschitz path-connected sets. Their interiors are not connected, so known uniqueness results do not apply to measures with this support. Uniqueness with bounded target, Theorem \ref{thrm: uniqueness Lipschitz connected bounded} applies to both domains, whilst for unbounded target, Theorem \ref{thrm: unbounded target uniqueness} only applies to the left picture.}
    \label{fig:Lipschitz Path connected domain}
\end{figure}
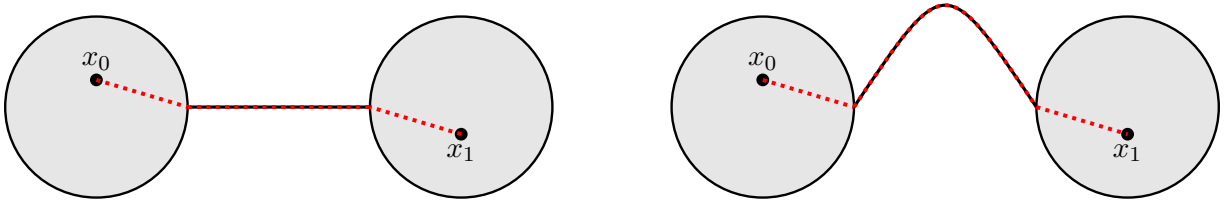

In particular, Lipschitz-path connected spaces include many lower-dimensional subspaces of $\R^d$ which are not covered by the closure of the interior style assumption, which is inherently $d$-dimensional. Measures with such support, include, for example, measures with densities with respect to some Hausdorff measure $\mathcal{H}^k$, restricted to some submanifold of $\R^d$.
\begin{theorem}
\label{thrm: uniqueness Lipschitz connected bounded}
Let $c: \R^d \times \R^d \to \R_+$ satisfy Assumption \ref{assumption: cost hypotheses}, let $\rho$ and $\mu$ be probability measures on $\R^d$.
Assume that
\begin{enumerate}[label=(\roman*)]
    \item $\supp \rho$ is Lipschitz-path connected,
    \item $\supp \mu$ is bounded.
\end{enumerate}
Then optimisers of the dual problem (\ref{intro: c dual problem}) are unique up to a constant.
\end{theorem}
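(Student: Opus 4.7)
The plan is to show that any two optimal pairs $(\phi_0,\psi_0),(\phi_1,\psi_1)$ of \eqref{intro: c dual problem} differ by a constant on the supports of the marginals. Without loss of generality, replace each $\phi_i$ by $\psi_i^c$ and $\psi_i$ by $\phi_i^c$ so that both are $c$-concave. By Lemma~\ref{lem: potential regularity}, applied with $K = \overline{\supp \mu}$ compact (invoking boundedness of $\supp\mu$), each $\phi_i$ is locally Lipschitz on $\R^d$, and so is $f := \phi_0 - \phi_1$. The main step is to show $f$ is constant on $\supp\rho$; the analogous statement for $\psi_0-\psi_1$ on $\supp\mu$ will then follow from the compatibility condition.

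Fix any optimal $\gamma$. By \eqref{intro: compatibility condition}, $\phi_i(x)+\psi_i(y)=c(x,y)$ on $\supp\gamma$ for $i=0,1$. A standard argument using $\rho=\pi_{1\#}\gamma$, closedness of $\supp\gamma$, and boundedness of $\supp\mu$ gives $\pi_1(\supp\gamma)=\supp\rho$, so for every $x\in\supp\rho$ there exists a \emph{contact partner} $y\in\overline{\supp\mu}$ with $(x,y)\in\supp\gamma$. Given $x_0,x_1\in\supp\rho$, Lipschitz-path-connectedness supplies a Lipschitz $\omega:[0,1]\to\supp\rho$ with $\omega(0)=x_0$ and $\omega(1)=x_1$; at each $t$ pick a contact partner $y_t$ for $\omega(t)$ (no measurability or continuity of $t\mapsto y_t$ will be required).

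The crux is the envelope identity
\begin{equation*}
    (\phi_i\circ\omega)'(t)=\langle\nabla_x c(\omega(t),y_t),\omega'(t)\rangle,\qquad i=0,1,
\end{equation*}
valid at almost every $t\in[0,1]$ (those where both $\omega'$ and $(\phi_i\circ\omega)'$ exist form a full-measure set since $\omega$ and $\phi_i\circ\omega$ are Lipschitz). Indeed, from $\phi_i=\psi_i^c$ and the contact at $(\omega(t),y_t)$,
\begin{equation*}
    \phi_i(\omega(t+h))-\phi_i(\omega(t))\leq c(\omega(t+h),y_t)-c(\omega(t),y_t)\quad \forall h.
\end{equation*}
Differentiability of $c(\cdot,y_t)$ at $\omega(t)$ together with the equilocal Lipschitzness of $c$ (to absorb the $o(h)$ in the expansion $\omega(t+h)=\omega(t)+h\omega'(t)+o(h)$) gives $c(\omega(t+h),y_t)-c(\omega(t),y_t)=h\langle\nabla_x c(\omega(t),y_t),\omega'(t)\rangle+o(h)$. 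Dividing by $h$ and sending $h\to 0^+$ and then $h\to 0^-$ (with the inequality reversed for negative $h$) pinches the derivative to the claimed value. Since the right-hand side is independent of $i$, $(f\circ\omega)'(t)=0$ almost everywhere. As $f\circ\omega$ is Lipschitz hence absolutely continuous, it is constant on $[0,1]$, so $f(x_0)=f(x_1)$. Since $x_0,x_1$ were arbitrary, $f\equiv -\lambda$ on $\supp\rho$ for some $\lambda\in\R$; substituting into \eqref{intro: compatibility condition} at any $(x,y)\in\supp\gamma$ yields $\psi_0-\psi_1\equiv\lambda$ on $\pi_2(\supp\gamma)=\supp\mu$.

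The main obstacle is the envelope step above: since $t\mapsto y_t$ is merely a pointwise selection with no measurability or continuity, a global chain-rule argument is unavailable. The rescue is that the one-sided envelope inequality uses the \emph{same} $y_t$ simultaneously for $\phi_0$ and $\phi_1$, so the argument can be run pointwise at each good $t$ and the specific value of $y_t$ disappears in the subtraction $\phi_0-\phi_1$. Boundedness of $\supp\mu$ is essential in two places: to place $y_t$ in a compact set so that $c(\cdot,y_t)$ is uniformly Lipschitz in a neighbourhood of $\omega(t)$, and to apply Lemma~\ref{lem: potential regularity} to obtain the local Lipschitz regularity of $\phi_i$ on $\R^d$.
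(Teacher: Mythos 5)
Your proposal is correct and follows essentially the same route as the paper: take $c$-concave representatives, use Lemma~\ref{lem: potential regularity} (via boundedness of $\supp\mu$) to get local Lipschitz regularity, parametrise along a Lipschitz path in $\supp\rho$, pinch the derivative at almost every $t$ via the contact inequality with a common $y_t$, and integrate. Your explicit remark that no measurability of $t\mapsto y_t$ is needed because the pinching is purely pointwise at each good $t$ is a useful clarification that the paper leaves implicit; your closing claim that $\pi_2(\supp\gamma)=\supp\mu$ is slightly too strong when $\supp\rho$ is unbounded, but $\pi_2(\supp\gamma)$ has full $\mu$-measure, which is all that is needed to conclude $\psi_0=\psi_1+\lambda$ $\mu$-a.e.
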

\begin{proof}
Let $\phi_0, \phi_1 \in L^1(\rho)$ be two optimal dual potentials for \eqref{intro: c dual problem}. Take any $x_0, x_1 \in \supp \rho$, let $\omega : [0, 1] \to \supp \rho$ be a Lipschitz curve with $\omega(0) = x_0$ and $\omega(1) = x_1$. We set
 \begin{equation*}
     \tilde\phi_i : [0, 1] \to \R; \quad\tilde\phi_i(t) := \phi_i(\omega(t)).
 \end{equation*}
By Lemma \ref{lem: potential regularity}, each $\phi_i$ is locally Lipschitz and hence each $\tilde\phi_i$ is Lipschitz as a composition of Lipschitz functions. By Rademacher's theorem, both $\tilde \phi_i$ and $\omega$ are differentiable $t$-a.e., so the intersection of these points has full Lebesgue measure on $[0, 1]$. Let $t$ be such a differentiability point.
Fix any optimal $\gamma \in \Gamma(\rho, \mu)$. Since $\supp \mu$ is bounded $p_X( \supp \gamma) = \supp \rho$ where $p_X$ denotes projection onto the $x$ coordinate, see Appendix, Lemma \ref{app: support projection lemma}. Hence for each $t \in [0, 1]$, there exists at least one $y$ such that $(\omega(t), y) \in \supp \gamma$. By the compatibility condition (\ref{intro: compatibility condition}), $y \in \partial^c \phi_i(\omega(t))$. Applying a $c$-superdifferential inequality \eqref{eq: c superdiff} at $\omega(t)$ for $t \in (0, 1)$ gives
\begin{equation*}
    c(\omega(t), y) - \phi_i(\omega(t)) \leq c(\omega(t+h), y) - \phi_i(\omega(t+h)).
\end{equation*}
Considering small $h>0$ and $h<0$, dividing by $h$ and passing to the limit $h \to 0$ which exists by differentiability, we deduce
\begin{equation*}
    \tilde\phi_i'(t) = \langle \nabla_x c(\omega(t), y) | \omega'(t)\rangle.
\end{equation*}
The above shows that for a.e. $t$, the projection of $\nabla_x c(\omega(t), \partial^c \phi_i )$ in the direction $\omega'(t)$ is unique, and specified to be the same for any dual potential. It follows that
\begin{equation*}
    \tilde\phi_1'(t) - \tilde\phi_0'(t) = 0 \text{ for a.e. } t \in [0, 1],
\end{equation*}
and consequently
\begin{equation*}
    \phi_0(x_1) - \phi_0(x_0) = \int_0^1 \tilde\phi_0'(t) \di t = \int_0^1 \tilde\phi_1'(t) \di t = \phi_1(x_1) - \phi_1(x_0)
\end{equation*}
so that $\phi_1(x_0) - \phi_0(x_0) = \phi_1(x_1) - \phi_0(x_1)$ for all $x_0, x_1 \in \supp \rho$. In other words, $\phi_0$ and $\phi_1$ are equal up to a constant.
\end{proof}

\begin{example}
For quadratic cost $c(x, y) = - \langle x | y \rangle$, it is the projection of $\partial\phi_i$ in the direction $\omega'(t)$ which is a.e. unique. In general, however, the set of $t \in [0, 1]$ along a given path for which there are multiple $y \in \partial \phi (\omega(t))$ can have non-zero Lebesgue measure. 
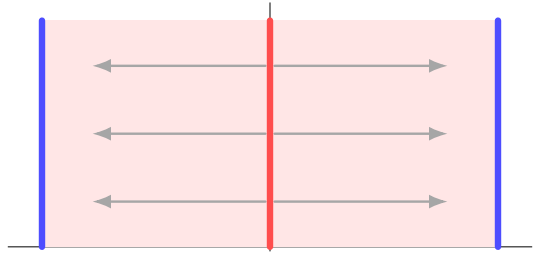
\begin{figure}[ht]
        \centering
\begin{tikzpicture}[>=Latex,scale=3]

  \tikzset{
    rho/.style   ={line width=2.4pt, red!70, line cap=round},
    mu/.style    ={line width=2.4pt, blue!70, line cap=round},
    plan/.style  ={->, line width=0.9pt, gray!70, line cap=round},
  }

  \draw[very thin] (-1.15,0) -- (1.15,0);
  \draw[very thin] (0,-0.02) -- (0,1.08);

  \fill[red!10] (0,0) rectangle (-1,1);
  \fill[red!10] (0,0) rectangle ( 1,1);

  \foreach \y in {0.20,0.50,0.80}{
    \draw[plan] (-0.02,\y) -- (-0.78,\y);
    \draw[plan] ( 0.02,\y) -- ( 0.78,\y);
  }

  \draw[mu]  (-1,0) -- (-1,1);    
  \draw[mu]  ( 1,0) -- ( 1,1);    
  \draw[rho] ( 0,0) -- ( 0,1);    

\end{tikzpicture}

        \caption{Transport between $\rho = \mathcal{H}^1|_{\{0\} \times [0, 1]}$ and $\mu= \frac{1}{2}(\mathcal{H}^1|_{\{-1\} \times [0, 1]} + \mathcal{H}^1|_{\{1\} \times [0, 1]})$. }
        \label{fig:uniqueness of gradient projection}
\end{figure}
The above proof establishes that the set of possible projections of all such $y$ in the direction $\omega'(t)$ is a.e. a singleton. Consider the following example given in \cite[Section 1.4]{santambrogio2015optimal} and illustrated in Figure \ref{fig:uniqueness of gradient projection}. On $\R^2$, let
    \begin{equation*}
        \rho = \mathcal{H}^1|_{\{0\} \times [0, 1]} \quad \text{ and } \quad \mu = \frac{1}{2}\mathcal{H}^1|_{\{-1\} \times [0, 1]} + \frac{1}{2}\mathcal{H}^1|_{\{1\} \times [0, 1]}.
    \end{equation*}
Here, Theorem \ref{thrm: uniqueness Lipschitz connected bounded} applies so we have uniqueness. Despite this, at every point in $\supp\rho$, any optimal potential's subgradient is multivalued; it must contain at least two points corresponding to the horizontal splitting of the mass of the optimal plan (and by convexity it will also contain the full line between them). But the projection of these gradient values vertically (directions which stay in $\supp \rho$) gives a single value.
\end{example}

\begin{example}
    There is no reason why we cannot have uniqueness of the dual problem when the primal problem has non-uniqueness. On $\R^2$, let
    \begin{equation*}
        \rho = \mathcal{H}^1|_{\{0\} \times [-1, 1]} \quad \text{ and } \quad \mu = \mathcal{H}^1|_{[1, 3] \times \{0\}}.
    \end{equation*}
    Here every transport plan $\gamma \in \Pi(\rho, \mu)$ is optimal, since $\langle x_0 - x_1 | y \rangle = 0$ for any $x_0, x_1 \in \supp \rho$ and $y \in \supp \mu$, and so all plans have cyclically monotone support.

\begin{figure}[ht]
\centering
\begin{tikzpicture}[>=Latex,scale=2]

  \tikzset{
    rho/.style   ={line width=2.4pt, red!70, line cap=round},
    mu/.style    ={line width=2.4pt, blue!70, line cap=round},
  }

  \fill[red!10] (0,-1) -- (0, 1) -- (2.5,0) -- cycle;
  \fill[red!10] (0,-1) -- (0, 1) -- (0.5,0) -- cycle;

  \draw[very thin] (-0.2,0) -- (2.7,0);
  \draw[very thin] (0,-1.1) -- (0,1.1);

  \draw[rho] (0,-1) -- (0,1);        
  \draw[mu]  (0.5,0) -- (2.5,0);     

\end{tikzpicture}

\caption{Transport between $\mathcal{H}^1|_{\{0\} \times [-1, 1]}$ and $\mu = \mathcal{H}^1|_{[1, 3] \times \{0\}}$.}
\label{fig:primal-nonunique-dual-unique}
\end{figure}
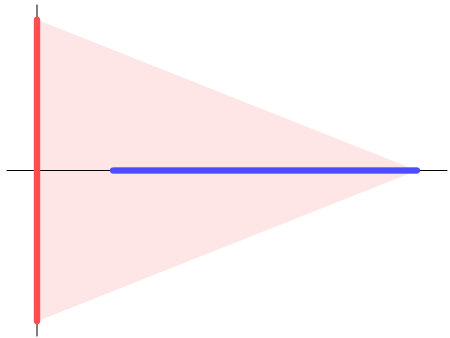
Any plan is optimal, so the primal has non-uniqueness while the dual solution is unique up to a constant.

\end{example}

\begin{remark}
\label{rmk: decomposition into lip path components}
    Assume we have two Lipschitz-path connected sets $A_0, A_1$ in $\supp \rho$, and consider a ``distance" between each defined by
    \begin{equation*}
        d(A_0, A_1) = \inf_{\substack{x_0 \in A_0\\ x_1 \in A_1}} \|x_0 - x_1\|.
    \end{equation*}
    (Of course, this is not a proper distance, as it satisfies neither positive definiteness nor the triangle inequality.)
    If $d(A_0, A_1) = 0$, then in the quadratic case, (still assuming $\supp \mu$ is bounded so that $\phi$ is Lipschitz) $\phi$ is unique up to a constant on $A_0 \cup A_1$. To see this, let $\phi_0$ and $\phi_1$ be two optimal potentials. Theorem \ref{thrm: uniqueness Lipschitz connected bounded} tells us that when restricted to each $A_i$, $\phi_1 = \phi_0 + a_i$ for some $a_i \in \R$.
    Take $\{x_i^n\}_{k=1}^\infty \subset A_i$ points such that $\|x_0^n - x_1^n\| \to 0$ as $n \to \infty$. Then
    \begin{equation*}
        |a_0 - a_1| = |\phi_1(x_0^n) - \phi_0(x_0^n) - (\phi_1(x_1^n) - \phi_0(x_1^n))| \leq 2L\|x_0^n - x_1^n\| \to 0.
    \end{equation*}
    Where $L$ is the Lipschitz constant of the $\phi_i$ (which can be taken to be $R_{\supp\mu}$, the radius of the smallest ball containing $\supp \mu$). Hence, $a_0 = a_1$ and potentials are specified up to a constant on $A_0\cup A_1$ as required. More generally, the above can easily be extended to $\rho$ with support which is connected and is the union of finitely many Lipschitz-path connected components. Here, one can decompose $\rho$ into its Lipschitz-path components, observe that the graph with vertices as these components and an edge if and only if the two components are distance zero apart is connected, then argue as above to deduce that the potential is unique up to a constant across the entire support. The limit of applicability for this argument seems to be to any $\supp \rho$ such that for any $x_0, x_1 \in \supp \rho$,
    \begin{equation*}
        \inf_{\substack{\omega: [0, 1] \to \supp \rho\\ \omega(0) = x_0 \; \omega(1) = x_1}} \left\{ \sum_{k=1}^\infty \|\omega(t^+_k)- \omega(t_k^-)\| : \omega \text{ piecewise Lipschitz with jumps at } t_0, t_1, ... \right\} = 0.
    \end{equation*}
    However, this technique does not seem amenable to a more general connected $\supp\rho$, or even just a path-connected $\supp \rho$ (without the hypothesis that the path should be Lipschitz). In particular, assume $f: [0, 1] \to [0, 1]$ is a sample path of a Brownian motion. Consider $\rho = (\id, f)_\#\text{Leb}|_{[0,1]}$. Then $\supp \rho$ is path connected, but every path connecting two points is locally of infinite length and nowhere differentiable, so the above argument cannot propagate uniqueness along such a curve.
\end{remark}
For $c(x, y) = \|x-y\|^2$, in certain cases, the above argument also allows the removal of the assumption that $\supp \mu$ is bounded.
\begin{definition}
 The relative interior of a set, denoted $\ri$ is the interior taken with respect to the affine hull of the set rather than the full space. Here, the affine hull of $\X \subset \R^d$ is defined as
 \begin{equation*}
     \aff(\X) :=\left\{ \sum_{i=1}^k \lambda_i x_i | k>0, x_i \in \X, \lambda_i \in \R \text{ with } \sum_{i=1}^k \lambda_i = 1\right\}.
 \end{equation*}
\end{definition}

\begin{theorem}\textnormal{\cite[Theorem 10.4]{rockafellar2015convex}.}
\label{thrm: rockafellar local lip}
    Let $f$ be a proper convex function, then $f$ is locally Lipschitz on its restriction to $\ri (\dom f)$.
\end{theorem}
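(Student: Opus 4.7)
The plan is to reduce to the full-dimensional case by restricting to the affine hull: replacing the ambient space by $\aff(\dom f)$, I may assume that $\ri(\dom f) = \interior(\dom f)$ is an open subset of some $\R^k$, and it suffices to prove $f$ is locally Lipschitz on $\interior(\dom f)$. The argument then proceeds in three classical steps (local upper bound $\to$ two-sided bound $\to$ Lipschitz estimate).

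\emph{Step 1: local upper bound.} For any $x_0 \in \interior(\dom f)$, I would choose $k+1$ affinely independent points $v_0, \ldots, v_k \in \dom f$ such that $x_0$ lies in the interior of the simplex $S := \conv(v_0, \ldots, v_k)$. Any $x \in S$ can be written as $\sum_i \lambda_i v_i$ with $\lambda_i \geq 0$ and $\sum_i \lambda_i = 1$, so convexity of $f$ gives
\[ f(x) \leq \sum_{i=0}^k \lambda_i f(v_i) \leq M := \max_i f(v_i). \]
Thus $f \leq M$ on the open neighbourhood $\interior(S) \ni x_0$.

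\emph{Step 2: local two-sided bound.} Choose $r>0$ such that $B(x_0, 3r) \subset \interior(S)$. For any $y \in B(x_0, 3r)$, the reflected point $z := 2x_0 - y$ also lies in $B(x_0, 3r)$, and since $x_0 = (y+z)/2$, convexity yields $f(y) \geq 2 f(x_0) - f(z) \geq 2 f(x_0) - M$. Combined with Step 1, this gives $|f| \leq M'$ on $B(x_0, 3r)$ for some $M' < \infty$ depending only on $M$ and $f(x_0)$.

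\emph{Step 3: Lipschitz estimate.} For distinct $y_1, y_2 \in B(x_0, r)$, set $\alpha := \|y_2 - y_1\|/r \in (0, 2]$ and introduce the auxiliary point $z := y_2 + \alpha^{-1}(y_2 - y_1)$. Then $\|z - y_2\| = r$, so $z \in B(x_0, 2r) \subset B(x_0, 3r)$, and a direct computation shows $y_2 = (1-\lambda) y_1 + \lambda z$ with $\lambda = \alpha/(1+\alpha) \leq \alpha$. Convexity then gives
\[ f(y_2) - f(y_1) \leq \lambda \bigl( f(z) - f(y_1) \bigr) \leq 2M' \lambda \leq \frac{2M'}{r} \|y_2 - y_1\|, \]
and swapping $y_1, y_2$ yields the reverse inequality, proving that $f$ is Lipschitz on $B(x_0, r)$ with constant $2M'/r$.

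The main technical point is the slightly delicate choice of the auxiliary point $z$ in Step 3, tailored so that $y_2$ falls on the segment $[y_1, z]$ inside the region of two-sided boundedness, with barycentric weight $\lambda$ linear in $\|y_2 - y_1\|$. Once two-sided local boundedness is in hand, this is the standard device that converts boundedness into Lipschitz regularity; all other steps are routine applications of convexity.
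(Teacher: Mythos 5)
Your proof is correct. The paper gives no proof of this statement --- it is a citation to Rockafellar's Theorem 10.4 --- so there is nothing in-text to compare against. Your argument is the standard self-contained one (simplex gives a local upper bound, reflection through $x_0$ upgrades it to a two-sided bound, and a chord argument converts boundedness to an explicit Lipschitz estimate), whereas Rockafellar's own proof is organized differently: he first establishes continuity relative to $\ri(\dom f)$ via a closure/lower-semicontinuity argument (his Theorem 10.1), and then obtains a Lipschitz bound on closed bounded subsets of $\ri(\dom f)$ by a compactness argument. Your version is more elementary and produces an explicit local Lipschitz constant $2M'/r$, at the modest cost of not directly giving the ``uniform Lipschitz on compact subsets'' formulation. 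The only trivial slip: since $y_1, y_2 \in B(x_0,r)$ with an open ball, $\alpha = \|y_2-y_1\|/r$ lies in $(0,2)$ rather than $(0,2]$; this has no effect on the argument.
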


\begin{theorem}
\label{thrm: unbounded target uniqueness}
Let $c(x, y) = \|x-y\|^2$, let $\rho$ and $\mu$ be probability measures on $\R^d$. Assume that $\supp \rho$ is Lipschitz-path connected such that for any $x_0, x_1 \in \supp \rho$ the path can be chosen with
\begin{equation}
\label{eq: interior of convex hull curve condition}
    \omega(t) \in \Int (\conv \supp \rho) \text{ for all } t \in [0, 1].
\end{equation}
Then optimisers of the dual problem (\ref{intro: c dual problem}) are unique up to a constant.
\end{theorem}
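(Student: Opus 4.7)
The plan is to mirror Theorem \ref{thrm: uniqueness Lipschitz connected bounded}, replacing the role of Lemma \ref{lem: potential regularity} (whose $c$-Lipschitz regularity of $\psi^c$ required bounded $\supp\mu$) with convex-analytic regularity specific to the quadratic cost. Fix $c$-concave representatives $\phi_0 = \psi_0^c, \phi_1 = \psi_1^c$ of two optimal dual potentials, and set $\varphi_i(x) := \|x\|^2 - \phi_i(x)$. A direct computation of the $c$-transform for $c(x,y)=\|x-y\|^2$ gives $\phi_i(x) = \|x\|^2 - g_i^*(2x)$ with $g_i(y) = \|y\|^2 - \psi_i(y)$, so each $\varphi_i$ is a proper lower semi-continuous convex function. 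Since $\phi_i \in L^1(\rho)$, the set $\dom \varphi_i \cap \supp\rho$ has full $\rho$-measure and is therefore dense in $\supp \rho$; convexity of $\dom \varphi_i$ then forces $\dom \varphi_i \supseteq \Int(\conv \supp \rho) =: \Omega$. Theorem \ref{thrm: rockafellar local lip} now gives that each $\varphi_i$, and hence each $\phi_i$, is locally Lipschitz on $\ri(\dom \varphi_i) \supseteq \Omega$.

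The interior hypothesis \eqref{eq: interior of convex hull curve condition} forces $\supp \rho \subseteq \Omega$, so for any $x_0, x_1 \in \supp \rho$ the Lipschitz path $\omega : [0,1] \to \supp \rho \subseteq \Omega$ has compact image in $\Omega$ on which $\phi_i$ has a uniform Lipschitz constant, and $\tilde \phi_i := \phi_i \circ \omega$ is Lipschitz, hence differentiable a.e.\ by Rademacher.

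The crucial new step replaces the identity $p_X(\supp \gamma) = \supp \rho$ from Lemma \ref{app: support projection lemma}, which is unavailable for unbounded $\supp \mu$. The replacement is the following: \emph{for every $x \in \Omega$, $\partial^c \phi_0(x) \cap \partial^c \phi_1(x) \neq \emptyset$}. To prove this, fix any optimal $\gamma \in \Gamma(\rho,\mu)$. The compatibility condition \eqref{intro: compatibility condition} yields $\supp \gamma \subseteq \partial^c \phi_0 \cap \partial^c \phi_1$, so every $x_n \in p_X(\supp \gamma)$ admits some $y_n \in \partial^c \phi_0(x_n) \cap \partial^c \phi_1(x_n)$, equivalently $2y_n \in \partial \varphi_0(x_n) \cap \partial \varphi_1(x_n)$. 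Given $x \in \Omega$, density of $p_X(\supp \gamma)$ in $\supp \rho$ (it has full $\rho$-measure) provides $x_n \in p_X(\supp \gamma)$ with $x_n \to x$, eventually lying inside a fixed neighbourhood $U \subset \Omega$ of $x$ on which both $\varphi_i$ are Lipschitz. The associated $y_n$ are then uniformly bounded; extracting a convergent subsequence $y_n \to y$ and invoking the closed-graph property of the subdifferential of a convex function yields $2y \in \partial \varphi_0(x) \cap \partial \varphi_1(x)$, i.e.\ $y \in \partial^c \phi_0(x) \cap \partial^c \phi_1(x)$.

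With the claim in hand, the remainder of the argument from Theorem \ref{thrm: uniqueness Lipschitz connected bounded} transposes verbatim: at a.e.\ differentiability point $t$ of both $\tilde \phi_i$ and $\omega$, pick $y \in \partial^c \phi_0(\omega(t)) \cap \partial^c \phi_1(\omega(t))$ and apply \eqref{eq: c superdiff} to deduce $\tilde \phi_i'(t) = \langle \nabla_x c(\omega(t), y) | \omega'(t) \rangle$, a quantity independent of $i$. Integrating on $[0,1]$ gives $\phi_0(x_1) - \phi_0(x_0) = \phi_1(x_1) - \phi_1(x_0)$ for arbitrary $x_0, x_1 \in \supp \rho$, i.e.\ uniqueness up to a constant. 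The main obstacle is the common-subgradient claim, where the interior hypothesis on the convex hull is essential: it is exactly what makes Rockafellar's theorem applicable, uniformly bounds the candidate subgradients $y_n$ along the approximating sequence, and saves the upper-semicontinuity limit argument — without it, for unbounded $\supp \mu$ the $y_n$ could escape to infinity and the closed-graph step would fail.
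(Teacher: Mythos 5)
Your proposal is correct, and it takes a genuinely different route from the paper. The paper's proof is indirect: for each $t$ along the path, it chooses a small ball $B_\varepsilon(\omega(t)) \subset \Int(\conv\supp\rho)$, restricts $\rho$ and the optimal plan $\gamma$ to this ball, observes via Theorem \ref{thrm: rockafellar local lip} that the image measure $\tilde\mu_\gamma$ then has bounded support, and applies Theorem \ref{thrm: uniqueness Lipschitz connected bounded} to this localised problem. Global uniqueness up to a constant follows by chaining overlapping balls along $\omega$. Your proof instead imports the quadratic-cost structure up front (writing $\varphi_i = \|\cdot\|^2 - \phi_i = g_i^*(2\,\cdot)$), uses Rockafellar to get locally uniform Lipschitz bounds for $\varphi_i$ on $\Omega = \Int(\conv\supp\rho)$, and then re-derives the needed common $c$-superdifferential element at each point of the path by an explicit compactness and closed-graph argument on $\partial\varphi_i$, rather than by invoking Lemma \ref{app: support projection lemma}. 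This is arguably the cleaner route conceptually: it isolates exactly where the interior hypothesis matters (it both makes Rockafellar applicable and caps the norm of candidate subgradients, preventing escape to infinity), and it circumvents the localisation and plan-cutting machinery entirely. Indeed, as a by-product your closed-graph step shows $(x,y) \in \supp\gamma$ (since $\supp\gamma$ is closed), which is precisely the inclusion $\supp\rho \subseteq p_X(\supp\gamma)$ that the paper's own follow-up remark identifies as the hard part of a more general extension.

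Two small points. First, the common-subgradient claim should be stated for $x \in \supp\rho$ rather than for all $x \in \Omega$: the density argument for the approximating sequence $x_n \in p_X(\supp\gamma)$ only gives $x_n \to x$ when $x$ lies in $\supp\rho$ (density is in $\supp\rho$, not in $\Omega$). This is harmless because the path $\omega$ lies in $\supp\rho \subseteq \Omega$ by the stated hypothesis (since $\omega(0) = x_0$ and $\omega(1) = x_1$ must themselves lie in $\Omega$), so the claim is needed only there. Second, the step ``convexity of $\dom\varphi_i$ forces $\dom\varphi_i \supseteq \Int(\conv\supp\rho)$'' deserves a line more: density gives $\cl\dom\varphi_i \supseteq \cl\conv\supp\rho$, hence $\Omega \subseteq \Int\cl\dom\varphi_i = \ri\cl\dom\varphi_i = \ri\dom\varphi_i \subseteq \dom\varphi_i$, using the standard identity $\ri\cl D = \ri D$ for a convex set $D$; worth spelling out.
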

\begin{proof}
    This follows almost directly from the proof of Theorem \ref{thrm: uniqueness Lipschitz connected bounded}. Fix a $t$ where \eqref{eq: interior of convex hull curve condition} holds,
    and take $\varepsilon>0$ with $B_\varepsilon(\omega(t)) \subset \Int \conv (\supp \rho)$. By definition of the support, $\rho(B_\varepsilon(\omega(t)))>0$, and so given some optimal plan $\gamma \in \Pi(\rho, \mu)$, we define
    \begin{equation*}
        \tilde\rho = \frac{1}{\rho(B_\varepsilon(\omega(t)))} \rho|_{B_\varepsilon(\omega(t))}
    \end{equation*}
    and its image measure under $\gamma$ denoted $\tilde\mu_\gamma$, defined by
    \begin{equation*}
        \tilde\mu_\gamma(A) = \frac{1}{\rho(B_\varepsilon(\omega(t)))} \gamma(B_\varepsilon(\omega(t))) \times A) \text{ for all measurable } A \subset \R^d.
    \end{equation*}
    By Theorem \ref{thrm: rockafellar local lip}, any optimal potential $\phi$ is Lipschitz when restricted to $B_\varepsilon(\omega(t))$, and so it follows that $\supp \tilde\mu_\gamma$ is bounded. Consequently, we can apply Theorem \ref{thrm: uniqueness Lipschitz connected bounded} to the dual problem between $\tilde\rho$ and $\tilde\mu_\gamma$, which gives that the restrictions of all potentials for the original problem are unique up to a constant on $\supp \rho \cap B_\varepsilon(\omega(t))$. We conclude that potentials are unique globally up to a constant on $\supp \rho$ as required, by choosing overlapping neighbourhoods covering the path $\omega$.
\end{proof}
\begin{remark}
 One could attempt to extend the above to only requiring 
\begin{equation*}
    \omega(t) \in \ri (\conv \supp \rho) \text{ for all } t \in [0, 1],
\end{equation*}
following the same proof only instead using balls $B^\text{ri}_\varepsilon$ with respect to $\aff\supp \rho$ rather than all of $\R^d$. There are two places in the proof of Theorem \ref{thrm: uniqueness Lipschitz connected bounded} uses that $\supp \mu$ is bounded. Once is for the potentials to be Lipschitz, this extends directly to a relative interior assumption by Theorem \ref{thrm: rockafellar local lip}. The second is to apply Lemma \ref{app: support projection lemma} and deduce $\supp \rho \subset p_X(\supp \gamma)$. This requires some work, and one should prove that for an optimal quadratic cost $\gamma$,
\begin{equation*}
    (\ri \conv \supp \rho) \cap \supp \rho \subset p_X(\supp \gamma).
\end{equation*}
\end{remark}

We now present an analogue of the bipartite graph condition Proposition \ref{prop: acciaio discrete dual uniqueness} for the continuous problem.

\begin{theorem}
\label{thrm: uniqueness graph continuous case}
    Let $\rho$ and $\mu$ be probability measures on compact $\X$ and $\Y$ subsets of $\R^d$. Let $c \in C^1(\X \times \Y)$. Assume that each support is a finite union of Lipschitz-path connected components $(A_i)_{i \in I}$ and $(B_j)_{j \in J}$ respectively. Define the bipartite graph $G = (V, E)$ with $V = (A_i)_{i \in I} \cup (B_j)_{j \in J}$ and 
    \begin{equation*}
        (A_i, B_j) \in E \iff \text{ there exists an optimal } \gamma \in \Pi(\rho, \mu) \text{ with } \gamma(A_i \times B_j) > 0.
    \end{equation*}
    If $G$ is connected, then solutions to the dual problem (\ref{intro: c dual problem}) are unique up to a constant. 
\end{theorem}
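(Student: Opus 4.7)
The plan is to apply Theorem \ref{thrm: uniqueness Lipschitz connected bounded} componentwise and then propagate the resulting local constants through the bipartite graph $G$. Let $(\phi_0,\psi_0)$ and $(\phi_1,\psi_1)$ be two optimal dual pairs, chosen as $c$-concave representatives so that they are locally Lipschitz on $\X$ (respectively $\Y$) by Lemma \ref{lem: potential regularity}; the symmetric version on $\Y$ holds because $c \in C^1(\X \times \Y)$ on compact domains satisfies Assumption \ref{assumption: cost hypotheses} in both variables. I would then run the argument of Theorem \ref{thrm: uniqueness Lipschitz connected bounded} restricted to each $A_i$: given two points of $A_i$, a Lipschitz path $\omega$ in $A_i$ exists by hypothesis; at each differentiability point some $y \in \partial^c \phi_k(\omega(t))$ is available (by the support projection argument used in that proof, valid since $\supp\mu$ is compact); the resulting directional-derivative identity
\begin{equation*}
    \tilde\phi_k'(t) = \langle \nabla_x c(\omega(t), y) | \omega'(t) \rangle
\end{equation*}
is independent of $k \in \{0,1\}$, so integrating produces a constant $a_i \in \R$ with $\phi_1 - \phi_0 \equiv a_i$ on $A_i$. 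Running the symmetric argument on $\Y$ yields constants $b_j \in \R$ with $\psi_1 - \psi_0 \equiv b_j$ on $B_j$ for every $j \in J$.

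Next I would link these local constants across the edges of $G$. For each $(A_i, B_j) \in E$, choose an optimal $\gamma$ with $\gamma(A_i \times B_j) > 0$; then $\supp\gamma \cap (A_i \times B_j)$ is non-empty, so pick some $(x,y)$ in the intersection. The compatibility condition \eqref{intro: compatibility condition} applied to both optimal pairs gives
\begin{equation*}
    \phi_0(x) + \psi_0(y) = c(x,y) = \phi_1(x) + \psi_1(y),
\end{equation*}
and subtracting yields $a_i + b_j = 0$. To conclude, fix a base vertex $A_{i_0}$ and set $a := a_{i_0}$. Connectedness of $G$ provides, for every other vertex, a path alternating between the $A$-side and the $B$-side; each $A$-to-$B$ traversal forces $b_{j_k} = -a_{i_{k-1}}$ and the subsequent $B$-to-$A$ traversal forces $a_{i_k} = -b_{j_k} = a_{i_{k-1}}$. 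By induction $a_i = a$ for every $i \in I$ and $b_j = -a$ for every $j \in J$, so $(\phi_1,\psi_1) = (\phi_0 + a, \psi_0 - a)$, which is exactly uniqueness up to a constant.

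The main subtlety is justifying that the proof of Theorem \ref{thrm: uniqueness Lipschitz connected bounded} transfers verbatim to a single component $A_i$ rather than requiring $\supp\rho$ to be globally Lipschitz-path connected. That proof uses only Lipschitz-path connectedness of the traversed domain (which $A_i$ provides), local Lipschitz regularity of the potentials (which is global on $\X$ and hence restricts to $A_i$), and the existence of some $y \in \partial^c \phi_k(\omega(t))$ at each path point (which needs $\supp\mu$ bounded, with no constraint on which $B_j$ contains $y$). All three hold, so the localisation is clean. A minor technical point is that $\supp\gamma \cap (A_i \times B_j) \neq \emptyset$ whenever $\gamma(A_i \times B_j) > 0$, which is standard once one notes that the $A_i, B_j$ must be Borel for the edges of $G$ to be defined.
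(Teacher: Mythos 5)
Your proof is correct and structurally mirrors the paper's: componentwise uniqueness up to constants $a_i$, $b_j$, then propagation through $G$ via the compatibility condition. The genuine difference is in how the componentwise step is established. The paper applies Theorem~\ref{thrm: uniqueness Lipschitz connected bounded} as a black box to a restricted sub-problem on each $A_i$, invoking Lemma~\ref{app: subplan is optimal} to know that a sub-plan $\tilde\gamma\leq\gamma$ is optimal between its own marginals, so the original dual potentials, restricted, remain dual-optimal for the sub-problem whose source support is the Lipschitz-path connected set $A_i$. You instead re-run the \emph{proof} of Theorem~\ref{thrm: uniqueness Lipschitz connected bounded} localised to a single component, noting (correctly) that all three ingredients --- a Lipschitz path within the traversed domain, local Lipschitz regularity of the $c$-concave potentials, and availability of some $y \in \partial^c\phi_k(\omega(t))$ for each $t$ from a single fixed optimal $\gamma$, hence the same $y$ for both $k$ --- survive restriction to $A_i$. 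This avoids the appendix lemma at the cost of re-deriving rather than citing. Your edge-linking step is also a bit tidier: one point of $\supp\gamma \cap (A_i \times B_j)$ per edge yields $a_i + b_j = 0$ directly, whereas the paper telescopes through a longer chain of auxiliary points $x_k, \hat x_k, y_k, \hat y_k$ and along the way writes the compatibility equalities with $\langle \hat x_k | y_k\rangle$ where $c(\hat x_k, y_k)$ is intended for general $c$, an oversight your version silently corrects.
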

\begin{proof}
Let $\phi_0, \phi_1 \in C_b(\X)$ be two Brenier potentials, let $\psi_0, \psi_1 \in C_b(\Y)$ be their corresponding dual potentials. In general, if $\gamma \in \Pi(\rho, \mu)$ is optimal, then any $\tilde\gamma \leq \gamma$ is also optimal for the $c$ cost transport between its marginals, see Appendix, Lemma \ref{app: subplan is optimal}. Consequently, the compatibility condition (\ref{intro: compatibility condition}) implies that if $\phi \in L^1(\rho)$ is optimal for the dual problem over $\Pi(\rho, \mu)$, then it is optimal for the dual problem over $\Pi(\tilde\rho, \tilde\mu)$, since $\supp \tilde \gamma \subset \supp \gamma \subset \partial \phi$. 

Both $\X$ and $\Y$ are compact and $c\in C^1$ so $c$ is also Lipschitz, and hence satisfies Assumption \ref{assumption: cost hypotheses} in both variables. Thus by Theorem \ref{thrm: uniqueness Lipschitz connected bounded}, there exist $a_i, b_j \in \R$ such that on each component $A_i$, $\phi_0 = \phi_1 + a_i$ and on each component $B_j$, $\psi_0 = \psi_1 + b_j$. We want to show that all $a_i$ are equal to some $a \in \R$, and consequently each $b_j = -a$ also. 

Fix some $i, i'$, let $x \in A_i$ and $x' \in A_{i'}$. We will show that $a_i = \phi_0(x) - \phi_1(x) = \phi_0(x') - \phi_1(x') = a_{i'}$. Let $i=i_1, j_1, i_2, j_2,..., j_{n-1}, i_n = i'$ be a path between $A_i$ and $A_{i'}$ in $G$. Then there exist
\begin{equation*}
    x=x_{1}, \hat x_{1}, y_{1}, \hat y_{1}, x_{2}, \hat x_{2},..., y_{{n-1}}, \hat y_{{n-1}}, x_{n}, \hat x_{n} = x'
\end{equation*}
with $x_{k}, \hat x_{k} \in A_{i_k}$ and $y_{k}, \hat y_{k} \in B_{i_k}$ such that for each $k=1,...,n-1$ there exists $\gamma \in \Pi(\rho, \mu)$ optimal with $(\hat x_{k}, y_{k}) \in \supp \gamma \cap (A_{i_k} \times B_{j_k})$ and there exists $\gamma \in \Pi(\rho, \mu)$ optimal with $(x_{{k+1}}, \hat y_{k}) \in \supp \gamma \cap (A_{i_{k+1}} \times B_{j_k})$. Consequently, $(\hat x_{k}, y_{k}), (x_{{k+1}}, \hat y_{k}) \in \partial \phi_i$ and so for $l = 0, 1$,
\begin{equation}
    \phi_l(\hat x_{k}) + \psi_l(y_k) = \langle\hat{x}_k | y_k \rangle \quad \text{ and }\quad \phi_l(x_{k+1}) + \psi_l(\hat y_k) = \langle \hat{x}_{k+1} | y_k \rangle.
\end{equation}
It follows that
\begin{align*}
    a_i &= \phi_0(x) - \phi_1(x) = \phi_0(\hat x_1) - \phi_1(\hat x_1) = \psi_1(y_1) - \psi_0(y_1)\\ &= \psi_1(\hat y_1) - \psi_0( \hat y_1) = \phi_0(x_2) - \phi_1( x_2) = \cdots = \phi_0(\hat x_n) - \phi_1(\hat x_n) = a_{i'},
\end{align*}
and so $a_i = a_{i'} = -b_{j} = -b_{j'}$ for all $i, j \in I \times J$. In other words, the dual potentials are unique up to a constant as required.
\end{proof}

\begin{remark}
    In \cite{staudt2025uniqueness}, the authors establish uniqueness under a hypothesis they refer to as \textit{non-degeneracy}; that there exist no subsets of indices $I' \subset I$ and $J' \subset J$ such that
\begin{equation}
    \label{intro:eq: german nondegeneracy}
    0<\rho( \cup_{i \in I'} A_i) = \mu( \cup_{j \in J'} B_j)<1. 
\end{equation}
This hypothesis is much stronger; it forces that \textit{any} $\gamma \in \Pi(\rho, \mu)$, its support graph $G_\gamma$ is connected. To see this, take any $\gamma \in \Pi(\rho, \mu)$ and assume there exist at least two distinct connected components of $G_\gamma$, choose one, and set $I'$ and $J'$ to be all the indices of vertices corresponding to this component. By definition of $G_\gamma$, for any $i \in I'$ and $j \notin J'$, $\gamma(A_i \times B_j) = 0 $ and similarly for $i \notin I'$ and $j \in J'$. Consequently
\begin{equation*}
    \rho( \cup_{i \in I'} A_i) = \gamma( \cup_{i \in I'} A_i \times \Y) = \gamma( \cup_{i \in I'} A_i \times \cup_{j \in J'} B_j) = \gamma( \X \times \cup_{j \in J'} B_j) = \mu( \cup_{j \in J'} B_j).
\end{equation*}
By the assumption that $G_\gamma$ has more than one component, each of $I'$ and $J'$ correspond to a proper subset of the sets of support components of $\rho$ and $\mu$ respectively, and hence the $0< \rho( \cup_{i \in I'} A_i) = \mu( \cup_{j \in J'} B_j) < 1$.
\end{remark}

\begin{remark}
    It would be interesting to consider if the above can be extended to the multi-marginal transport problem, considering the connectivity of an $n$-partite hypergraph.
\end{remark}

When there are an infinite number of components, there can exist $A_i$ with $\rho(A_i)= 0$ and $B_j$ with $\mu(B_j) = 0$. This can occur when these components are limit sets of points from other components, see Example \ref{ex: uniqueness despite disconnected graph}.
\begin{example}[Uniqueness of potentials despite disconnected graph]
\label{ex: uniqueness despite disconnected graph}
    In $\R^2$, set $\rho = \frac{1}{2}\mathcal{H}^1_{\{-1\} \times [0, 1]} + \frac{1}{2}\mathcal{H}^1_{\{1\} \times [0, 1]}$. Define the sequence of points $\{x_n\}_{n \in \N} \subset \R^2$ by
    \begin{equation*}
        x_{2n -1} := \left(-\frac{1}{2n}, \frac{1}{n}\right); \quad  x_{2n} := \left(\frac{1}{2n}, \frac{1}{n}\right),
    \end{equation*}
    then set
    \begin{equation*}
        \mu = \sum_{n = 1}^\infty 2^{-n}(\delta_{x_{2n-1}} + \delta_{ x_{2n}}),
    \end{equation*}
see Figure \ref{fig: disconnected graph preserves uniqueness of potentials}.
\begin{figure}[h]
    \centering
    
    \begin{minipage}{0.48\linewidth}
      \centering
    \begin{tikzpicture}[>=Latex,scale=3]
    
    \tikzset{
      rho/.style   ={line width=1.7pt, red!70, line cap=round},
      mupt/.style  ={circle, fill=blue!70, inner sep=0.7pt},
      plan/.style  ={->, very thin, gray!65},
    }
    
    \draw[very thin] (-1.1,0) -- (1.1,0);
    \draw[very thin] (0,0) -- (0,1.1);
    
    \foreach \n in {1,...,7}{
      \pgfmathsetmacro{\y}{1/(\n)}
      \pgfmathsetmacro{\xL}{-1/(2*\n)}
      \pgfmathsetmacro{\xR}{ 1/(2*\n)}
      \pgfmathsetmacro{\ylow}{pow(2,-\n)}
      \pgfmathsetmacro{\yhigh}{pow(2,-(\n-1))}
      \pgfmathsetmacro{\ymid}{(\ylow+\yhigh)/2}
    
      \fill[red!18] (-1,\ylow) -- (-1,\yhigh) -- (\xL,\y) -- cycle;
      \fill[red!18] ( 1,\ylow) -- ( 1,\yhigh) -- (\xR,\y) -- cycle;
    
      \draw[plan] (-1,\ymid) -- (\xL,\y);
      \draw[plan] ( 1,\ymid) -- (\xR,\y);
    
      \node[mupt] at (\xL,\y) {};
      \node[mupt] at (\xR,\y) {};
    }
    
    \foreach \n in {8,...,30}{
      \pgfmathsetmacro{\y}{1/(\n)}
      \pgfmathsetmacro{\xL}{-1/(2*\n)}
      \pgfmathsetmacro{\xR}{ 1/(2*\n)}
      \pgfmathsetmacro{\ylow}{pow(2,-\n)}
      \pgfmathsetmacro{\yhigh}{pow(2,-(\n-1))}
      \pgfmathsetmacro{\ymid}{(\ylow+\yhigh)/2}
    
      \draw[plan] (-1,\ymid) -- (\xL,\y);
      \draw[plan] ( 1,\ymid) -- (\xR,\y);
    
      \node[mupt] at (\xL,\y) {};
      \node[mupt] at (\xR,\y) {};
    }
    
    \draw[blue!70, thick] (0,0) circle (0.8pt);
    
    \draw[rho] (-1,0) -- (-1,1);
    \draw[rho] ( 1,0) -- ( 1,1);
    \end{tikzpicture}
    \end{minipage}\hfill
    \begin{minipage}{0.48\linewidth}
    \centering
    \begin{tikzpicture}[baseline=(current bounding box.center), node distance=7mm]

\def\nodesize{6.5mm}   
\def\xsep{1.4cm}
\def\outersep{2cm}
\def\yTop{1.55cm}
\def\yStep{0.8cm}
\def\drop{0.6cm}

\tikzset{
  redcircle/.style  ={circle, draw=red!70, thick, fill=red!12,  minimum size=\nodesize, inner sep=0pt},
  bluecircle/.style ={circle, draw=blue!70, thick, fill=blue!12, minimum size=\nodesize, inner sep=0pt},
  lab/.style        ={font=\scriptsize, inner sep=0pt, outer sep=0pt},
  edge/.style       ={thin}
}

\newcommand{\fixnode}[4]{
  \node[#2] (#1) at #3 {};
  \node[lab] at (#1) {#4};
}

\fixnode{x1}{bluecircle}{(-\xsep,\yTop)}{$B_1$}
\fixnode{x2}{bluecircle}{($(x1)-(0,\yStep)$)}{$B_3$}
\fixnode{x3}{bluecircle}{($(x2)-(0,\yStep)$)}{$B_5$}
\fixnode{x4}{bluecircle}{($(x3)-(0,\yStep)$)}{$B_7$}
\node at ($(x4)-(0,0.5cm)$) {$\vdots$};

\fixnode{xt1}{bluecircle}{(\xsep,\yTop)}{$B_2$}
\fixnode{xt2}{bluecircle}{($(xt1)-(0,\yStep)$)}{$ B_4$}
\fixnode{xt3}{bluecircle}{($(xt2)-(0,\yStep)$)}{$ B_6$}
\fixnode{xt4}{bluecircle}{($(xt3)-(0,\yStep)$)}{$ B_8$}
\node at ($(xt4)-(0,0.5cm)$) {$\vdots$};

\fixnode{A1}{redcircle}{($(x3)+(-\outersep,0)$)}{$A_1$}
\fixnode{A2}{redcircle}{($(xt3)+(\outersep,0)$)}{$A_2$}

\foreach \k in {1,2,3,4}{
  \draw[edge] (A1) -- (x\k);
  \draw[edge] (A2) -- (xt\k);
}

\fixnode{O}{bluecircle}{($ (x4)!0.5!(xt4) + (0,-\drop) $)}{$B_0$}

\end{tikzpicture}
    \end{minipage}

    \caption{Transport between $\rho$ and $\mu$ (Left), and the Bipartite support graph (Right). The graph is disconnected, but the uniqueness of potentials holds as the components are ``asymptotically connected" via the point $(0, 0)$.}
    \label{fig: disconnected graph preserves uniqueness of potentials}
\end{figure}
The connected components of $\supp \rho$ are $A_1 = \{-1\} \times [0, 1]$ and $A_2 = \{1\} \times [0, 1]$. The connected components of $\supp \mu$ are $B_0 = \{(0, 0)\}$ and $B_n = \{x_n\}$ for $n \in \N$. Here, the component $B_0$ receives no mass from $\mu$, but belongs to the support nonetheless as a limit point of points from the other components. The graph for the transport has three disjoint components, one from the left pairings of points and one from the right. Theorem \ref{thrm: uniqueness graph continuous case} gives that the potentials are defined up to a constant on each component of the graph. Then continuity of any Brenier potential at $(0, 0)$ forces uniqueness up to a constant globally, as a consequence of the arguments presented in Remark \ref{rmk: decomposition into lip path components}.
\end{example}

One can also use calculations similar to those in the proof of Lemma \ref{lemma: balinski dual extreme vertices characterisation} to ascertain quantitative uniqueness results, i.e. bounds on the diameter of the optimal set. In particular, we have the following
\begin{proposition}
    Let $\rho$ and $\mu$ be probability measures on $\R^d$, let $c(x, y) = \|x-y\|^2$. Assume that each support is a finite union of Lipschitz-path connected components $(A_i)_{i \in I}$ and $(B_j)_{j \in J}$ respectively. Assume that the graph $G$ defined in Theorem \ref{thrm: uniqueness graph continuous case} consists of two connected components $G = G_0 \sqcup G_1$. Then for any two Brenier potentials $\phi_0, \phi_1$, we have
    \begin{equation}
    \label{eq: basic quantitative uniqueness statement}
        \inf_{\lambda \in \R} \| \phi_0 - \phi_1 - \lambda\|_{\infty} \leq \frac{1}{2}\inf_{\substack{(x_0, y_0) \in \supp \Gamma \cap G_0\\ (x_1, y_1) \in \supp \Gamma \cap G_1}} \langle x_1 - x_0 | y_1 - y_0 \rangle.
    \end{equation}
\end{proposition}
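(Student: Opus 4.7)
The plan is to reduce to each connected component, quantify the remaining freedom as a single additive constant per component, and then use cyclical monotonicity on a four-point cross-component pair to bound the gap between these constants. The overall structure mirrors the quantitative part of the Balinski proof of Lemma~\ref{lemma: balinski dual extreme vertices characterisation}, transposed to the continuous setting via Theorem~\ref{thrm: uniqueness graph continuous case}.

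First, I would restrict both marginals to each component: set $\rho_k$ and $\mu_k$ as the restrictions of $\rho$ and $\mu$ to the union of their components appearing in $G_k$. By Lemma~\ref{app: subplan is optimal}, the restriction of any optimal $\gamma \in \Pi(\rho, \mu)$ to the appropriate product is itself optimal for its own marginals, so $\phi_0, \phi_1$ remain Brenier potentials for each sub-problem. The support graph of the restricted problem is, by construction, the connected graph $G_k$. Theorem~\ref{thrm: uniqueness graph continuous case} then yields a constant $a_k \in \R$ with $\phi_0 = \phi_1 + a_k$ on $\supp \rho_k$. Using the on-support equality $\phi(x) + \phi^c(y) = c(x,y)$ for any pair on an edge of $G_k$ propagates this to the $y$-side with the opposite sign: $\phi_0^c = \phi_1^c - a_k$ on $\supp \mu_k$.

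Second, the left-hand side is explicit: since $\phi_0 - \phi_1$ takes only the two values $a_0, a_1$ on $\supp \rho$, the choice $\lambda = (a_0+a_1)/2$ is optimal, giving $\inf_{\lambda \in \R} \|\phi_0 - \phi_1 - \lambda\|_\infty = |a_0 - a_1|/2$.

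Third, I would bound $|a_0 - a_1|$ by the cyclical monotonicity gap of a cross-component quadruple. Pick $(x_0, y_0) \in \supp \Gamma \cap G_0$ and $(x_1, y_1) \in \supp \Gamma \cap G_1$. For both $\phi = \phi_0$ and $\phi = \phi_1$, the equalities $\phi(x_k) + \phi^c(y_k) = c(x_k, y_k)$ (on support) and the inequalities $\phi(x_k) + \phi^c(y_{1-k}) \leq c(x_k, y_{1-k})$ (admissibility) hold. Summing the two slacks $s_0(\phi) := c(x_0,y_1) - \phi(x_0) - \phi^c(y_1)$ and $s_1(\phi) := c(x_1,y_0) - \phi(x_1) - \phi^c(y_0)$ and eliminating the potential terms with the two support equalities gives
\begin{equation*}
    s_0(\phi) + s_1(\phi) = c(x_0,y_1) + c(x_1,y_0) - c(x_0,y_0) - c(x_1,y_1) =: g,
\end{equation*}
which for $c(x,y) = \|x-y\|^2$ evaluates to $2\langle x_1 - x_0 \mid y_1 - y_0\rangle$ (and to $\langle x_1 - x_0 \mid y_1 - y_0\rangle$ under the half-quadratic convention of the introduction). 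Now rewriting $s_0(\phi_0)$ in terms of $\phi_1$ via the relations from Step~1,
\begin{equation*}
    s_0(\phi_0) = c(x_0, y_1) - (\phi_1(x_0) + a_0) - (\phi_1^c(y_1) - a_1) = s_0(\phi_1) - (a_0 - a_1) \geq 0,
\end{equation*}
so $a_0 - a_1 \leq s_0(\phi_1) \leq g$. The symmetric manipulation of $s_1$ (or swapping the roles of $\phi_0$ and $\phi_1$) gives $a_1 - a_0 \leq g$. Thus $|a_0 - a_1| \leq g$, and taking infimum over admissible $(x_0,y_0), (x_1,y_1)$ combined with Step~2 yields the stated bound.

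The main obstacle is the sign bookkeeping between the $x$-side and the $y$-side of the translation relations: because the $c$-transform reverses the constant, $\phi_0 - \phi_1$ equals $+a_k$ on a source component but $\phi_0^c - \phi_1^c$ equals $-a_k$ on an adjacent target component, and it is precisely this mismatch that produces a nontrivial inequality when one plugs the translations into a cross-component admissibility constraint. A careless choice of which potential to use for the equalities and which for the inequalities collapses the argument to the tautology $0 \leq g$; one must specifically pair the constraint written for one potential with the on-support equalities written for the \emph{same} potential, and only then use the Step~1 identification to bring in the gap $a_0 - a_1$.
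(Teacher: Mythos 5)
Your structure — per-component constants via the restriction lemma, an explicit left-hand side $|a_0-a_1|/2$, and a cross-component cyclical-monotonicity slack to bound $|a_0-a_1|$ — is the right one, and after unpacking it coincides with the paper's argument: non-negativity of your slack $s_1(\phi) = c(x_1,y_0) - \phi(x_1) - \phi^c(y_0)$, combined with the on-support equality at $(x_0,y_0)$, is exactly the subdifferential inequality $\phi(x_1) \geq \phi(x_0) + \langle y_0 \mid x_1 - x_0\rangle$ the paper uses directly. Your Step 1 (via Theorem \ref{thrm: uniqueness graph continuous case} and Lemma \ref{app: subplan is optimal}) is a welcome formalisation of what the paper only asserts, namely that the potentials are determined up to a constant on each component.

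There is, however, a concrete factor-of-$2$ bookkeeping gap. The identities $\phi(x) + \phi^c(y) = c(x,y)$ (on support) and $\phi(x) + \phi^c(y) \leq c(x,y)$ (admissibility) hold for the $c$-concave \emph{dual} potential, not for the Brenier potential. The Brenier potential is the convex function $\phi$ with $y \in \partial\phi(x)$; what it satisfies on $\partial\phi$ is the Fenchel identity $\phi(x) + \phi^*(y) = \langle x\mid y\rangle$ with $\phi^*$ the Legendre conjugate, not the $c$-transform for $c = \|x-y\|^2$. Carrying out the slack sum with the Fenchel pair and the bilinear pairing gives $g = \langle x_1-x_0\mid y_1-y_0\rangle$ directly, and then $|a_0-a_1|/2 \leq g/2$ is exactly the stated bound. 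By contrast, your computation with $c=\|x-y\|^2$ yields $g=2\langle x_1-x_0\mid y_1-y_0\rangle$, but the $a_k$ are then constants for the $c$-dual potential $\phi^{\mathrm{dual}} = \|\cdot\|^2 - 2\phi^B$, whose oscillation is twice that of the Brenier potential $\phi^B$; the two factors of $2$ cancel, but you never state this conversion, so the argument as written ends with a bound twice the one claimed. You flag the ``half-quadratic convention'' as the variant giving the unscaled gap, which is the right instinct, but you should either commit to the Fenchel formulation throughout, or add a line converting $\inf_\lambda\|\phi^{\mathrm{dual}}_0 - \phi^{\mathrm{dual}}_1 - \lambda\|_\infty$ into $\inf_\lambda\|\phi^B_0 - \phi^B_1 - \lambda\|_\infty$ before concluding.
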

\begin{proof}
Assume we know the values of a Brenier potential $\phi$ on the $G_0$ component of $G$. For any $(x_0, y_0) \in \supp \Gamma \cap G_0$ and $(x_1, y_1) \in \supp \Gamma \cap G_1$, we have $y_0 \in \partial \phi(x_0)$ and $y_1 \in \partial \phi(x_1)$. By the subdifferential inequality applied at each $(x_i, y_i)$, the smallest and largest $\phi$ could be at $x_1$, in terms of $\phi(x_0)$, is
\begin{equation}
    \phi(x_1) \in \Big[\phi(x_0) + \langle y_0 | x_1 - x_0 \rangle,\, \phi(x_0) + \langle y_1 | x_1 - x_0 \rangle\Big].
\end{equation}
Hence for two potentials $\phi_0, \phi_1$ with $\phi_0(x_0) = \phi_1(x_0)$, rearranging the above gives
\begin{equation*}
    |\phi_0(x_1) - \phi_1(x_1)| \leq \langle x_1 - x_0 | y_1 - y_0 \rangle.
\end{equation*}
Since on each component of $G$ the potentials are unique up to a constant,
\begin{equation*}
    |\phi_0(x_1) - \phi_1(x_1) | = |\phi_0(x_1') - \phi_1(x_1') |
\end{equation*}
for all $x_1, x_1' \in G_1$, and so if the difference between two potentials is bounded at one place in $G_1$, the same bound holds everywhere. (Note here by $x \in G_1$ we mean $x$ belongs to the union of the $A_i$ vertices in the component $G_1$.) Thus, all differences globally on $G_1$ are bounded by the tightest possible bound at any one point in the component, so we can pass to the infimum. Hence (\ref{eq: basic quantitative uniqueness statement}) follows after dividing by $2$ since we can optimise $\lambda$ so the displacement is centred, rather than $\phi_0 = \phi_1$ on $G_0$.
\end{proof}
\begin{remark}
    The right-hand side of (\ref{eq: basic quantitative uniqueness statement}) can be bounded using Cauchy-Schwarz to give
    \begin{equation*}
        \inf_{\lambda \in \R} \| \phi_0 - \phi_1 - \lambda\|_{\infty} \leq \frac{1}{2} \min( \varepsilon_\X R_\Y, \varepsilon_\Y R_\X),
    \end{equation*}
    where $R_\X$ and $R_\Y$ are the smallest radii of balls centred at the origin containing $\supp \rho$ and $\supp \mu$ respectively, and $\varepsilon_\X$ is the largest pairwise distance attainable by partitioning $(A_i)$ into two components,
    \begin{equation*}
        \varepsilon_\X = \inf_{\aleph_1\sqcup \aleph_2 = \{ A_i\}_{i \in I}} d\Big(\bigcup_{A \in \aleph_1} A,\, \bigcup_{A \in \aleph_2}A\Big)
    \end{equation*}
    with $d$ denoting the closest distance between sets defined in Remark \ref{rmk: decomposition into lip path components}, and $\varepsilon_\Y$ is defined symmetrically.

    For example, suppose $\supp \rho$ has two Lipschitz-path connected components $A_0$ and $A_1$, then we have the following bound on the uniform norm diameter of the optimal set
    \begin{equation*}
        \sup_{\substack{\phi_0, \phi_1\\\text{optimal}}}\inf_{\lambda \in \R} \| \phi_0 - \phi_1 - \lambda\|_{\infty} \leq \frac{1}{2} d(A_0, A_1) \diam(\supp \mu).
    \end{equation*}
    Where here we replaced $R_\Y$ with $\diam \Y$ due to the translation invariance of quadratic optimal transport, as in Chapter \ref{ch: ac stability}.

    When there are more than two connected components of $G$, one should instead consider the graph of connected components of $G$ given the above distance quantity, and the diameter of the optimal set becomes less clear. One can see parallels between this calculation and those of Proposition \ref{prop: dual set characterisation}. A better understanding of this type of graph structure is the subject of ongoing work.
\end{remark}

\chapter{Conclusions}

The contributions of this work are:

\textbf{Chapter \ref{ch: intro}:} The observations and preliminary results on the stability of optimal transport plans.

\textbf{Chapter \ref{ch: ac stability}:} The extension of the strong convexity inequality to a larger class of measures using glueing, which implies stability of Wasserstein barycentres for a large class of measures.

\textbf{Chapter \ref{ch: discrete transport problem}:} Unifying known literature on the discrete primal and dual problems. Explicit characterisation of the set of dual optimisers for the discrete transport problem. (Some unfinished calculations have been omitted here, which seem to suggest this formulation could be very helpful for studying the quantitative stability of $\Phi$ and $\Psi$.)

\textbf{Chapter \ref{ch: pertubations}:} A qualitative understanding of how fully discrete transport plans behave under perturbation of source support points - quantitatively stable as long as uniqueness is preserved, at which case the optimal plans may have jump discontinuities corresponding to changing vertex on the polyhedron $\Pi(\alpha, \beta)$. This is the first step towards a resolution of Problem \ref{problem: general plan stability} under the discrete hypothesis on $\rho$.

\textbf{Chapter \ref{ch: uniqueness of dual problem}:} Significant weakening of source hypotheses needed for uniqueness of optimisers for the dual problem. Lipschitz-path connected spaces are a huge class of spaces, including any connected differentiable submanifold of $\R^d$. Formalised graph-based uniqueness criteria in the continuous setting. First calculations in the direction of Quantitative uniqueness results (bounds on the diameter of the optimal set).

The main takeaway of this report is that the connections between linear programming and optimal transport should not be overlooked. After a period of intense interaction in the 20th century, modern optimal transport has, in recent decades, moved closer to analysis and PDEs. As shown in Chapter \ref{ch: discrete transport problem}, the combinatorial characterisations of extreme points provide valuable intuition for the discrete transport problem, motivating the novel results of Chapters \ref{ch: pertubations} and \ref{ch: uniqueness of dual problem}. This demonstrates that linear programming and combinatorial perspectives remain relevant for advancing our theoretical understanding of optimal transport. Problems \ref{problem: general plan stability}, \ref{problem: discrete plan stability}, and \ref{problem: discrete potential stability} remain wide open, and I look forward to tackling them during the course of my PhD...

\bibliographystyle{plain}
\bibliography{ref2}

\appendix
\chapter{Appendix}

\begin{lemma}
\label{app: potential regularity}
    Assume $c$ satisfies Assumption \ref{assumption: cost hypotheses}, and $\Y$ is compact. Then given any $\psi \in C_b(\Y)$, $\psi^c$ defined as in \eqref{intro: c transf definition} is Locally Lipschitz on $\X$, and does not take the value $-\infty$ anywhere.
\end{lemma}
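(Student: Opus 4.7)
The plan is to verify each of the two claims directly from the definition of the $c$-transform \eqref{intro: c transf definition}, using Assumption \ref{assumption: cost hypotheses} component by component. First, for the non-$-\infty$ claim, I would combine the lower bound on $c$ with the boundedness of $\psi$: since $\psi \in C_b(\Y)$ there exists $M \geq 0$ with $|\psi(y)| \leq M$ uniformly, and by Assumption \ref{assumption: cost hypotheses}(i) we have $c \geq c_0$ for some $c_0 \in \R$. Therefore $c(x,y) - \psi(y) \geq c_0 - M$ for every $(x,y) \in \X \times \Y$, and taking the infimum over $y \in \Y$ yields $\psi^c(x) \geq c_0 - M > -\infty$ uniformly in $x$.

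For the local Lipschitz estimate, I would fix an arbitrary compact set $K \subset \X$ and apply Assumption \ref{assumption: cost hypotheses}(iii) with the compact set $\Y$ itself (this is where the compactness of $\Y$ enters), obtaining a single constant $L_K$ such that $x \mapsto c(x, y)$ is $L_K$-Lipschitz on $K$ for every $y \in \Y$. Then for any $x_1, x_2 \in K$ and any $y \in \Y$,
\begin{equation*}
c(x_1, y) - \psi(y) \leq c(x_2, y) - \psi(y) + L_K \|x_1 - x_2\|.
\end{equation*}
Taking the infimum over $y \in \Y$ on both sides gives $\psi^c(x_1) \leq \psi^c(x_2) + L_K \|x_1 - x_2\|$, and swapping the roles of $x_1$ and $x_2$ delivers $|\psi^c(x_1) - \psi^c(x_2)| \leq L_K \|x_1 - x_2\|$. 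Since $K$ was arbitrary, $\psi^c$ is locally Lipschitz on $\X$.

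There is no real obstacle here: both claims reduce to the standard observation that the pointwise infimum of a $y$-uniformly Lipschitz (respectively, uniformly bounded below) family of functions inherits those properties. The only subtlety is interpreting the phrase \emph{equilocally Lipschitz} in Assumption \ref{assumption: cost hypotheses}(iii) as supplying a single constant $L_K$ valid for the entire family $\{c(\cdot, y)\}_{y \in \Y}$ when restricted to a compact subset of $\X$; once this is in hand, the argument is completely mechanical.
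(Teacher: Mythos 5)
Your proof is correct and follows essentially the same route as the paper: both arguments reduce the local Lipschitz claim to the standard fact that a pointwise infimum of a $y$-uniformly Lipschitz family is Lipschitz with the same constant, using compactness of $\Y$ together with Assumption \ref{assumption: cost hypotheses}(iii) to furnish a single constant $L_K$ per compact $K \subset \X$. The only (minor) divergence is in how finiteness of $\psi^c$ is secured: you invoke the global lower bound on $c$ from Assumption \ref{assumption: cost hypotheses}(i) together with the bound $|\psi|\leq M$ to get a uniform lower bound $\psi^c \geq c_0 - M$, whereas the paper instead bounds $|c|$ on $K_\X \times \Y$ via continuity in $y$ on compact $\Y$ plus the equi-Lipschitz property in $x$, yielding local boundedness of $\psi^c$ from above and below; your route to the no-$(-\infty)$ claim is a shade more direct and does not need the diameter bookkeeping, but both are sound.
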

\begin{proof}
    Since $\Y$ is compact, the functions
    \begin{equation*}
        x \mapsto c(x, y)
    \end{equation*}
    are locally uniformly Lipschitz by assumption. To see this, Fix some $K_\X \subset \X$ compact, and some $x_0 \in K_\X$, then by continuity of $c$ in $y$, 
    \begin{equation*}
        y \mapsto c(x_0, y)
    \end{equation*}
    Attains its maximum and minimum over $\Y$ by compactness. Since all $x \mapsto c(x, y)$ share the same Lipschitz constant on $K_\X$, it follows that the  $x \mapsto c(x, y)$ are uniformly bounded on $K_\X$. Consequently,
    \begin{equation*}
        |\psi^c(x)| = |\inf_{y \in \Y} c(x, y) - \psi(y)| \leq  \|c(x_0, \cdot)\|_\infty + L_{K_\X}\diam \Y + \|\psi\|_\infty \text{ for all } x \in K_\X,
    \end{equation*}
    where $L_{K_\X}$ is the Lipschitz constant of the equicontinuous family $c( \cdot, y)$ on $K_\X$.
    It follows that $\psi^c$ is bounded on each compact set in $\X$, and $L_{K_\X}$ Lipschitz by classical envelope arguments as an infimum of uniformly Lipschitz functions, see \cite[Box 1.8]{santambrogio2015optimal}.
\end{proof}

\begin{lemma}
\label{app: support projection lemma}
    Let $\rho$ and $\mu$ be probabilities on $\R^d$, and let $\gamma\in \Pi(\rho, \mu)$ with $\supp \mu$ bounded. Then $p_X(\supp \gamma) = \supp \rho$, so that for each $x \in \supp \rho$ there exists $y$ with $(x, y) \in \supp \gamma$. 
\end{lemma}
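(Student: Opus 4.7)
The plan is to prove the equality by a double inclusion, with the nontrivial direction requiring the boundedness of $\supp \mu$ via a compactness/finite-covering argument.

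\textbf{Easy inclusion and closedness.} First I would observe that $p_X(\supp \gamma) \subset \supp \rho$ is immediate: if $x \in p_X(\supp \gamma)$ then any open neighbourhood $U$ of $x$ satisfies $\rho(U) = \gamma(U \times \R^d) \geq \gamma(U \times V) > 0$ for a suitable $V$ coming from the definition of the support of $\gamma$. Note also that $p_X(\supp \gamma)$ is actually closed: since $\mu$ is supported on the compact set $K := \overline{\supp \mu}$, we have $\supp \gamma \subset \R^d \times K$, and the projection $p_X : \R^d \times K \to \R^d$ is a closed map (this is the tube lemma / closed-map characterisation of compactness of the second factor).

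\textbf{Hard inclusion via contradiction.} For $\supp \rho \subset p_X(\supp \gamma)$, fix $x_0 \in \supp \rho$ and suppose for contradiction that $\{x_0\} \times \R^d$ does not meet $\supp \gamma$. Then for every $y \in K$, the point $(x_0, y)$ lies outside $\supp \gamma$, so there exists an open product neighbourhood $U_y \times V_y$ of $(x_0, y)$ with $\gamma(U_y \times V_y) = 0$. By compactness of $K$, extract a finite subcover $V_{y_1}, \dots, V_{y_n}$ of $K$ and set $U := \bigcap_{i=1}^n U_{y_i}$, which is an open neighbourhood of $x_0$. Then
\begin{equation*}
\rho(U) = \gamma(U \times \R^d) = \gamma(U \times K) \leq \sum_{i=1}^n \gamma(U \times V_{y_i}) \leq \sum_{i=1}^n \gamma(U_{y_i} \times V_{y_i}) = 0,
\end{equation*}
contradicting $x_0 \in \supp \rho$. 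Here the second equality uses that $\gamma$ is concentrated on $\R^d \times K$ because its second marginal $\mu$ is.

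\textbf{Main obstacle.} There is no real obstacle; the only subtle point is making sure the boundedness of $\supp \mu$ is used in an essential way, namely to pass from pointwise null neighbourhoods in the $y$ variable to a single null neighbourhood uniform in $y$, via the finite subcover. Without this hypothesis the statement genuinely fails (e.g. one can construct $\gamma$ with unbounded target so that a boundary point of $\supp \rho$ is not hit by any fibre of $\supp \gamma$), which is consistent with the fact that the enclosing proof of Theorem~\ref{thrm: uniqueness Lipschitz connected bounded} needed a separate treatment for the unbounded target case in Theorem~\ref{thrm: unbounded target uniqueness}.
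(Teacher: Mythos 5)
Your proof is correct. The paper takes a slightly different route for the nontrivial inclusion: it first observes that $p_X(\supp\gamma)$ has full $\rho$-measure, so that $\supp\rho \subset \overline{p_X(\supp\gamma)}$, and then proves $p_X(\supp\gamma)$ is closed by a sequential compactness argument (given $x_n \to x$ with $(x_n, y_n) \in \supp\gamma$, extract a convergent subsequence $y_n \to y$ in the compact set $\overline{\supp\mu}$ and conclude $(x,y) \in \supp\gamma$). You instead argue by contradiction with an open-cover/finite-subcover argument: if the fibre over $x_0$ misses $\supp\gamma$, you cover the compact target by finitely many null product neighbourhoods and intersect their $x$-sides to find a $\rho$-null neighbourhood of $x_0$. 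Both hinge on exactly the same fact — projections along compact fibres are closed — so there is no real difference in substance, just sequential versus covering compactness. One small note: the closedness of $p_X(\supp\gamma)$ via the tube lemma that you state in the first paragraph is not actually used in your main argument; combined with the full-measure observation it would give the paper's proof, but since you then do the direct contradiction argument, the tube-lemma remark is redundant in your write-up.
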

\begin{proof}
    One inclusion always holds without any hypotheses: Since $\supp \rho \times \R^d$ is a closed set of full measure, $\supp \gamma \subset \supp \rho \times \R^d$, and hence $p_X(\supp \gamma)  \subset \supp \rho$.
    The inverse inclusion does not always hold in general, as it could be that $p_X(\supp \gamma)$ is not closed. For example, $\X=\Y = \R$ and $\gamma$ supported on the graph of $\{ (x, 1/x) : x >0\}$, then $p_X(\supp \gamma) = (0, \infty)$, whereas the support of the $\X$ marginal is $[0, \infty)$.

    Assume $\supp \mu$ is bounded. Necessarily $p_X(\supp \gamma)$ has full $\rho$ mass, and so it holds that
    \begin{equation*}
        \supp \rho \subset \overline{p_X(\supp \gamma)}.
    \end{equation*}
    Thus, it suffices to show that $p_X(\supp \gamma)$ is closed. Take $x_n \in p_X(\supp \gamma)$, with $x_n \to x$ then there exist $y_n$ with $(x_n, y_n) \in \supp \gamma$, by compactness of $\supp \mu$ we can assume that $y_n$ converges to some $y$, and hence $(x, y) \in \supp \gamma$ by closedness of $\supp \gamma$. In other words, $x \in p_X(\supp \gamma)$, establishing the result.
\end{proof}

\begin{lemma}
\label{app: subplan is optimal}
If $\gamma \in \Pi(\rho, \mu)$ is an optimal transport plan for cost $c: \R^d \times \R^d \to \R_+$ integrable and $\pi\in \mathcal{M}^+(\R^d \times \R^d)$ such that $\pi \leq \gamma$, then
\begin{equation*}
    \hat\pi := \frac{\pi}{\pi(\R^d \times \R^d)}
\end{equation*}
is an optimal transport plan between its marginals $\hat\rho = p_{X\#}\hat\pi$ and $\hat\mu = p_{Y\#}\hat\pi$.
\end{lemma}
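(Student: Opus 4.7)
The plan is to argue by contradiction via a "swap" construction. Set $m := \pi(\R^d \times \R^d) \in (0,1]$, so that $\pi = m \hat\pi$, and denote by $\hat\rho, \hat\mu$ the marginals of $\hat\pi$. Suppose $\hat\pi$ were \emph{not} optimal between $\hat\rho$ and $\hat\mu$ for cost $c$. Then there exists $\hat\pi' \in \Pi(\hat\rho, \hat\mu)$ with
\begin{equation*}
    \int_{\R^d \times \R^d} c \, \di \hat\pi' < \int_{\R^d \times \R^d} c \, \di \hat\pi.
\end{equation*}
I would take the ``scaled competitor'' $\pi' := m \hat\pi'$, which by construction is a non-negative measure on $\R^d \times \R^d$ whose marginals coincide with those of $\pi = m\hat\pi$, namely $m\hat\rho$ and $m\hat\mu$.

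The key idea is then to replace the sub-plan $\pi$ inside $\gamma$ by $\pi'$: define
\begin{equation*}
    \gamma' := \gamma - \pi + \pi'.
\end{equation*}
First I would check that $\gamma' \in \Pi(\rho, \mu)$. Non-negativity follows from $\pi \leq \gamma$ (so $\gamma - \pi \geq 0$) together with $\pi' \geq 0$. The marginals of $\gamma'$ are those of $\gamma$ since $\pi$ and $\pi'$ share the same marginals $m\hat\rho$ and $m\hat\mu$, so the contributions cancel and $p_{X\#}\gamma' = \rho$, $p_{Y\#}\gamma' = \mu$. Then, by linearity of the cost integral,
\begin{equation*}
    \int c \, \di \gamma' = \int c \, \di \gamma - \int c \, \di \pi + \int c \, \di \pi' = \int c \, \di \gamma + m \left( \int c \, \di \hat\pi' - \int c \, \di \hat\pi \right) < \int c \, \di \gamma,
\end{equation*}
contradicting the optimality of $\gamma$.

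I do not foresee any significant obstacle. The only point requiring a small amount of care is the integrability needed to split $\int c \, \di \gamma' = \int c \, \di \gamma - \int c \, \di \pi + \int c \, \di \pi'$ without encountering an $\infty - \infty$ indeterminacy; this is ensured because $c \geq 0$ and $\pi \leq \gamma$ imply $\int c \, \di \pi \leq \int c \, \di \gamma < \infty$ (the latter by integrability of $c$ against any coupling with $\rho, \mu$), so all three integrals involved are finite.
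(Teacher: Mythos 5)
Your proposal is correct and is essentially the same argument as the paper's: both replace the sub-plan $\pi$ inside $\gamma$ by a rescaled cheaper competitor and use $\pi \le \gamma$ to ensure non-negativity of the modified plan, deriving a contradiction with optimality of $\gamma$. Your explicit remark about avoiding $\infty - \infty$ via $c \ge 0$ and $\pi \le \gamma$ is a small but welcome addition that the paper leaves implicit.
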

\begin{proof}
Assume $\hat\pi$ is not optimal, then there exists a transport plan $w$ with marginals $\hat\rho, \hat\mu$ such that $\int c \di w  < \int c \di \hat\pi$. Thus the transport plan between marginals $\rho$ and $\mu$ defined by
\begin{equation*}
    \hat\gamma = \hat\pi(\R^d \times \R^d)w + (\gamma - \pi)
\end{equation*}
has cost
\begin{align*}
    \int c(x, y) \di \hat\gamma =& \hat\pi(\R^d \times \R^d)\int c(x, y) \di w + \int c(x, y) \di (\gamma - \pi)\\
    <&\hat\pi(\R^d \times \R^d)\int c(x, y) \di \hat\pi + \int c(x, y) \di (\gamma - \pi)\\
    =&\int c(x, y) \di \gamma,
\end{align*}
contradicting the optimality of $\gamma$. (For the well definition of $\hat\gamma$ as a transport plan it was crucial $\pi \leq \gamma$ to ensure $\hat\gamma \geq 0$.)
\end{proof}

\end{document}